\newcommand{\version}{\today}
\theoremstyle{plain}
\newtheorem{thm}{THEOREM}[section]
\newtheorem{lm}[thm]{LEMMA}
\DeclareMathOperator{\dive}{div}
\theoremstyle{definition}
\newtheorem{defi}[thm]{DEFINITION}
\newtheorem{remark}[thm]{Remark}
\theoremstyle{remark}                       
\newcommand{\upchi}{\raise1pt\hbox{$\chi$}}
\newcommand{\R}{{\mathord{\mathbb R}}}
\newcommand{\C}{{\mathord{\mathbb C}}}
\newcommand{\Z}{{\mathord{\mathbb Z}}}
\newcommand{\N}{{\mathord{\mathbb N}}}
\newcommand{\tr}{{\rm Tr}}
\numberwithin{equation}{section}
\newcommand{\dd}{\:{\rm d}}
\renewcommand{\ln}{\log}
\newcommand{\eps}{\varepsilon}
\newcommand{\un}{{\rm 1\kern -2.5pt l}}
 \newcommand{\rb}{\rho_{2}}
\def\Tr{{\rm Tr}}
 \newcommand{\Dens}{{\mathfrak S}}
  \newcommand{\sder}{{\widecheck\partial}}
\newcommand{\ip}[1]{\langle {#1}\rangle}
\newcommand{\bip}[1]{\big\langle {#1}\big\rangle}
\newcommand{\Bip}[1]{\Big\langle {#1}\Big\rangle}
\begin{document}
\markboth{\scriptsize{CM \version}}{\scriptsize{CM September 2, 2016}}
\def\Z{{\mathbb Z}}
\def\R{{\mathbb R}}
\def\C{{\mathbb C}}
\def\sg{\sigma}
\def\S{\mathcal{S}}

\newcommand{\E}{{\mathcal E}}
\def\Ex{{\mathbb E}}


\renewcommand{\a}{\alpha}
\renewcommand{\b}{\beta}
\newcommand{\ga}{\gamma}
\newcommand{\Ga}{\Gamma}
\renewcommand{\d}{\delta}
\newcommand{\e}{\varepsilon}
\renewcommand{\l}{\lambda}
\newcommand{\om}{\omega}
\renewcommand{\O}{\Omega}
\newcommand{\Om}{\Omega}
\renewcommand{\th}{\theta}
\newcommand{\VV}{\mathbf{V}}
\newcommand{\UU}{\mathbf{U}}

\definecolor{jan}{rgb}{0.0,0.3,0.8}

\def\AAA{\color{blue}}   
\def\JJJ{\color{red}}
\def\MMM{\color{red}}
\def\EEE{\color{black}\normalsize}
\def\DDD{\color{magenta}\footnotesize}

\makeatletter
\DeclareRobustCommand\widecheck[1]{{\mathpalette\@widecheck{#1}}}
\def\@widecheck#1#2{%
    \setbox\z@\hbox{\m@th$#1#2$}%
    \setbox\tw@\hbox{\m@th$#1%
       \widehat{%
          \vrule\@width\z@\@height\ht\z@
          \vrule\@height\z@\@width\wd\z@}$}%
    \dp\tw@-\ht\z@
    \@tempdima\ht\z@ \advance\@tempdima2\ht\tw@ \divide\@tempdima\thr@@
    \setbox\tw@\hbox{%
       \raise\@tempdima\hbox{\scalebox{1}[-1]{\lower\@tempdima\box
\tw@}}}%
    {\ooalign{\box\tw@ \cr \box\z@}}}
\makeatother


\newcommand{\beq}{\begin{equation}}
\newcommand{\eeq}{\end{equation}}
\newcommand{\bal}{\begin{aligned}}
\newcommand{\eal}{\end{aligned}}
\newcommand{\ben}{\begin{enumerate}}
\newcommand{\beni} {\begin{enumerate}[(i)]}
\newcommand{\een}{\end{enumerate}}
\newcommand{\bit}{\begin{itemize}}
\newcommand{\eit}{\end{itemize}}
\newcommand{\beqw}{\begin{equation*}}
\newcommand{\eeqw}{\end{equation*}}
\newcommand{\bthm}{\begin{theorem}}
\newcommand{\ethm}{\end{theorem}}
\newcommand{\bpr}{\begin{proposition}}
\newcommand{\epr}{\end{proposition}}
\newcommand{\ble}{\begin{lemma}}
\newcommand{\ele}{\end{lemma}}
\newcommand{\blem}{\begin{lemma}}
\newcommand{\elem}{\end{lemma}}
\newcommand{\bpf}{\begin{proof}}
\newcommand{\epf}{\end{proof}}
\newcommand{\bex}{\begin{example}}
\newcommand{\eex}{\end{example}}
\newcommand{\bre}{\begin{example}}
\newcommand{\ere}{\end{example}}
\newcommand{\bma}{\begin{bmatrix}}
\newcommand{\ema}{\end{bmatrix}}
\newcommand\T{{\mathbb T}}

\newcommand{\ddt}{\frac{\mathrm{d}}{\mathrm{d}t}}
\newcommand{\ddtr}{\frac{\mathrm{d}^+}{\mathrm{d}t}}
\newcommand{\ddhr}{\frac{\mathrm{d}^+}{\mathrm{d}h}}
\newcommand{\ddtt}{\frac{\mathrm{d^2}}{\mathrm{d}t^2}}
\newcommand{\ddr}{\frac{\mathrm{d}}{\mathrm{d}r}}


\renewcommand{\aa}{{\boldsymbol\alpha}}
\newcommand{\Dom}{{\mathsf D}}
\newcommand{\wt}{\widetilde}
\newcommand{\one}{{{\bf 1}}}
\newcommand{\embed}{\hookrightarrow}
\newcommand{\Null}{\mathsf{N}}
\newcommand{\rank}{\mathrm{rank}\;}
\renewcommand{\Re}{{\rm{Re}}\;}
\renewcommand{\Im}{{\rm{Im}}\;}
\newcommand{\sgn}{{\rm sgn}}
\renewcommand{\hat}{\widehat}
\newcommand{\ot}{\otimes}
\newcommand{\hN}{\widehat\Num}
\newcommand{\G}{\Gamma}
\renewcommand{\phi}{\varphi}
\def\bh{{\bf h}}
\def\tand{\quad{\rm and}\quad}
\def\L{\mathcal{L}}\def\bk{\rangle\langle}
\newcommand{\cA}{{\mathord{\mathscr A}}}
\def\ib{\iota_\beta}
\def\H{\mathcal{H}}
\def\E{\mathcal{E}}
\def\Eb{\mathcal{E}_\beta}
\def\rb{\sigma_\beta}
\newcommand{\bx}{{\boldsymbol{x}}}
\newcommand{\cM}{\mathcal{M}}
\newcommand{\aA}{\mathcal{A}}
\newcommand{\aB}{\mathcal{B}}
\newcommand{\cG}{\mathcal{G}}
\newcommand{\cP}{{\mathord{\mathscr P}}}
\newcommand{\cQ}{{\mathord{\mathscr Q}}}
\newcommand{\cE}{{\mathcal{E}}}
\newcommand{\cL}{{\mathord{\mathscr L}}}
\newcommand{\cK}{{\mathord{\mathscr K}}}
\newcommand{\cB}{{\mathord{\mathscr B}}}
\newcommand{\cJ}{{\mathcal{J}}}
\newcommand{\cC}{{\mathcal{C}}}

\newcommand{\B}{{\mathcal{B}}}
\newcommand{\calS}{{\mathcal{S}}}
\newcommand{\F}{{\mathcal{F}}}
\newcommand{\fH}{{\mathfrak{H}}}
\newcommand{\bbA}{{\bf A}}
\newcommand{\Cln}{\mathfrak{C}^n}  
\newcommand{\err}{{r}}

\newcommand{\PX}{\cP(\cX)}
\newcommand{\PXs}{\cP_*(\cX)}
\newcommand{\hrhom}{\widehat{\rho_{-j}}}
\newcommand{\hrhop}{\widehat{\rho_{+j}}}
\newcommand{\hrhopm}{\widehat{\rho_{\pm j}}}
\newcommand{\hrhomp}{\widehat{\rho_{\mp j}}}
\newcommand{\lrho}{\check{\rho}}
\newcommand{\ulrho}{\breve{\rho}}
\newcommand{\ulV}{\breve{V}}
\newcommand*{\bdot}[1]{%
  \accentset{\mbox{\large\bfseries .}}{#1}}

\title
[Gradient flow for quantum Markov semigroups]
{Gradient flow and entropy inequalities for quantum Markov semigroups with detailed balance}

\author{Eric A. Carlen}
\address{Department of Mathematics\\ 
Hill Center\\
Rutgers University\\
110 Frelinghuysen Road\\
Piscataway\\
NJ 08854-8019\\
USA}
\email{carlen@math.rutgers.edu}

\author{Jan Maas}
\address{
Institute of Science and Technology Austria (IST Austria)\\
Am Campus 1\\ 
3400 \newline Klos\-ter\-neu\-burg\\ 
Austria}
\email{jan.maas@ist.ac.at}

\begin{abstract}
We study a class of ergodic quantum Markov semigroups on finite-dimensional unital $C^*$-algebras. These semigroups have a unique stationary state $\sigma$, and we are concerned with those that satisfy a quantum detailed balance condition with respect to $\sigma$. 
We show that the evolution on the set of states that is given by such a quantum Markov semigroup is gradient flow for the relative entropy with respect to $\sigma$ in a particular Riemannian metric on the set of states. 
This metric is a non-commutative analog of the $2$-Wasserstein metric, and in several interesting cases we are able to show, in analogy with work of Otto on gradient flows with respect to the classical $2$-Wasserstein metric, that the relative entropy is strictly and uniformly convex with respect to the Riemannian metric introduced here. 
As a consequence, we obtain a number of new inequalities for the decay of relative entropy for ergodic quantum Markov semigroups with detailed balance. 
\end{abstract}

\maketitle

\medskip
\centerline{Keywords: quantum Markov semigroup, entropy, detailed balance, gradient flow.}

\centerline{Subject Classification Numbers: 46L57, 81S22, 34D05, 47C90}


\maketitle

\section{introduction}  Let $\aA$ be a finite-dimensional $C^*$-algebra with unit $\one$. 
We may identify $\aA$  with a $C^*$-subalgebra of $\cM_n(\C)$, the $C^*$-algebra of 
$n \times n$ matrices, for some $n$.   
In finite dimension, there is no difference between weak and norm closure, and so 
$\aA$ is also a von Neumann algebra.  
A {\em Quantum Markov Semigroup} (QMS) is a continuous one-parameter semigroup of 
linear transformations $(\cP_t)_{t \geq 0}$ on $\aA$ such that for each $t\ge 0$, 
$\cP_t$ is completely positive and $\cP_t\one = \one$. 
Associated to any QMS $\cP_t = e^{t\cL}$, is the dual semigroup $\cP^\dagger_t$ 
acting on $\Dens_+(\aA)$, the set of faithful states of $\aA$.  
(When there is no ambiguity, we simply write $\Dens_+$.)
The QMS $\cP_t$ is {\em ergodic} in case $\one$ spans the eigenspace of $\cP_t$ for the eigenvalue $1$. 
In that case, there is a unique invariant state $\sigma$.
While $\sigma$ need not be faithful, a natural projection operation allows us to assume, 
effectively without loss of generality, that $\sigma\in \Dens_+(\aA)$. Characterizations of the 
generators of quantum Markov semigroups on the $C^*$-algebra $\cM_n(\C)$ 
of all $n\times n$ matrices were given at the same time by Gorini, Kossakowski and Sudershan 
\cite{GKS76}, and by Lindblad \cite{Lin76} in a more general setting (but still assuming 
norm continuity of the semigroup). Such semigroups are often called Lindblad semigroups. 

The notion of {\em detailed balance} in the theory of classical Markov processes has 
several different quantum counterparts, as discussed below.
One of these is singled out here, with a full discussion of how it relates to other 
variants and why it is physically natural.
Suffice it to say here that, as we shall see, the class of ergodic QMS that satisfy the 
detailed balance condition includes a wide variety of examples arising in physics.

The set of faithful states $\Dens_+(\aA)$ may be identified with the set of invertible density matrices 
$\sigma$ on $\C^n$ that belong to $\aA$, as is recalled below. 
For $\rho,\sigma \in \Dens_+(\aA)$, the {\em relative entropy of $\rho$ with respect to $\sigma$} is the functional
\begin{equation}\label{relent}
	D(\rho \| \sigma) = \tr[\rho(\log \rho - \log \sigma)]\ .
\end{equation}

We show in Theorem~\ref{gradflowentgen} that associated to any QMS 
$\cP_t = e^{t\cL}$ satisfying detailed balance, there is a Riemannian metric $g_\cL$ on 
$\Dens_+$ such that the flow on 
$\Dens_+$ induced by the dual semigroup $\cP_t^\dagger$ is 
{\em gradient flow} for the metric $g_{\cL}$ of the relative entropy 
$D(\cdot \|\sigma)$ with respect to the invariant state $\sigma\in \Dens_+$.  
In several cases, we shall show that the relative entropy functional is geodesically convex for 
$g_\L$, and as a consequence, we shall deduce a number of functional 
inequalities that are useful for studying the evolution governed by $\cP_t$. 
In particular, we shall deduce several sharp relative entropy dissipation inequalities. 
Some of these are new; see e.g. Theorem~\ref{BoseEnt} and Theorem~\ref{FermiEnt}. 

The Riemannian distance corresponding to $g_\cL$ will be seen to be a very natural analog of the 
$2$-Wasserstein distance on the space of probability densities on $\R^n$ \cite[Chapter 6]{V}. 
Otto showed \cite{O01} that a large number of classical evolution equations could be viewed as gradient flow 
in the $2$-Wasserstein metric for certain functionals, and that when the functionals were geodesically uniformly convex for this geometry, a host of useful functional inequalities were consequently valid. 
This is for instance the case when the functional driving the flow is classical relative entropy with respect to a Gaussian reference measure.
In this case, the flow is given by the classical Ornstein-Uhlenbeck semigroup, described by the Fokker-Planck equation with linear drift. 
One of the inequalities that is a consequence of the uniform geodesic convexity of the relative entropy is the sharp bound on entropy dissipation for solutions of  the Fokker-Planck equation. 

The present paper also greatly extends our previous paper \cite{CM13} in which we obtained a gradient flow structure for the Fermi Ornstein-Uhlenbeck semigroup. 
In particular, we prove a sharp geodesic convexity result for the von Neumann entropy in this setting, thereby solving a problem that was left open in \cite{CM13}. Thus, our results can be viewed as a non-commutative extension of Otto's investigation of classical gradient flows. 

Mielke \cite{Mie13QM,Mie15QM} investigated related variational formulations for dissipative quantum systems based on \"Ottinger's so-called GENERIC framework \cite{Oet,Oet10}. 
However, the gradient flow structure considered in Mielke's papers is in general different from the one introduced in the present paper, as the approach in \cite{Mie13QM,Mie15QM} gives rise to nonlinear evolution equations that are different from the linear Lindblad equations that we obtain here. Also, Junge and Zeng \cite{JuZe15} have recently developed an approach to some non-commutative functional inequalities involving a non-commutative analog of the $1$-Wasserstein metric.

We restrict our attention to the case that $\aA$ is a finite-dimensional $C^*$-algebra because this setting already includes many examples of physical interest, and we wish to explain our methods in a way that does not encumber them with the host of technical and topological difficulties that would follow in an infinite-dimensional setting.
For example, in an infinite-dimensional setting, it would matter that $\aA$ is a von Neumann algebra and not only a $C^*$-algebra with a unit, so that $\cA$ would have a pre-dual, and then what we call $\Dens_+(\aA)$ here would be the set of normal states; 
i.e., states that are continuous with respect to the weak-$*$ topology on $\aA$, and we would require that $\cP_t$ be normal for each $t$; 
i.e., continuous with respect to the weak-$*$ topology on $\aA$. 
The innocent formula $\cP_t = e^{t\cL}$ would require closer scrutiny, and so forth. 
Many of the issues involved in extending our results to a more general infinite-dimensional setting are standard but not all of them. 
This will be done elsewhere.
We also refer to a forthcoming paper for a  treatment of more general transport metrics, general entropy functionals, and their geodesic convexity properties \cite{CM16}.

Here instead, we present a development of our methods that 
will be readily understood by people who are interested in the many physically interesting finite-dimensional examples.  
We shall also show that our methods may be applied in the infinite-dimensional setting to particular examples without first generalizing the whole theory. 
We illustrate this with the family of Bose Ornstein-Uhlenbeck semigroups, which are always infinite-dimensional since all non-trivial representations of the Canonical Commutation Relations are infinite-dimensional. 
We use our methods to prove a new sharp relative entropy dissipation inequality for this family that had recently been conjectured in \cite{HKV16}; see Theorem~\ref{BoseEnt}. We also prove some other sharp inequalities of this type; e.g., Theorem~\ref{FermiEnt}. 

While this introduction has, in the interest of brevity, used a considerable amount of terminology without explanation, the rest of the paper is elementary and quite self-contained. 
We do assume a basic familiarity with $C^*$-algebras, von Neumann algebras and completely positive maps. 
Many readers will have this familiarity, but for background on $C^*$ and von Neumann algebras, we refer to Sakai \cite{Saka71}, and for completely positive maps to Paulsen \cite{Paul02}. 
Terminology and facts that are used here without explanation can be found in these references. 

We close the introduction with some preliminary definitions and by establishing some notation that will be in use throughout the paper.

\subsection{Preliminary definitions and notation}

$\aA$ will always denote a finite-dimensional $C^*$-algebra with unit $\one$, regarded as subalgebra of some matrix algebra $\cM_n(\C)$. 
The center of $\aA$ is generated by a single self-adjoint element $Z\in \aA$, and its spectral projections yield 
a decomposition of $\aA$ into a finite direct sum of {\em factors}:
Let $\{\lambda_1, \dots,\lambda_p\}$ for some $1 \leq p \leq n$ be the distinct eigenvalues of $Z$, and let $\H_j$ be the eigenspace with eigenvalue $\lambda_j$. 
Then each $\H_j$ is invariant under $\aA$, and letting $\aA_j$ denote the restriction of $\aA$ to $\H_j$, $\aA_j$ has a trivial center; that is, $\aA_j$ is a factor. 

The structure of finite-dimensional factors is well-known: $\H_j$ is unitarily equivalent to $\C^{\ell_j}\otimes \C^{r_j}$, and there is a unitary equivalence in which each $A\in \aA_j$ takes the form $I_{\ell_j}\otimes \widehat{A}$ where $\widehat{A} \in \cM_{r_j}(\C)$. 
That is, each factor may be identified in a natural way with a full matrix algebra. 
However, not all finite-dimensional $C^*$-algebras arising in mathematical physics are factors. 
For example, a Clifford algebra with an odd number $k$ of generators, arising in the description of a system of $k$ Fermi degrees of freedom, is not a factor. 
This example will be discussed in detail later on.

Let $\tau$ denote the normalized trace on $A$ given by $\tau(A) = \tr[A]/\tr[\one]$ for all $A\in \aA$. 
Then $\tau$ is a faithful state on $\aA$. 
Let $\fH_\aA$ denote the Hilbert space formed by equipping $\aA$ with the \emph{GNS inner product} determined by $\tau$. 
That is, for all $A,B\in \aA$,
\begin{equation*}
\langle A,B\rangle_{\fH_{\aA}} = \tau[A^*B]\ .
\end{equation*}
We could use this inner product to identify $\fH_\aA$ with the space of linear functionals on $\aA$, but to keep contact with the physics literature, we do something slightly different: 
We use the {\em un-normalized trace} to write the general linear functional $\phi$ on $\aA$ in the form
\begin{equation*}
\phi(B) = \tr[AB]
\end{equation*} 
for some $A\in \aA$. 
It is easy to see that $\phi$ is a positive linear functional (in the sense that $\phi(B) \geq 0$ whenever $B \geq 0$) if and only if $A\geq 0$ in $\aA$.  
It follows that the set of faithful states on $\aA$ may be identified with the set of 
strictly positive elements $A$ of $\aA$ such that $\tr[A] =1$.  
Since $A\in \aA$ is strictly positive in $\aA$ if and only if it is strictly positive in $\cM_n(\C)$, we may identify the set of faithful states on $\aA$ with the set of invertible density matrices $\rho$ on $\cM_n(\C)$ {\em that belong to the subalgebra $\aA$}. 
In the following, we always write $\Dens_+$ to denote this set, whether we think of it as a set of faithful states or as a set of density matrices. 

\begin{defi}[Modular operator and modular group]\label{modu} Let $\sigma\in \Dens_+$. 
Define a linear operator $\Delta_\sigma$ on $\fH_{\aA}$, or, what is the same thing, on $\aA$, by
\begin{equation*}
\Delta_\sigma(A) = \sigma A \sigma^{-1}
\end{equation*}
for all $A\in \aA$. $\Delta_\sigma$ is called the {\em modular operator}.  The {\em modular generator} is the self-adjoint element $h \in \aA$ given by
\begin{equation*}
h = -\log \sigma\ .
\end{equation*}
The  {\em modular automorphism group} $\alpha_t$ on $\aA$ is the group defined by
\begin{equation}\label{modula}
\alpha_t(A) = e^{it h} A e^{-it h}
\end{equation}
\JJJ for $t \in \C$. \EEE Note that $\Delta_\sigma = \alpha_i$. 
\end{defi}

The modular operator and the modular automorphism group are central to the characterization of an important class of quantum Markov semigroups. 

First, Davies \cite{Da74} identified a class of quantum Markov semigroups that he rigorously showed to arise from coupling a finite quantum system with internal Hamiltonian $h$ to an infinite fermion heat bath and then taking a weak coupling limit. 
The whole class of quantum Markov semigroups he found has the property that the semigroup commutes with the modular automorphism group $\alpha_t$ given by (\ref{modula}) where $h$ is the internal Hamiltonian, and then $\sigma = e^{-h}/ \tr[e^{-h}]$. 
(Note that adding a scalar constant to $h$ has no effect on (\ref{modula}).) 
Of course an operator on $\fH_\aA$ commutes with the modular group if and only if it commutes with the modular operator. 
  
Second, Alicki \cite{A78} showed that  commutativity with respect to the modular operator is central to one natural extension of the notion of {\em detailed balance} from classical Markov chains to the quantum setting. 
Before explaining this fact, which is important for our work here, we first introduce some more terminology and notation. 

A linear operator $\cK$ on $\aA$ is {\em positivity preserving} in case $\cK A \geq 0$ whenever $A\geq 0$.
A linear operator $\cK$ on $\aA$ is {\em self-adjointness preserving} in case $(\cK A)^* = \cK A^*$, or, equivalently, 
in case $\cK A$ is self-adjoint whenever $A$ is self-adjoint. 
Evidently, any positivity preserving operator is self-adjointness preserving. 
When $\cP_t = e^{t\cL}$ is a self-adjointness preserving semigroup, then its generator $\cL = 
\lim_{t\to 0}t^{-1}(\cP_t - I)$ is self-adjointness preserving as well.

Let $\sigma \in \Dens_+$ and note that  $(\Delta_\sigma A)^* = \Delta_{\sigma}^{-1}(A^*)$ for all $A\in \aA$. Moreover,  for all $A,B\in \aA$,
$$\tr[A^* \Delta_\sigma B] = \tr[(\Delta_\sigma A)^* B] \tand \tr[A^*\Delta_\sigma A] = \tr [|\sigma^{1/2}A\sigma^{-1/2}|^2] \ ,$$
so that $\Delta_\sigma$ is a positive operator on  $\fH_{\aA}$. 
A dagger $\dagger$ will be used to denote the adjoint with respect to the inner product in $\fH_{\aA}$, or, 
what is the same, with respect to the Hilbert-Schmidt inner product. 
We will encounter many other inner products on $\aA$, but the GNS inner product associated to $\tau$ has a special role. 
We may then rewrite one of the conclusions from just above as $\Delta_{\sigma}^\dagger = \Delta_\sigma$. 

Because of the self-adjointness of $\Delta_\sigma$
there is an orthonormal basis $\{E_1,\dots, E_m\}$ of $\fH_\aA$ consisting of eigenvectors of $\Delta_\sigma$. 
Since $\Delta_\sigma \one = \one$, we may always assume that $E_1 = \one$. 
In this case, $\tr[E_\gamma] =\tau(E_\gamma) = 0$ for all $\gamma > 1$. 
Furthermore, since $\Delta_\sigma$ is strictly positive, all  eigenvalues of $\Delta_\sigma$ are strictly positive, hence we may write them in the form $e^{-\omega_\gamma}$. 
Since $(\Delta_\sigma A)^* = \Delta_\sigma^{-1}A^*$, it follows that for all $E \in \fH_\aA$,
\begin{equation}\label{conjeig}
\Delta_\sigma E = e^{-\omega}E \quad \iff \quad \Delta_\sigma E^* = e^{\omega}E^*\ .
\end{equation}
In particular, if for $\omega \neq 0$, $e^{-\omega}$ is an eigenvalue of $\Delta_\sigma$, then so is $e^{\omega}$, and the two eigenspaces are orthogonal and have the same dimension. 
It follows that there exists an orthonormal basis of $\fH_\aA$ with the properties listed in the next definition:

\begin{defi}[Modular basis]\label{modbadef}  
Let $\aA$ be a finite-dimensional $C^*$-algebra and let $\sigma\in \Dens_+(\aA)$.
Then there exists an orthonormal basis $\{E_1,\dots, E_m\}$ of $\fH_\aA$ with the following properties:

\smallskip
\noindent{\it (i)} $\{E_1,\dots, E_m\}$ consists of eigenvectors of $\Delta_\sigma$.

\smallskip
\noindent{\it (ii)} $E_1 = \one$.

\smallskip
\noindent{\it (iiI)} $\{E_1,\dots, E_m\} = \{E_1^*,\dots, E_m^*\}$

\end{defi}

\section{Detailed balance}

There is a large literature on the {\em detailed balance condition} in a quantum setting, starting with the work of Agarwal \cite{Ar73}, which initiated a number of investigations in the 1970's including
\cite{A78, CW76,KFGV,SpLe77}. Already in these papers, one finds several different, and non-equivalent,  definitions. A number of more recent investigations \cite{AFQ16, FR15, FU10,  MS98, TKRWV}  have added to the variety
of meanings attached to this term. We therefore carefully explain the context of the definition that we use here, and why it is the most natural for our purposes. 

Let $P_{i,j}$ be the Markov transition matrix for a Markov chain on a finite state space 
$S$ with elements $\{x_1,\dots,x_n\}$.
Suppose that $\sigma$ is a probability density on $S$ that is invariant under this transition function: 
$\sigma_j = \sum_{i=1}^n\sigma_i P_{i,j}$ for all $i$.  The transition matrix satisfies the {\em detailed balance condition with respect to $\sigma$} in case
\begin{equation}\label{clasde1}
\sigma_i P_{i,j} =    \sigma_j P_{j,i}  \qquad {\rm for\ all}\ i,j\ .
\end{equation}
Let $X_n$ be the Markov process started from the initial distribution $\sigma$, so that  the process is stationary.  
Let ${\rm Pr}$ be measure on the path space of the process. Then (\ref{clasde1}) is equivalent to
\begin{equation*}
{\rm Pr}\{X_n =i,X_{n+1}= j\} = {\rm Pr}\{X_n =j,X_{n+1} = i\}   \qquad {\rm for\ all}\ i,j\ {\rm and \ all}\ n\ .
\end{equation*}
In other words, $(X_n, X_{n+1})$ has the same joint distribution as $(X_{n+1}, X_n)$, so that (\ref{clasde1}) characterizes {\em time reversal invariance}. 
There is another characterization of (\ref{clasde1}) in terms of self-adjointness:  The matrix $P$ is self-adjoint on $\C^n$ equipped with the inner product 
\begin{equation}\label{classical}
\langle f,g\rangle_\sigma = \sum_{k=1}^n \sigma_k \overline{ f_k} g_k \ ,
\end{equation}
if and only if (\ref{clasde1}) is satisfied.

There are a number of different ways one might generalize the inner product (\ref{classical}) to 
the quantum setting, and these give different notions of self-adjointness.
Let $\cP_t$ be a QMS on $\aA$. 
A state $\sigma\in \Dens_+(\aA)$ is {\em invariant under $\cP_t^\dagger$} in case  $\cP_t^\dagger \sigma = \sigma$ for all $t \geq 0$, or equivalently, $\cL^\dagger \sigma = 0$.
These conditions are equivalent to $\tr[\sigma \cP_t(A)] = \tr[\sigma A]$ for all $t>0$ and all $A\in \aA$, or equivalently, $\tr[\sigma \cL(A)] = 0$ for all $A\in \aA$. 

\begin{defi}[Compatible inner product]   
An inner product $\langle \cdot, \cdot \rangle$ is {\em compatible} with $\sigma\in \Dens_+(\aA)$ 
in case for all $A\in \aA$, 
$\tr[\sigma A]  = \langle \one, A\rangle$.
\end{defi}

If a quantum Markov semigroup $\cP_t$ is self-adjoint with respect to an inner product $\langle \cdot,\cdot\rangle$ that is compatible with $\sigma\in \Dens_+$, then for all $A\in \aA$, 
$$\tr[\sigma A] = \langle \one ,A\rangle =  \langle \cP_t \one,A\rangle =\langle \one ,\cP_t A\rangle = \tr[\sigma\cP_tA]\ ,$$
and thus $\sigma$ is invariant under $\cP_t^\dagger$.

\begin{defi}
Let $\sigma \in \Dens_+$ be a non-degenerate density matrix. 
For each $s\in \R$, and each $A,B\in \aA$, define
\begin{equation}\label{detailB}
 \langle A,B\rangle_s  
  =  \tr[(\sigma^{(1-s)/2} A\sigma^{s/2})^* (\sigma^{(1-s)/2} B \sigma^{s/2}) ]  = \tr[ \sigma^s A^* \sigma^{1-s} B]\ .
\end{equation} 
\end{defi}

Evidently each of these inner products is compatible with $\sigma$. 
At $s=1$, this is the \emph{GNS inner product} associated to the state $\varphi(A) = \tr[\sigma A]$. 
The value $s=1/2$ is also special; 
$\langle \cdot,\cdot\rangle_{1/2}$ is called the \emph{KMS inner product}. A number of its properties are developed in \cite{Pe84}.

The inner products in (\ref{detailB}) can be written as 
\begin{equation}\label{detailBB}
\langle A,B\rangle_s = \tr[ A^* \Delta_{\sigma}^{1-s}B\sigma]\ .
\end{equation}
More generally, given any  function $f:(0,\infty) \to (0,\infty) $, define
\begin{equation}\label{detailB3}
\langle A,B\rangle_f = \tr[ A^* f(\Delta_{\sigma})B \sigma]\ .
\end{equation}
Note that $\langle \cdot,\cdot\rangle_1$ is the $\sigma$-GNS inner product whether $1$ is interpreted as a number, as in \eqref{detailBB}, or as the constant function $f(t) = 1$, as in \eqref{detailB3}.

Let $R_A$ denote right multiplication by $A$, and define $\Omega_\sigma^f = R_\sigma \circ  f(\Delta_\sigma)$.
Then another way to write (\ref{detailB3}) is $\langle A,B\rangle_f  = \tr[A^*\Omega_\sigma^f B]$.  
For all linear operators $\cK$ on $\aA$, 
\begin{align*}
\langle A, \cK B\rangle_f = 
 \langle [\Omega_\sigma^f]^{-1} \cK^\dagger(\Omega_\sigma^f A), B\rangle_f \ .
\end{align*}
It follows immediately that $\cK$ is self-adjoint with respect to the inner product $\langle \cdot,\cdot \rangle_f$  if and only if 
\begin{align*}
\Omega_\sigma^f \circ \cK = \cK^\dagger \circ \Omega_\sigma^f\ .
\end{align*}

\begin{remark}
For $f(t) = t^s$, 
the adjoint $\cK'$  of $\cK$ with respect to the inner product $\langle \cdot,\cdot\rangle_s$ is
$$\cK' B =  \sigma^{s-1} \cK^\dagger  (\sigma^{1-s} B \sigma^s)\sigma^{-s}\ .$$
For $s=1/2$, this reduces to $\cK' B = \sigma^{-1/2} \cK^\dagger  (\sigma^{1/2} B \sigma^{1/2})\sigma^{-1/2}$. Since
$\cK^\dagger$ is positivity preserving if and only if $\cK$ is positivity preserving, it is evident that for $s=1/2$, 
$\cK'$ is positivity preserving if and only if $\cK$ is positivity preserving. However, for other values of $s$, 
$\cK'$ need not be positivity preserving when $\cK$ is. This is one feature that sets the KMS inner product 
apart from the inner products
$\langle \cdot, \cdot \rangle_s$ for other values of $s$. 
\end{remark}

In \cite[Definition 16]{TKRWV}, detailed balance is defined in terms of self-adjointness with respect to 
$\langle \cdot, \cdot \rangle_f$, yielding a number of {\it a priori} different notions of detailed balance 
depending on the choice of $f$.   
The example following Proposition 18 in \cite{TKRWV} shows that different choices of $f$ can yield distinct classes of self-adjoint operators, and hence distinct notions of detailed balance.   
Specifically, it is shown in \cite{TKRWV} that self-adjointness with respect to $\langle \cdot, \cdot\rangle_{1/2}$ is not the same as self-adjointness with respect to $\langle \cdot, \cdot \rangle_f$ where $f(t) = (1+t)/2$, corresponding to the 
{\em Bures metric}, as discussed in \cite{TKRWV}. 
The authors of \cite{TKRWV} conclude: ``The family of quantum detailed balance conditions is therefore much richer than the classical counterpart''.  

However, it turns out that self-adjointness with respect to the GNS inner product, or indeed with respect to $\langle \cdot,\cdot\rangle_{s}$ for any $s\neq 1/2$ implies self-adjointness with respect to $\langle \cdot,\cdot\rangle_{f}$ for all $f$. 
The argument leading to this conclusion, for which we have found no reference, is simple and will prove useful.

The following is a simple variant on the well-known KMS symmetry condition:

\begin{lm}\label{lem:alpha-id} For $s\in \R$, let $\langle \cdot, \cdot \rangle_s$ be the inner product defined in (\ref{detailB}). 
Then for all $t\in \R$ and all $A,B\in \aA$, 
\begin{equation}\label{modulaC}
\langle \alpha_{it}(A), B\rangle_s = \langle A,B\rangle_{s-t} = \langle A, \alpha_{it}(B)\rangle_s \ .
\end{equation}
In particular, $\alpha_{it}$ is self-adjoint with respect to $\langle \cdot, \cdot \rangle_s$.
\end{lm}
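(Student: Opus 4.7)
The plan is to reduce everything to the identity $\alpha_{it}(A) = \sigma^t A \sigma^{-t}$ and then use cyclicity of the trace. From Definition~\ref{modu}, since $h = -\log\sigma$ we have $e^{ith} = \sigma^{-t}$ (and $e^{-ith} = \sigma^{t}$) when $t$ is replaced by $it$, so
\begin{equation*}
  \alpha_{it}(A) \;=\; e^{i(it)h}\,A\,e^{-i(it)h} \;=\; e^{-th}\,A\,e^{th} \;=\; \sigma^{t}\,A\,\sigma^{-t}.
\end{equation*}

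First I would substitute this into the left-hand side of \eqref{modulaC}. Using the second form of the inner product in \eqref{detailB},
\begin{equation*}
  \langle \alpha_{it}(A), B\rangle_s \;=\; \tr\!\bigl[\sigma^{s}\,(\sigma^{t} A \sigma^{-t})^{*}\,\sigma^{1-s}\,B\bigr]
  \;=\; \tr\!\bigl[\sigma^{s}\,\sigma^{-t}\,A^{*}\,\sigma^{t}\,\sigma^{1-s}\,B\bigr],
\end{equation*}
since $\sigma^{\pm t}$ is self-adjoint. Combining powers of $\sigma$ gives $\tr[\sigma^{s-t}\,A^{*}\,\sigma^{1-(s-t)}\,B] = \langle A, B\rangle_{s-t}$, which is the first equality. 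For the second equality I would do the symmetric computation:
\begin{equation*}
  \langle A, \alpha_{it}(B)\rangle_s \;=\; \tr\!\bigl[\sigma^{s}\,A^{*}\,\sigma^{1-s}\,\sigma^{t}\,B\,\sigma^{-t}\bigr],
\end{equation*}
then cycle the trailing $\sigma^{-t}$ around to the front to obtain $\tr[\sigma^{s-t}\,A^{*}\,\sigma^{1-s+t}\,B] = \langle A, B\rangle_{s-t}$.

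Once both equalities are in hand, the self-adjointness claim is immediate: $\langle \alpha_{it}(A), B\rangle_s = \langle A, B\rangle_{s-t} = \langle A, \alpha_{it}(B)\rangle_s$. There is no real obstacle here; the only minor point to keep track of is the sign convention, namely that the analytic continuation $t \mapsto it$ in \eqref{modula} turns $e^{ith}$ into $\sigma^{-t}$ rather than $\sigma^{t}$, so that the net shift in the parameter $s$ is $-t$ (consistent with the special case $\alpha_{i} = \Delta_\sigma$ mapping $\langle\cdot,\cdot\rangle_s$ to $\langle\cdot,\cdot\rangle_{s-1}$, and in particular moving between the GNS ($s=1$) and the reversed GNS ($s=0$) inner products).
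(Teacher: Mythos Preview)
Your proof is correct and follows essentially the same route as the paper's own proof: substitute $\alpha_{it}(A) = \sigma^t A \sigma^{-t}$ into the trace formula for $\langle\cdot,\cdot\rangle_s$, combine powers of $\sigma$, and read off $\langle A,B\rangle_{s-t}$. The paper's version is simply more terse, compressing both equalities into a single display and omitting the explicit sign-convention remark.
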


\begin{proof} 
Using the definitions we obtain
\begin{align*}
\langle \alpha_{it}(A), B\rangle_s 
= \tr[ \sigma^{s}(\sigma^t A \sigma^{-t})^* \sigma^{1-s}B]  
= \tr[ \sigma^{s-t} A^* \sigma^{1-s+t}B] 
= \langle A,B\rangle_{s-t} 
= \langle A, \alpha_{it}(B)\rangle_{s} \ ,
	\end{align*}
which is the desired identity	
\end{proof}

The following lemma is a result of Alicki \cite{A78} for $s=1$, and the generalization  to $s\in [0,1]$, $s\neq 1/2$, can be found in \cite[Propositon 8.1]{FU07}. The following short proof, a simple adaptation of Alicki's argument, is included for the reader's convenience. 

\begin{lm}\label{Al2}  Let $\sigma \in \Dens_+$ be a non-degenerate density matrix, and let $s\in [0,1]$, $s\neq 1/2$. 
Let $\cK$ be any operator on $\aA$ that is self-adjoint with respect to $\langle \cdot, \cdot\rangle_s$ and also preserves self-adjointness. 
Then $\cK$ commutes with $\alpha_t$, for all $t$, real and complex. 
\end{lm}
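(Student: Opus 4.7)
My plan is to use the two hypotheses --- self-adjointness of $\cK$ with respect to $\langle\cdot,\cdot\rangle_s$ and preservation of self-adjointness by $\cK$ --- to show that $\cK$ is automatically self-adjoint with respect to $\langle\cdot,\cdot\rangle_{1-s}$ as well, and then to exploit the gap $s\ne 1-s$ (which is where the hypothesis $s\neq 1/2$ enters) to extract a commutation relation with $\alpha_{i(2s-1)}$. A spectral argument for $\Delta_\sigma=\alpha_i$ will then propagate this to commutation with the entire analytic family $\alpha_z$, $z\in\C$. Substituting $A=X^*$ in $\langle A,\cK B\rangle_s=\langle\cK A,B\rangle_s$ and using $(\cK A)^*=\cK(A^*)$, the self-adjointness condition reads
$$\tr[\sigma^s X\sigma^{1-s}\cK(B)]=\tr[\sigma^s\cK(X)\sigma^{1-s}B]\qquad(\star)$$
for all $X,B\in\aA$. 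Cycling $\sigma^s X$ to the back on the left-hand side, and $\sigma^s\cK(X)$ to the back on the right-hand side, turns $(\star)$ into $\tr[\sigma^{1-s}\cK(B)\sigma^s X]=\tr[\sigma^{1-s}B\sigma^s\cK(X)]$, which is exactly $(\star)$ with $s$ replaced by $1-s$ and with $X,B$ swapped. So $\cK$ is also self-adjoint for $\langle\cdot,\cdot\rangle_{1-s}$.

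Next I substitute $X\mapsto \alpha_{i(2s-1)}(X)=\sigma^{2s-1}X\sigma^{1-2s}$ in the $(1-s)$-version of $(\star)$. The left-hand side simplifies to $\tr[\sigma^s X\sigma^{1-s}\cK(B)]$, which by the original $(\star)$ equals $\tr[\sigma^s\cK(X)\sigma^{1-s}B]$; comparing with the (substituted) right-hand side and using that $B\in\aA$ is arbitrary yields
$$\sigma^s\cK(X)\sigma^{1-s}=\sigma^{1-s}\cK(\alpha_{i(2s-1)}(X))\sigma^s$$
for every $X\in\aA$, which is the same as $\alpha_{i(2s-1)}\circ\cK=\cK\circ\alpha_{i(2s-1)}$.

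Finally, set $r:=2s-1$. The hypothesis $s\neq 1/2$ makes $r\neq 0$, and $\alpha_{ir}$ coincides with $\Delta_\sigma^r$. Since $\Delta_\sigma$ has a finite set of distinct positive eigenvalues $\{e^{-\omega_\gamma}\}$ and the map $\lambda\mapsto \lambda^r$ is injective on $(0,\infty)$, the spectral projections of $\Delta_\sigma^r$ agree with those of $\Delta_\sigma$. Hence $\cK$ commutes with every spectral projection of $\Delta_\sigma$, and therefore with $\Delta_\sigma^z$ for every $z\in\C$, i.e.\ with $\alpha_z$ for all $z\in\C$. The main subtlety is the cyclicity argument in the first paragraph, which packages both hypotheses into the symmetric pair of self-adjointness identities at $s$ and $1-s$; the condition $s\neq 1/2$ itself only enters at the very last step, to guarantee $r\neq 0$ so that the spectral-projection argument goes through.
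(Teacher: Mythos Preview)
Your proof is correct and follows essentially the same route as the paper's: both arguments use the self-adjointness at $\langle\cdot,\cdot\rangle_s$ together with self-adjointness preservation to arrive at the commutation relation $\alpha_{i(2s-1)}\circ\cK=\cK\circ\alpha_{i(2s-1)}$, and then pass from this to commutation with the full modular group via the functional calculus for $\Delta_\sigma^{2s-1}$. Your presentation makes the intermediate fact that $\cK$ is also self-adjoint for $\langle\cdot,\cdot\rangle_{1-s}$ explicit, whereas the paper folds this into a single chain of trace identities and then invokes Lemma~\ref{lem:alpha-id}; but the substance is the same.
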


\begin{proof}
For any $A,B\in \aA$,
\begin{eqnarray*}
\langle  \cK \alpha_{i(2s-1)}(A), B\rangle_s 
&=& \tr[ \sigma^s (\cK (\sigma^{2s-1}A \sigma^{1-2s}))^* \sigma^{1-s} B]\nonumber \\
&=& \tr[ \sigma^s (\sigma^{2s-1}A \sigma^{1-2s})^*  \sigma^{1-s}\cK ( B)]\nonumber\\
&=& \tr[  \sigma^{1-s}A^* \sigma^{s}  \cK ( B)]
= \tr[\sigma^{s}  (\cK ( B^*))^*  \sigma^{1-s}A^* ]\nonumber\\
&=& \tr[\sigma^{s}  B  \sigma^{1-s}\cK (A^*) ] = \tr[\sigma^{1-s}( \cK(A))^* \sigma^s B]  = \langle \cK (A), B\rangle_{1-s}\nonumber
\end{eqnarray*}
Since $s- (2s-1) = 1-s$, (\ref{modulaC}) yields $\langle \cK (A), B\rangle_{1-s} = \langle  
\alpha_{i(2s-1)}(\cK (A)), B\rangle_s$. 
As $B$ is arbitrary,  $\alpha_{i(2s-1)} \cK  = \cK \alpha_{i(2s-1)}$. 
Since $\cK $ commutes with $\alpha_{i(2s-1)}$, it commutes with every polynomial in the 
positive self-adjoint operator $\alpha_{i(2s-1)}= \Delta_\sigma^{2s-1}$, and hence with 
$ f(\Delta_\sigma^{2s-1})$ for every function $f$. 
In particular, $\cK$ commutes  with $\alpha_t$ for all $t$. 
\end{proof} 

\begin{remark} Davies \cite{Da74} has studied a class of quantum Markov semigroups 
that arise in a general model of an $n$-level quantum system coupled to an infinite heat bath. 
He studied the weak-coupling limit and gave conditions under which the 
weak-coupling limit produces a quantum Markov semigroup.  
This procedure always yields a semigroup that commutes with the evolution given by 
(\ref{modula}) where $h$ is the internal Hamiltonian of the $n$-level system.  
This is true whether or not the semigroup has a particular self-adjointness property. 
In view of Davies' result, it is natural to focus on quantum Markov semigroups that commute 
with the modular operator associated to their invariant states.
\end{remark}

\begin{remark}\label{halfdif} The condition that the generator $\cL$ of a quantum Markov semigroup  
$\cP_t = e^{t\cL}$ commutes with $\Delta_\sigma$ imposes strong restrictions on the structure of $\cL$. 
Consider the case $\aA = \cM_2(\C)$.
Let $\sigma\in \Dens_+$ have two distinct eigenvalues $\lambda_1, \lambda_2 > 0$. 
Let $\{\eta_1,\eta_2\}$ be an orthonormal basis of $\C^2$ consisting of eigenvectors of 
$\sigma$: $\sigma \eta_j = \lambda_j\eta_j$ for $j=1,2$. 
Then $\Delta_\sigma$ has three distinct eigenvalues $1$, 
$\lambda_1/\lambda_2$ and $\lambda_2/\lambda_1$. 
The latter two eigenvalues have one-dimensional eigenspaces spanned by $|\eta_1\bk \eta_2|$ and $|\eta_2\bk \eta_1|$ respectively. 
If $\cL$ commutes with $\Delta_\sigma$, then $|\eta_1\bk \eta_2|$ and $|\eta_2\bk \eta_1|$ must be eigenvectors of $\cL$ with eigenvalues, say, $\nu$ and $\tilde\nu$. 
Since $A := |\eta_1\bk \eta_2| + |\eta_2\bk \eta_1|$ is self-adjoint and $\cL$ is self-adjointness preserving, it follows that $\cL(A) = \nu  |\eta_1\bk \eta_2| + \tilde\nu |\eta_2\bk \eta_1|$ is self-adjoint, which implies that $\tilde\nu = \nu$. 
Moreover, since both $A$ and $\cL(A) = \nu A$ are self-adjoint, it follows that $\nu$ is real.
Thus $\cL$ has at most $3$ eigenvalues $1$, $\mu$ and $\nu$, and $\nu$ has multiplicity $2$ (or $3$ in case $\nu = \mu$). To summarize, it follows that 
\begin{align}\label{lsepc1}
	\cL(|\eta_1\bk \eta_2|) = \nu |\eta_1\bk \eta_2| \tand
	\cL(|\eta_2\bk \eta_1|) = \nu |\eta_2\bk \eta_1| \quad \text{with }
	\nu \in \R \ .
\end{align}

From here it is not hard to see that the value $s=1/2$ is genuinely exceptional in Lemma~\ref{Al2}. 
Suppose that $\cL$ is self-adjoint with respect to the $\sigma$-KMS inner product 
$\langle \cdot,\cdot\rangle_{1/2}$ for $\sigma\in \Dens_+$, where $\sigma$ has distinct eigenvalues and $\{\eta_1,\eta_2\}$ is an orthonormal basis of $\C^2$ consisting of eigenvectors of $\sigma$. 
It follows from the discussion above that if \eqref{lsepc1} is violated, $\cL$ does not commute with $\Delta_\sigma$.  
There are many such operators $\cL$ on $\cM_2(\C)$.
An explicit construction is given in Appendix B.
\end{remark}

\begin{lm} Let $\sigma \in \Dens_+$ be a non-degenerate density matrix. 
Let $\cK$ be any operator on $\aA$ such that $\cK \alpha_t = \alpha_t \cK$ for all $t$, or equivalently, $\Delta_\sigma \cK = \cK \Delta_\sigma$.
If $\cK$ is self-adjoint with respect to the inner product $\langle \cdot,\cdot\rangle_f$ for \emph{some} function $f:(0,\infty) \to (0,\infty)$, then the same holds for \emph{every} function $f:(0,\infty) \to (0,\infty)$. 
\end{lm}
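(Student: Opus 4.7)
The plan is to reduce the self-adjointness condition to an $f$-independent operator identity. Recall from the discussion preceding the lemma that $\cK$ is self-adjoint with respect to $\langle \cdot, \cdot \rangle_f$ if and only if
\[
\Omega_\sigma^f \circ \cK = \cK^\dagger \circ \Omega_\sigma^f, \qquad \text{where } \Omega_\sigma^f = R_\sigma \circ f(\Delta_\sigma) .
\]
So the goal is to show that, under the commutation hypothesis $[\Delta_\sigma, \cK] = 0$, this relation does not actually depend on $f$.

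First I would record two commutation facts. By hypothesis $\cK$ commutes with $\Delta_\sigma$, hence by the finite-dimensional spectral theorem $\cK$ commutes with $g(\Delta_\sigma)$ for every function $g$ defined on the spectrum of $\Delta_\sigma$; in particular with $f(\Delta_\sigma)$. Second, since $\Delta_\sigma = L_\sigma \circ R_{\sigma^{-1}}$ and left and right multiplications always commute, $R_\sigma$ commutes with $\Delta_\sigma$, and therefore also with $f(\Delta_\sigma)$ for every $f$.

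Using these two commutations, I can rewrite
\[
\Omega_\sigma^f \circ \cK = R_\sigma \circ f(\Delta_\sigma) \circ \cK = R_\sigma \circ \cK \circ f(\Delta_\sigma),
\qquad
\cK^\dagger \circ \Omega_\sigma^f = \cK^\dagger \circ R_\sigma \circ f(\Delta_\sigma) .
\]
Because $f$ takes values in $(0, \infty)$ and $\Delta_\sigma$ is strictly positive with finite spectrum, $f(\Delta_\sigma)$ is a strictly positive, hence invertible, operator on $\fH_\aA$. Cancelling it on the right therefore shows that the self-adjointness condition is equivalent to the $f$-independent identity
\[
R_\sigma \circ \cK = \cK^\dagger \circ R_\sigma .
\]
Since the right-hand side makes no reference to $f$, self-adjointness of $\cK$ with respect to $\langle \cdot, \cdot \rangle_f$ for \emph{some} positive $f$ implies the same for \emph{every} positive $f$, which is the desired conclusion.

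There is no real obstacle in this argument: the only substantive point is the functional calculus observation that the commutation hypothesis lets $\cK$ slide past every $f(\Delta_\sigma)$, combined with the elementary fact that $R_\sigma$ commutes with the modular operator. Once these are in place, invertibility of $f(\Delta_\sigma)$ allows us to strip off the $f$-factor entirely.
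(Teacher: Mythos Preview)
Your proof is correct and follows essentially the same approach as the paper: both hinge on the functional-calculus observation that $[\cK,\Delta_\sigma]=0$ forces $\cK$ to commute with every $h(\Delta_\sigma)$, which lets the $f$-dependence be removed. The only cosmetic difference is that the paper computes directly with inner products (passing from $\langle\cdot,\cdot\rangle_g$ to $\langle\cdot,\cdot\rangle_f$ via $h=g/f$), while you work at the level of the operator identity and cancel the invertible factor $f(\Delta_\sigma)$ to isolate the $f$-free condition $R_\sigma\circ\cK=\cK^\dagger\circ R_\sigma$.
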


\begin{proof}  Suppose that $\cK$ is self-adjoint with respect to the inner product $\langle \cdot,\cdot\rangle_f$ for some function $f:(0,\infty) \to (0,\infty)$. Let $g : (0,\infty) \to (0,\infty)$ be arbitrary, and write $h = g/f$. Since $\cK$ commutes with $\Delta_\sigma$, it also commutes with $h(\Delta_\sigma)$. Thus, for all $A,B\in \aA$,
\begin{align*}
	\langle A,\cK (B)\rangle_g
&	= \tr[\sigma A^* g(\Delta_\sigma) \cK (B)]
	= \tr[\sigma A^* f(\Delta_\sigma) \cK h(\Delta_\sigma) B]
	= \langle A,\cK h(\Delta_\sigma)B\rangle_f
\\&	= \langle \cK (A),h(\Delta_\sigma) B\rangle_f
	= \tr[\sigma \cK(A)^* f(\Delta_\sigma) h(\Delta_\sigma) B]
    = \langle \cK (A),B\rangle_g \ ,
\end{align*}
which is the desired result.
\end{proof}
We summarize some immediate consequences of the last lemmas  in a theorem:
\begin{thm}\label{detequiv}  
 Let $\sigma \in \Dens_+$ be a non-degenerate density matrix, and let $\cK$ be any operator on $\aA$. Then:

\smallskip
\noindent{\it (1)}  If $\cK$ is self-adjoint with respect to the GNS inner product $\langle \cdot,  \cdot \rangle_1$, 
and $\cK A^* = (\cK A)^*$ for all $A\in \aA$, 
then $\cK$ commutes with the modular automorphism group of $\sigma$ and moreover, $\cK$ is self-adjoint with respect to $\langle \cdot,  \cdot \rangle_f$ for all 
$f: (0,\infty) \to (0,\infty)$.

\smallskip
\noindent{\it (2)} If $\cK$ commutes with the modular automorphism group of $\sigma$, and if $\cK$ is self-adjoint with respect to 
$\langle \cdot,  \cdot \rangle_f$ for some $f: (0,\infty) \to (0,\infty)$, then $\cK$ is self-adjoint with respect to  $\langle \cdot,  \cdot \rangle_g$ for all $g: (0,\infty) \to (0,\infty)$.
\end{thm}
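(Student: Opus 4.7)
The plan is to observe that this theorem is essentially a repackaging of the two preceding lemmas, so the proof should be very short. The only real work is matching the hypotheses correctly.

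For part (1), I would note that the GNS inner product $\langle\cdot,\cdot\rangle_1$ coincides with $\langle \cdot,\cdot\rangle_s$ at $s=1$, which by \eqref{detailBB} is the same as $\langle \cdot,\cdot\rangle_f$ for the constant function $f\equiv 1$. Since $s=1\neq 1/2$, the hypotheses of Lemma~\ref{Al2} apply (self-adjointness preservation is given, and self-adjointness with respect to $\langle\cdot,\cdot\rangle_1$ is the other hypothesis), yielding that $\cK$ commutes with $\alpha_t$ for all real and complex $t$; in particular $\cK$ commutes with $\Delta_\sigma = \alpha_i$. Then I would immediately apply the preceding (unnumbered) lemma, taking the ``some $f$'' in its hypothesis to be the constant function $1$, to conclude self-adjointness with respect to $\langle\cdot,\cdot\rangle_g$ for every positive function $g$.

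Part (2) is a direct application of the preceding unnumbered lemma and requires no extra argument: its hypotheses are precisely the hypotheses of (2), and its conclusion is the conclusion of (2).

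Since both pieces are verbatim invocations of previously proved statements, there is no real obstacle. The only care needed is to verify the identification of the GNS inner product with both $\langle\cdot,\cdot\rangle_{s=1}$ (so Lemma~\ref{Al2} applies) and $\langle\cdot,\cdot\rangle_{f\equiv 1}$ (so the $f$-lemma applies), which is the content of the remark following \eqref{detailB3}. I would write the proof in two short paragraphs, one per part, each essentially a citation, and I would not repeat any of the computations from the two lemmas.
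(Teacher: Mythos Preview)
Your proposal is correct and matches the paper's approach exactly: the theorem is presented as an immediate summary of Lemma~\ref{Al2} and the unnumbered lemma preceding it, with no additional argument needed.
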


This brings us to the definition of detailed balance, which is one the various definitions that may be found in the physical literature.

\begin{defi}[Detailed balance] \label{detailedbalance} A QMS $\cP_t$ on $\aA$  satisfies the {\em detailed balance condition} with respect to 
$\sigma\in \Dens_+(\aA)$ in case for each $t>0$, $\cP_t$ is self-adjoint in the $\sigma$-GNS inner product $\langle \cdot, \cdot\rangle_1$. 
In this case $\sigma$ is invariant under 
$\cP_t^\dagger$, and we say that the QMS $\cP_t$ satisfies the $\sigma$-DBC. 
\end{defi} 

\begin{remark} Every QMS $\cP_t = e^{t\cL}$ is self-adjointness preserving: $\cP_t A^* = (\cP_t A)^*$. 
Thus, when $\cP_t$ satisfies the $\sigma$-DBC, it follows from Theorem \ref{detequiv} that 
\begin{equation}\label{modcom}
\alpha_{t'}(\cP_t(A)) = \cP_t(\alpha_{t'}(A))
\end{equation}
for all $t,t'$ and all $A\in \aA$.
This crucial observation is due to Alicki \cite{A78}, and it means that $\cP_t$ commutes with the time-translation governed by the Hamiltonian $h$ corresponding to the state 
$\sigma$. The condition (\ref{modcom}) may therefore be viewed as a quantum analog of time-translation 
invariance or {\em stationarity}. 
\end{remark}

Theorem~\ref{detequiv} asserts that in the presence of detailed balance,  
$\cP_t$  is self-adjoint with respect to a wide variety of inner products and also that $\cP_t$ commutes with the modular group.  
Moreover, under the condition that $\cP_t$ commutes with the modular group, self-adjointness 
with respect to any member of this wide family of inner products implies self-adjointness 
with respect to all of them. This is important in what follows.

The inner products defined just above include a number of inner products that arise 
naturally in the theory of operator algebras and mathematical physics.  
One that will be especially useful here  is the Bogoliubov-Kubo-Mori inner product:
\begin{equation*}
\langle A,B\rangle_{{\rm BKM}}  
= \int_0^1 \tr[ \sigma^{1-s}A^* \sigma^{s}B] {\rm d} s = 
\int_0^1 \tr[ \sigma A^* \Delta_\sigma^s B] {\rm d} s = \langle A,B\rangle_{f_{0}} \ ,
\end{equation*}
where
\begin{equation*}
f_{0}(t) = \int_0^1 t^s{\rm d}s =  \frac{t-1}{\log t} \ .
\end{equation*}
The BKM inner product arises naturally in statistical mechanics and our work here as follows: 
For a self-adjoint operator $h$ on a finite-dimensional Hilbert space $\H$, and $\beta > 0$, 
define the {\em Gibbs state} $\sigma_\beta$ by
\begin{equation*}
\sigma_\beta = \frac{1}{\tr[e^{-\beta h}]} e^{-\beta h}\ .
\end{equation*}
The {\em free energy} is the functional $\mathcal{F}(\beta,h) = 
\beta^{-1}\ln(\tr[e^{-\beta h}])$. A simple calculation using Duhamel's formula shows that for any self-adjoint $A$, 
\begin{equation*}
\frac{{\rm d}^2}{{\rm d}s^2}\mathcal{F}(\beta,h+sA)\bigg|_{s=0}  = \beta\left[ \langle A,A\rangle_{{\rm BKM}} - 
(\tr[\sigma_\beta A])^2\right]\ .
\end{equation*}
In particular, up to a factor of $\beta$, $\langle A,A\rangle_{{\rm BKM}}$ arises as the restriction of the Hessian of $\mathcal{F}$ to the space of self-adjoint matrices $A$ satisfying $\tr[\sigma_\beta A] =0$. 
It is well known that for fixed $\beta$, the function $h\mapsto \mathcal{F}(\beta,h)$ is the Legendre transform of the von Neumann entropy $S(\rho)$ on the set of density matrices. 
Since the gradients and Hessians of conjugate convex functions are inverse to one another, the Hessian of the free energy is the inverse of Hessian of the entropy; a fact that explains why the BKM inner product will arise naturally in the study of gradient flows of the relative entropy.

\section{Generators of quantum Markov semigroups satisfying detailed balance}

Since a QMS $\cP_t =e^{t\cL}$ on $\aA$ that satisfies the $\sigma$-DBC for some $\sigma\in \Dens_+(\aA)$
has a generator $\cL$ that commutes with the modular operator $\Delta_\sigma$, and since 
$\Delta_\sigma$ is positive with respect to the GNS inner product, $\Delta_\sigma$ and $\cL$ can be simultaneously diagonalized.  In the case $\aA = \cM_n(\C)$, the diagonalization of
$\Delta_\sigma$ reduces immediately to the diagonalization of $\sigma$:  Let $\sigma = e^{-h}$ be a density matrix 
on $\C^n$. Let $\{\eta_1,\dots,\eta_n\}$ be an orthonormal basis of $\C^n$ consisting of eigenvectors of $h = -\log \sigma$: 
$h\eta_j = \lambda_j \eta_j$. For $\alpha = (\alpha_1,\alpha_2) \in \{(i,j)\ : 1 \leq i ,j \leq n\}$, define numbers
$\omega_\alpha$ (called the {\em Bohr frequencies}) by
\begin{equation}\label{hform2}
\omega_{\alpha} = \lambda_{\alpha_1} - \lambda_{\alpha_2}\ ,
\end{equation}
and rank-one operators $F_\alpha$ given by
$F_\alpha = |\eta_{\alpha_1}\bk \eta_{\alpha_2}|$
where, using a standard physics notation, for $\eta,\xi\in \C^n$, $|\eta \bk \xi |$ is the rank-one operator sending $\zeta$ to $\langle \xi,\zeta\rangle_{\C^n}\eta$. 
Evidently
\begin{equation}\label{hform5}
\Delta_\sigma F_\alpha  = e^{-\omega_\alpha}F_\alpha \qquad{\rm and} \qquad F_\alpha^* = F_{\alpha'} \quad{\rm where}\quad  \alpha' = (\alpha_2,\alpha_1) \ .
\end{equation}

Alicki \cite{A78} exploited such a construction to show that for $\sigma\in \Dens_+$ 
with non-degenerate spectrum (in the sense that each eigenvalue of $\sigma$ is simple), the generator 
$\cL$ of a quantum Markov semigroup on $\cM_n(\C)$ that satisfies the 
$\sigma$-DBC (so that $\cL$ commutes with $\Delta_\sigma)$ has the form described in (\ref{genform}) below; see 
\cite[Theorem 3]{A78}. 
(Alicki actually considered the more general case that $\cL$ is {\em normal} with respect to the 
$\sigma$-GNS inner product. 
His formula reduces to (\ref{genform}) when $\cL$ is self-adjoint.)
The hypothesis that $\sigma$ has non-degenerate spectrum turns out to unnecessary. 
In the context of full matrix algebras this has been shown in 
\cite{KFGV} for the alternate canonical form given in \cite[Theorem 3]{A78}.  
It is possible to give a simple and self-contained proof of Alicki's Theorem in a somewhat more general setting. This is done in Appendix A; the theorem proved there is:

\begin{thm}\label{strucB} Let $\cP_t = e^{t \cL}$ be a QMS on a  unital $C^*$-subalgebra 
$\aA$ of $\cM_n(\C)$. Suppose that $\cP_t$ satisfies the $\sigma$-DBC for 
$\sigma\in \Dens_+(\aA)$ and that $\cP_t$ has an extension $\widehat \cP_t$ to a QMS on $\cM_n(\C)$.
Regard the modular operator $\Delta_\sigma$ as an operator on $\cM_n(\C)$ and
let $\tau$ denote the normalized trace on $\cM_n(\C)$.
Then the generator $\cL$ of $\cP_t$ has the form
 \begin{align}
	\cL A &=  \sum_{j \in \cJ}  \Big(e^{-\omega_j/2}  V_j^* [A, V_j]  
	   + e^{\omega_j/2}   [V_j , A] V_j^*\Big) \label{genform}\\
	   & = 
	 \sum_{j \in \cJ}  e^{-\omega_j/2} 
	 \Big( V_j^* [A, V_j] +  [V_j^* ,A] V_j \Big)  \label{genform2}
\end{align}
where $\omega_j \in \R$ for all $j \in \cJ$, and $\{V_j\}_{j\in \cJ}$ is a set in 
$\cM_n(\C)$ with the properties:

 \smallskip
 \noindent{\it(i)} $\tau[V_j^*V_k]= \delta_{j,k}$ for all $j,k\in \cJ$.
 
 \smallskip
 \noindent{\it(ii)} $\tau[V_j] =0$ for all $j\in \cJ$.
 
 \smallskip
 \noindent{\it(iii)} $\{V_j\}_{j\in \cJ} = \{V_j^*\}_{j\in \cJ}$.
 
 \smallskip
 \noindent{\it(iv)} $\{V_j\}_{j\in \cJ}$ consists of eigenvectors of the modular operator $\Delta_\sigma$ with
\begin{equation}\label{Vcom}
\Delta_\sigma V_j  = e^{-\omega_j}V_j \ .
\end{equation}

Conversely, given any $\sigma \in \Dens_{+}(\aA)$, and any set $\{V_j\}_{j\in \cJ}$ satisfying {\it (iii)} and {\it (iv)} for some $\{\omega_j\}_{j\in \cJ} \subseteq \R$, the operator $\cL$ given by (\ref{genform}) is the generator of a QMS $\cP_t$ that satisfies the $\sigma$-DBC. 
\end{thm}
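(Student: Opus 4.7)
The plan is to combine the Lindblad--GKS structure theorem for QMS generators on $\cM_n(\C)$ with the symmetry forced by the $\sigma$-DBC, namely that $\cL$ commutes with the modular operator $\Delta_\sigma$ (a consequence of Theorem~\ref{detequiv}), and then to refine the Kraus decomposition so that its operators are eigenvectors of $\Delta_\sigma$ and closed under $\ast$.

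For the forward direction, I would first apply the Lindblad--GKS theorem to the extension $\widehat\cP_t$ to write
\[
\widehat\cL(A) = i[K, A] + \sum_k \Big(W_k^* A W_k - \tfrac{1}{2}\{W_k^*W_k, A\}\Big),
\]
with $K=K^*$ and Kraus operators $\{W_k\}$ chosen traceless and Hilbert--Schmidt orthonormal, already giving properties (i) and (ii). The $\sigma$-DBC and Theorem~\ref{detequiv} force $\widehat\cL \circ \Delta_\sigma = \Delta_\sigma \circ \widehat\cL$. I would then exploit the standard unitary gauge freedom of Kraus decompositions (the $W_k$'s are determined only up to a unitary mixing, with a compensating adjustment of $K$) to replace $\{W_k\}$ by a basis $\{V_j\}$ of the Kraus space consisting of simultaneous eigenvectors of the induced $\Delta_\sigma$-action, yielding (iv) with $\Delta_\sigma V_j = e^{-\omega_j}V_j$. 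Since the $\Delta_\sigma$-eigenspaces come in $\ast$-conjugate pairs by (\ref{conjeig}) and the dissipative part of $\widehat\cL$ preserves self-adjointness, the basis can be further arranged to be closed under $\ast$, giving (iii). Finally, expanding $\langle A, \widehat\cL B\rangle_1 = \langle \widehat\cL A, B\rangle_1$ in this basis and using $\sigma V_j = e^{-\omega_j} V_j \sigma$ pins the dissipative coefficients to $e^{-\omega_j/2}$ as in (\ref{genform}), and forces $i[K,\cdot]$ to vanish on $\aA$.

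For the converse, given $\{V_j\}_{j\in\cJ}$ and real $\{\omega_j\}_{j\in\cJ}$ satisfying (iii) and (iv), one rewrites (\ref{genform2}) as
\[
\cL(A) = \sum_{j\in\cJ} 2 e^{-\omega_j/2}\Big(V_j^* A V_j - \tfrac{1}{2}\{V_j^* V_j, A\}\Big),
\]
which is manifestly in Lindblad form with rescaled Kraus operators $\sqrt{2}\, e^{-\omega_j/4}V_j$, so $e^{t\cL}$ is completely positive by GKS--Lindblad, and $\cL\one = 0$ follows immediately from the commutators at $A=\one$. For $\sigma$-DBC, I would compute $\tr[\sigma A^* \cL B]$ by expanding with the form of $\cL$, moving $\sigma$ through each $V_j$ via $\sigma V_j = e^{-\omega_j}V_j \sigma$, and re-indexing the sum using the involution $V_j \mapsto V_j^*$, which by (\ref{conjeig}) sends $\omega_j \mapsto -\omega_j$. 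The exponential weights recombine so that the result equals $\tr[\sigma (\cL A)^* B]$, establishing GNS self-adjointness.

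The main obstacle is the diagonalization step in the forward direction: although $\widehat\cL$ and $\Delta_\sigma$ commute in the abstract, the Kraus space is only canonically defined up to unitary mixing, and one has to push the commutation down to this level to produce a basis of $\Delta_\sigma$-eigenvectors that is simultaneously closed under $\ast$. Moreover, showing that the Hamiltonian $K$ may be absorbed (i.e., lies in the commutant of $\aA$) requires a careful accounting that interlocks the skew-symmetric part $i[K,\cdot]$ with the symmetric structure imposed by GNS self-adjointness — this is the most delicate bookkeeping in the argument.
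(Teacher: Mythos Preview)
Your overall strategy---use the GKS/Lindblad structure on the extension, exploit the commutation $\cL\Delta_\sigma=\Delta_\sigma\cL$ from Theorem~\ref{detequiv} to diagonalize the Kraus space into $\Delta_\sigma$-eigenvectors, and then use GNS self-adjointness to nail down the weights and kill the Hamiltonian---is exactly the architecture of the paper's proof. The paper, however, organizes the computation through the \emph{GKS matrix} $c_{\alpha,\beta}$ in a modular basis $\{F_\alpha\}$ (Definition~\ref{modbadef}) rather than through the Lindblad form directly, and this is where your sketch has a genuine gap.

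Your claim that GNS self-adjointness ``pins the dissipative coefficients to $e^{-\omega_j/2}$'' is not correct as stated. After diagonalizing into $\Delta_\sigma$-eigenvectors $V_j$, the dissipator reads $\sum_j \lambda_j\big(V_j^*AV_j-\tfrac12\{V_j^*V_j,A\}\big)$ with $\lambda_j\ge 0$ unconstrained in size; nothing forces $\lambda_j=2e^{-\omega_j/2}$. What the $\sigma$-DBC actually yields (this is the content of the paper's relation~(\ref{block2}), $c_{\alpha,\beta}=e^{-\omega_\alpha}c_{\beta',\alpha'}$) is a \emph{pairing constraint}: if $V_{j'}=V_j^*$ so that $\omega_{j'}=-\omega_j$, then $\lambda_{j'}=e^{\omega_j}\lambda_j$. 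Only after writing $\lambda_j=c_j\,e^{-\omega_j/2}$ does this become $c_{j'}=c_j$, and then the common $\sqrt{c_j}$ is absorbed into $V_j$ (at the cost of dropping normalization~(i), which is later restored by discarding the $j$ with $c_j=0$). Without this step you cannot pass from the generic Lindblad form to the specific symmetric form~(\ref{genform}). Relatedly, your argument that $\{V_j\}$ can be taken $\ast$-closed needs the same relation: the $\ast$-closure of the Kraus \emph{space} follows from (\ref{conjeig}), but matching the \emph{coefficients} so that a single list serves for both (\ref{genform}) and (\ref{genform2}) requires~(\ref{block2}).

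The vanishing of the Hamiltonian is also not just ``interlocking bookkeeping'': in the paper it is a one-line consequence of the same symmetry. Writing $H$ via~(\ref{dagform2}), only indices $\beta$ with $\omega_\beta=0$ survive by the block structure~(\ref{block}), and then~(\ref{block2}) gives $c_{(1,1),\beta}=c_{\beta',(1,1)}$, so the two sums in~(\ref{dagform2}) cancel and $H=0$. Your route via the Lindblad gauge freedom can be made to work, but you would have to reprove exactly these two facts---the pairing $\lambda_{j'}=e^{\omega_j}\lambda_j$ and the cancellation of $H$---and the GKS-matrix formulation in a modular basis is what makes them transparent.
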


\begin{remark}\label{rmk:equiv-expressions}
The eigenvectors of $\Delta_\sigma$ with eigenvalues other than $1$ cannot be self-adjoint on account of (\ref{conjeig}). However, when $\sigma$ is the normalized trace, $\Delta_\sigma$ is the identity, so that each $V_j$ is an eigenvector of $\Delta_\sigma$ with eigenvalue $1$, thus $\omega_j = 0$. 
It is then possible to take each $V_j$ to be self-adjoint, so that (\ref{genform2}) reduces to
\begin{equation}\label{genform3}
	\cL A = - \sum_{j \in \cJ}  [V_j, [V_j ,A] ]  \ .
\end{equation}
This formulation arises naturally in various applications, as we shall see in Section \ref{sec:examples}.
\end{remark}

\begin{remark}\label{rmk:dual}
By Theorem \ref{strucB}, the Hilbert-Schmidt adjoint of $\cL$ is given by 
  \begin{eqnarray}
 \cL^\dagger \rho
   &=&
    \sum_{j \in \cJ} \Big( e^{-\omega_j/2} [V_j \rho,  V_j^*] 
	   + e^{\omega_j/2} [V_j^*, \rho V_j] \Big) 
	   \label{eq:adj-2}
   \\&=&
  \sum_{j \in \cJ} e^{-\omega_j/2} \Big( [V_j \rho,  V_j^*] + [V_j, \rho V_j^*] \Big) \ . \label{eq:adj-1}\nonumber
  \end{eqnarray}
\end{remark}

\begin{remark} Note that the operators in $\{V_j\}_{j\in \cJ}$ need not belong to $\aA$ itself.  The Fermi Ornstein-Uhlenbeck semigroup  in the Clifford algebra with an odd number of generators provides an example in which they 
do not, as we discuss shortly.  

\end{remark}

\begin{remark}\label{cannonical} By properties {\it (i)} and  {\it (ii)}, 
the index set $\cJ$ has cardinality 
$|\cJ|$  no greater than $n^2-1$.   While in general the set $\{V_j\}_{j\in \cJ}$ is not uniquely determined, the proof in the appendix shows that $|\cJ|$ is uniquely determined. 
Moreover, if $\{V_j\}_{j\in \cJ}$ and $\{\tilde V_j\}_{j\in \cJ}$ are two such sets, there is an $|\cJ|\times |\cJ|$ unitary matrix $U_{j,k}$ such that for all $j\in \cJ$, ${\displaystyle \tilde V_j = \sum_{k\in \cJ} U_{j,k} V_k}$,
and such that unless $\tilde \omega_k = \omega_j$, $U_{k,j} = 0$. 
Thus, in a strong sense, the sets $\{V_j\}_{j\in \cJ}$ and   $\{\omega_j\}_{j\in \cJ}$
 are canonically associated to $\cL$. (Indeed, since $\Delta_\sigma V_j = e^{-\omega_j}V_j$, the numbers 
 $\{\omega_j\}_{j\in \cJ}$ are fixed once $\sigma$ and the set $\{V_j\}_{j\in \cJ}$ is fixed.)
\end{remark}

\section{Restriction to commutative subalgebras}

Let $\aA$ be a unital $C^*$-algebra, and let $\sigma\in \Dens_+(\aA)$. Let $A_1, A_2$ be eigenvectors of $\Delta_\sigma$: $\Delta_\sigma (A_j) =\lambda_j A_j$, $j=1,2$. 
Then $\lambda_1,\lambda_2>0$, and since the modular operator is an automorphism,
\begin{equation*}
\Delta_\sigma(A_1A_2) = \Delta_\sigma(A_1) \Delta_\sigma(A_2)  = \lambda_1\lambda_2A_1A_2\ .
\end{equation*}
That is, the product of eigenvectors of $\Delta_\sigma$ is again an eigenvector of $\Delta_\sigma$, and moreover, the eigenspace of $\Delta_\sigma$ for the eigenvalue $1$ is an algebra. 
In fact, by (\ref{conjeig}) it is a $*$-algebra, and it consists exactly of those elements  $A\in \aA$ that commute with $\sigma$. Clearly, $\sigma$ itself always belongs to $\aA_\sigma$.

\begin{defi}[Modular subalgebra] Let $\aA$ be a unital $C^*$-algebra, and let $\sigma\in \Dens_+(\aA)$.
The {\em $\sigma$-modular subalgebra of $\aA$}, denoted $\aA_\sigma$, is the $C^*$-subalgebra of $\aA$ consisting of the eigenspace of $\Delta_\sigma$ with eigenvalue $1$. 
\end{defi}
 
Of course if $\sigma = \one$, $\Delta_\sigma$ is the identity on $\aA$, and then $\aA_\sigma = \aA$. 
On the other hand, suppose that  $\aA = \cM_n(\C)$ and let $\{\eta_1,\dots,\eta_n\}$ be an orthonormal basis of $\C^n$ consisting of eigenvectors of $\sigma$ with $\sigma \eta_j = e^{-\lambda_j}\eta_j$ for $j=1,\dots,n$. 
Then for each $1\leq i,j\leq n$, $|\eta_i\bk \eta_j|$ is an eigenvector of $\Delta_\sigma$ with eigenvalue $e^{\lambda_j -\lambda_i}$. 
If the numbers $\{\lambda_1,\dots,\lambda_m\}$, which are the eigenvalues of the modular generator $h$, are all distinct, then the eigenspace of $\Delta_\sigma$ for the eigenvalue $1$ is exactly the span of the set $\{ |\eta_j\bk \eta_j| \}_{j=1}^n$ \cite{A78}. 
In this case $\aA_\sigma$ is an $n$-dimensional commutative $C^*$-subalgebra of $\aA$.

Let $\cP_t=e^{t\cL}$ be an ergodic QMS on $\aA$, and let $\sigma \in \Dens_+$ be its unique invariant state.
Suppose that $\cP_t$ satisfies the $\sigma$-DBC. 
Since $\cL$ and $\Delta_\sigma$ commute by Theorem \ref{detequiv}, the $\sigma$-modular subalgebra $\aA_\sigma$ of $\aA$ is invariant under $\cP_t$ (and under $\cP_t^\dagger$ as well).  
In this case, let $\widecheck \cP_t$ denote the restriction of $\cP_t$ to $\aA_\sigma$. 
 
If $\aA_\sigma$ is commutative, then $\widecheck \cP_t$ is an ergodic QMS on a commutative subalgebra of $\cM_n(\C)$, which may be identified with the transition semigroup of a classical Markov chain. 
This happens whenever each eigenvalue of $\sigma$ is simple \cite{A78}. 

Therefore, consider a QMS $\cP_t = e^{t\cL}$ on $\cM_n(\C)$ that satisfies the $\sigma$-DBC. 
Suppose that $\aA$ is a unital commutative $C^*$-subalgebra of $\cM_n(\C)$ that is invariant under $\cP_t^\dagger$. 
If $\cP_t$ is ergodic,  it follows using the self-adjointness of $\cL$ with respect to $\langle \cdot, \cdot\rangle_1$, that for any $\rho\in \Dens_+(\aA)$, $\sigma = \lim_{t\to\infty}\cP_t^\dagger \rho$, and hence $\sigma\in  \aA$. 

In our finite-dimensional setting, there exists a finite set $\{E_1,\dots,E_m\}$ of {\em minimal} projections in $\aA$ such that $\sum_{k=1}^m E_k = \one$. 
Consequently, $E_jE_k= 0$ for all $j\neq k$, and $\aA$ is the span of $\{E_1,\dots,E_m\}$: For all $A\in \aA$ we have 
\begin{equation}\label{modexp1}
A = \sum_{k=1}^m \frac{a_k}{\tr[E_k]} E_k \quad{\rm where}\quad a_k = \tr[E_k A]\ .
\end{equation}
Define an $m\times m$ matrix $Q$ by
\begin{equation}\label{modexp2}
Q_{k,\ell} = \frac{1}{\tr[E_k]}\tr[ E_k \cL E_\ell]\ .
\end{equation}
A vector $\vec \rho = (\rho_1,\dots,\rho_m) \in \R^m$ is a {\em probability vector} in case $\rho_k \geq 0$ for all $k$, and $\sum_{k=1}^m \rho_k = 1$. 

\begin{thm}\label{restrict}  
Let $\cP_t=e^{t\cL}$ be an ergodic QMS on $\cM_n(\C)$ that satisfies the $\sigma$-DBC for its invariant state $\sigma$.  
Let $\aA$ be a unital commutative $C^*$-subalgebra of $\cM_n(\C)$ that is invariant under $\cP_t^\dagger$. 
Let $\{E_1,\dots,E_m\}$ be a set of minimal projections in $\aA$ such that $\sum_{k=1}^m E_k = \one$.   
The $m\times m$ matrix $Q$ defined by \eqref{modexp2} specifies an ergodic continuous-time Markov chain on $\{1,\dots,m\}$ with jump rates $Q_{k,\ell}$ from $k$ to $\ell$. The corresponding forward equation, governing the evolution of site occupation probabilities is
\begin{equation}\label{modexp2B0}
\frac{{\rm d}}{{\rm d}t}\rho_\ell(t) =
 \sum_{k=1}^m \big(\rho_k(t) Q_{k,\ell} - \rho_\ell(t)Q_{\ell,k} \big) \ .
\end{equation}
A time-dependent probability vector $\vec \rho(t)$ satisfies (\ref{modexp2B0}) if and only if the time-dependent state $\rho(t)$ on $\aA$ given by
\begin{equation}\label{modexp2B1}
	\rho(t) = \sum_{k=1}^m \frac{\rho_k(t)}{\tr[E_k]}E_k
\end{equation}
satisfies $\displaystyle{\ddt \rho(t) = \cL^\dagger \rho(t)}$.  Moreover, the probability vector $\vec \sigma$ given by $\sigma_k = \tr[\sigma E_k]$ for $k=1,\dots,m$ is the unique invariant probability vector for the Markov chain, and the classical detailed balance condition
\begin{equation}\label{modexp3}
\sigma_k Q_{k,\ell} = \sigma_\ell Q_{\ell,k} 
\end{equation}
is satisfied.
\end{thm}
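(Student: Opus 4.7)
The plan is to verify in sequence: (i) the matrix $Q$ is a bona fide Markov generator; (ii) the ODE for $\vec\rho(t)$ is equivalent to the Lindblad equation for $\rho(t)$; (iii) the classical detailed balance \eqref{modexp3} holds; and (iv) $\vec\sigma$ is the unique invariant probability, so that the chain is ergodic.

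First, to show $Q$ is a generator, I would use two ingredients. For $k\neq \ell$, since $E_kE_\ell=0$, the function $t\mapsto \tr[E_k\cP_tE_\ell]$ vanishes at $t=0$ and is non-negative for $t>0$ by positivity preservation of $\cP_t$; differentiating at $t=0$ gives $Q_{k,\ell}\ge 0$. For the diagonal, $\sum_\ell E_\ell=\one$ together with $\cL\one=0$ yields $\sum_\ell \tr[E_k\cL E_\ell]=0$, hence $\sum_\ell Q_{k,\ell}=0$.

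Next, for the ODE equivalence, I would first observe that $\aA$ is invariant under $\cL$ as well as $\cL^\dagger$. Indeed, since $\sigma\in\aA$ (as noted in the paragraph preceding the theorem) and $\aA$ is commutative, the $\sigma$-GNS self-adjointness identity $\Omega_\sigma\cL = \cL^\dagger \Omega_\sigma$ with $\Omega_\sigma = R_\sigma$ yields $\cL(A) = \cL^\dagger(A\sigma)\sigma^{-1}\in\aA$ for every $A\in\aA$. Writing $\rho(t)$ as in \eqref{modexp2B1}, one has $\cL^\dagger\rho(t)\in\aA$, and by the expansion \eqref{modexp1} the coefficient in front of $E_j/\tr[E_j]$ equals
\begin{equation*}
\tr[E_j \cL^\dagger\rho(t)] = \tr[\cL(E_j)\rho(t)] = \sum_{\ell}\frac{\rho_\ell(t)}{\tr[E_\ell]}\tr[E_\ell \cL(E_j)] = \sum_\ell \rho_\ell(t) Q_{\ell,j}\ ,
\end{equation*}
using Hilbert--Schmidt duality, self-adjointness preservation of $\cL$, and the orthogonality $\tr[E_kE_\ell]=\delta_{k,\ell}\tr[E_k]$. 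Matching with $\ddt\rho(t) = \sum_j \dot\rho_j(t) E_j/\tr[E_j]$ yields $\dot\rho_j = \sum_\ell \rho_\ell Q_{\ell,j}$, which, after splitting off $Q_{j,j}=-\sum_{k\neq j}Q_{j,k}$, coincides with \eqref{modexp2B0}.

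For detailed balance, the expansion \eqref{modexp1} applied to $\sigma\in\aA$ gives $\sigma = \sum_k \frac{\sigma_k}{\tr[E_k]}E_k$, hence $\sigma E_k = \frac{\sigma_k}{\tr[E_k]}E_k$. Applying the $\sigma$-GNS self-adjointness of $\cL$ to the pair $(E_k,E_\ell)$ together with cyclicity of the trace and $[\sigma,E_\ell]=0$ gives
\begin{equation*}
\sigma_k Q_{k,\ell} = \tr[\sigma E_k \cL(E_\ell)] = \langle E_k,\cL E_\ell\rangle_1 = \langle \cL E_k, E_\ell\rangle_1 = \tr[\sigma\cL(E_k)E_\ell] = \sigma_\ell Q_{\ell,k}\ .
\end{equation*}
Summing over $k$ shows $\vec\sigma$ is invariant; uniqueness (and therefore ergodicity of the chain) follows because any invariant probability $\vec\rho^{*}$ for $Q$ lifts via \eqref{modexp2B1} to an invariant density matrix in $\Dens_+(\aA)\subset\Dens_+(\cM_n(\C))$ for $\cP_t^\dagger$, which must equal $\sigma$ by the assumed ergodicity of $\cP_t$. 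The only delicate point is the careful bookkeeping of indices and adjoints when passing between the operator-algebraic and classical descriptions; once $\aA$-invariance of $\cL$ and $\sigma\in\aA$ are in hand, the remaining steps reduce to trace identities.
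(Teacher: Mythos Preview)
Your proof is correct and runs parallel to the paper's, with one genuine methodological difference worth noting. To show $Q_{k,\ell}\geq 0$ for $k\neq\ell$, you argue via the semigroup: the function $t\mapsto\tr[E_k\cP_tE_\ell]$ is non-negative by positivity preservation and vanishes at $t=0$, so its derivative there is non-negative. The paper instead invokes the canonical Lindblad form \eqref{genform2} from Theorem~\ref{strucB} and computes $\tr[E_k\cL E_\ell] = 2\sum_{j} e^{-\omega_j/2}\tr[E_kV_j^*E_\ell V_j]\geq 0$ directly. Your route is more elementary (it uses only positivity of $\cP_t$, not the structure theorem), while the paper's makes the source of positivity explicit. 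Your handling of the ODE equivalence is also more explicit than the paper's: you derive $\cL$-invariance of $\aA$ from the GNS identity $\cL(A)\sigma=\cL^\dagger(A\sigma)$ together with $\sigma,\sigma^{-1}\in\aA$ and the assumed $\cL^\dagger$-invariance, whereas the paper simply declares this step ``easy to check.'' One minor correction: in your uniqueness step, the lift of an arbitrary $Q$-invariant probability $\vec\rho^{\,*}$ need not be faithful (some $\rho_k^*$ could vanish), so it lies in $\Dens(\aA)$ rather than $\Dens_+(\aA)$; the conclusion is unaffected because ergodicity of $\cP_t$ gives a unique invariant \emph{state}, not merely a unique faithful one.
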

 
\begin{proof} We first show that the matrix $Q$ satisfies $\sum_{\ell =1}^m Q_{k,\ell} = 0$ and that $Q_{k,\ell} \geq 0$ for all $k\neq \ell$, which makes it a transition rate matrix.  

Since $0 = \cL\one = \sum_{\ell=1}^m \cL E_\ell$, we have $\sum_{\ell=1}^m Q_{k,\ell} = 0$. 
Let $\cL$ be given in the form (\ref{genform2}). 
Then for $k\neq \ell$, simple computations yield
$$\tr[ E_k \cL E_\ell] = 
2\sum_{j \in \cJ}  e^{-\omega_j/2} \tr[E_kV_j^* E_\ell V_j]  .$$
Since $E_k$ and $V_j^* E_\ell V_j$  are positive, $\tr[E_kV_j^* E_\ell V_j] \geq 0$, showing that $Q_{k,\ell} \geq 0$ for all $k \neq \ell$.

Now suppose that $a_k := \sum_{\ell \neq k} Q_{k,\ell} = 0$.
Then from the definition, $\tr[\cL^\dagger(E_k) E_\ell] = \tr[E_k\cL(E_\ell)] = 0$ for all $k\neq \ell$, and then, since $\sum_{\ell=1}^m Q_{k,\ell} = 0$, also $\tr[\cL^\dagger(E_k) E_k] = 0$. 
It would follow that $\cL^\dagger(E_k) = 0$. 
Since $\cP_t$ is ergodic, this is impossible unless $\aA$ is spanned by $\one$, a trivial case. 
Hence we may proceed assuming that $a_k > 0$ for all $k$. 

Define the matrix $P_{k,\ell}$ by 
\begin{align*}
{ P_{k,\ell} = \begin{cases} {\displaystyle \frac{1}{a_k} Q_{k,\ell} }  & \ell \neq k\\ 0 &\ell = k\end{cases}} \ .
\end{align*}
Evidently $P$ is an $m\times m$ stochastic matrix. Define the $m\times m$ matrix $M$ by
$M_{k,\ell} = a_k \delta_{k,\ell}$. 
Then $Q = M( P - \one_m)$ where $\one_m$ is the $m\times m$ identity matrix. The equation 
${\displaystyle  \frac{{\rm d}}{{\rm d}t}\vec \rho(t) = Q^\dagger \vec \rho(t)}$ 
is solved in terms of the initial data
$\vec \rho_0$ by $\vec \rho(t) = e^{t Q^\dagger}\vec \rho_0$, 
and it gives the site occupation probabilities for a continuous time Markov chain in which the jump rate for leaving site $k$ is $a_k$, and when such a jump occurs, the probability that site $\ell$ is the new site occupied is given by $P_{k,\ell}$. 

The equation
${\displaystyle \frac{{\rm d}}{{\rm d}t}\vec \rho(t) = Q^\dagger \vec \rho(t)}$ can be written as
\begin{equation}\label{modexp2B}
\frac{{\rm d}}{{\rm d}t}\rho_\ell(t) =  \sum_{k=1}^m \rho_k(t) Q_{k,\ell}  =
 \sum_{k=1}^m \big(\rho_k(t) Q_{k,\ell} - \rho_\ell(t)Q_{\ell,k} \big) \ ,
\end{equation}
where the second equality follows from the first and the fact that $Q_{\ell,\ell}  = -\sum_{k\neq \ell} Q_{\ell,k}$.
This gives us (\ref{modexp2B0}).

To show that this Markov chain satisfies the classical detailed balance condition (\ref{modexp3}), 
observe that $\sigma E_k = \sigma_k (\tr[E_k])^{-1} E_k = E_k \sigma$ by \eqref{modexp1}, and therefore
\begin{align*}
\sigma_k Q_{k,\ell}
& = \frac{\sigma_k}{\tr[E_k]}\tr[E_k\cL(E_\ell)] 
  = \tr[\sigma E_k\cL(E_\ell)]
  = \tr[\sigma \cL(E_k) E_\ell]
\\&  =  \tr[ E_\ell\sigma \cL(E_k)]
  =  \frac{\sigma_\ell}{\tr[E_\ell]} \tr[ E_\ell \cL(E_k)]
  =  \sigma_\ell Q_{\ell,k} \ , 
  \end{align*}
where the third equality holds since $\cL$ is self-adjoint for the $\sigma$-GNS inner product. It follows immediately from \eqref{modexp2B} that $\vec \sigma$ is invariant.

Finally, it is easy to check using the definition of the matrix $Q$ in terms of the generator $\cL$ that $\vec \rho(t)$ satisfies (\ref{modexp2B}) if and only if 
$\ddt \rho(t) = \cL^\dagger \rho(t)$. 
Ergodicity of the Markov chain $\widecheck \cP_t$ now follows from the ergodicity of $\cP_t$. 
\end{proof}

\section{Dirichlet form representation associated to a quantum Markov generator}\label{diffstruc}

Let $\H = L^2(X,\cB,\mu)$ be the space of square integrable real-valued functions on some probability space $(X,\cB,\mu)$. 
A closed and densely defined, symmetric non-negative bilinear form $\E$ on $\H$ defines a non-negative unbounded operator $-A$ through $\E(f,g) = - \langle f, Ag\rangle_{\H}$.  A special case ($\mu$ is a probability measure) of a theorem  of 
Beurling and Deny \cite{BD58,BD59} states that $P_t = e^{tA}$ is a Markov semigroup if and only if for all $f\in \H$,
$\E(\hat f,\hat f) \leq \E(f,f)$ where $\hat f$ denotes the projection of $f$ onto the closed convex set $\{ g\in \H\ :\ 0 \le g \leq 1 \ {\rm a.e.}\}$.

A powerful non-commutative extension of this theory has been developed starting with the early work of Gross \cite{Gr72,Gr75}, and continuing with \cite{AHK,Cip97,Cip08,CS03,DL92,GL93}. We shall not need the whole theory at present, but the Dirichlet form representation of the generator of a QMS will be useful to us. 
 
Let $\cP_t = e^{t\cL}$ be a QMS on $\aA$ that satisfies the $\sigma$-DBC for some $\sigma\in \Dens_+(\aA)$.
The generator $\cL$ can then be written in the canonical form specified in (\ref{genform2}) of Theorem~\ref{strucB}.
Throughout the rest of this section we fix such a generator $\cL$, and the sets $\{V_j\}_{j\in \cJ}$ and  $\{\omega_j\}$  that specify $\cL$ in the form (\ref{genform2}).  

Define operators $\partial_{j}$ on $\aA$ by
\begin{equation*}
\partial_{j}A = [V_j,A] \quad{\rm so\ that}\quad \partial_j^\dagger  A =  [V_j^*,A] \ .
\end{equation*}

The operators $\partial _j$ are derivations, and we may consider them as non-commutative analogs of partial derivatives associated to $\cL$. With respect to the Hilbert-Schmidt inner product, we may then form non-commutative analogs of the gradient, divergence and Laplacian associated to $\cL$. We begin with the Laplacian:

Given the set $\{V_j\}_{j\in \cJ}$, we define an operator $\cL_0$ on $\fH_\aA$ by
\begin{equation*}
\cL_0 A 
 = -\sum_{j\in \cJ}  \partial_j^\dagger \partial_jA 
 = -\sum_{j\in \cJ}  [V_j^*,[V_j,A]] \ .
\end{equation*}
Evidently $\cL_0^\dagger = \cL_0$, and we may write
${\displaystyle 
	\cL_0 A   
	= \sum_{j\in \cJ}  (V_j^*[A,V_j] + [V_j,A]V_j^*)}$.
Thus, by Theorem \ref{strucB}, $\cL_0$ is the generator of a quantum Markov semigroup $\cP_{0,t} = e^{t\cL_0}$ satisfying detailed balance with respect to $\tau$.
We call this semigroup the {\em heat semigroup} associated to $\cP_t = e^{t\cL}$, and the operator $\cL_0$ the {\em Laplace operator associated to $\cL$}.

We define the Hilbert space $\fH_{\aA,\cJ}$ by
$$\fH_{\aA,\cJ} = \bigoplus_{j\in \cJ} \fH_\aA^{(j)} \ ,$$
where each $\fH_\aA^{(j)}$ is a copy of $\fH_\aA$. 
For ${\bf A}\in \fH_{\aA,\cJ}$ and $j \in \cJ$, let $A_j$ denote the component of ${\bf A}$ in $\fH_\aA^{(j)}$. Thus, picking some linear ordering of $\cJ$, we can write
$${\bf A} = ( A_1, \dots, A_{|\cJ|})\ .$$
We equip $\fH_{\aA,\cJ}$ with the usual inner product
${ 
\langle {\bf A}, {\bf B}\rangle_{\fH_{\aA,\cJ}} = \sum_{j\in \cJ}\langle A_j,B_j\rangle_{\fH_\aA}}$.

Define an operator
$\nabla : \fH_\aA \to \fH_{\aA,\cJ}$ by
$$\nabla A = ( \partial_1, \dots, \partial_{|\cJ|}A)\ .$$
Thinking of elements of $\aA$ as non-commutative analogs of functions on a manifold, we may think of
${\bf A} = ( A_1, \dots, A_{|\cJ|})$ as a vector field. This point of view will be justified in the next section.
We define the operator $\dive : \fH_{\aA,\cJ}\to \fH_\aA$ by
$$\dive {\bf A} 
  = - \sum_{j\in \cJ}  \partial_j^\dagger A_j
  = \sum_{j\in \cJ}   [A_j,V_j^*] \ .$$
Note that $\dive$ is minus the adjoint of the map $\nabla: \fH_\aA \to  \fH_{\aA,\cJ}$, so that $\cL_0$ is negative semi-definite.
With these definitions, $\cL_0 = \dive \circ \nabla$. We call $\nabla$ the {\em non-commutative gradient} associated to $\cL$, and $\dive$ the {\em non-commutative divergence} associated to $\cL$.

\begin{remark}\label{cohomology} 
In this finite-dimensional setting, by elementary linear algebra,
\begin{equation}\label{null}
\big({\rm Null}(\dive)\big)^\perp = {\rm Ran}(\nabla)\ .
\end{equation}
In the terminology introduced above, elements of ${\rm Null}(\dive)$ are {\em divergence free vector fields}. 
Then (\ref{null}) says that a vector field ${\bf A}$ is a gradient if and only if it is orthogonal in $\fH_{\aA,\cJ}$ to every divergence free vector field. 
\end{remark}

The differential structure introduced above allows us to write the generator $\cL$ of a QMS in terms of a non-commutative Dirichlet form.

\begin{lm} For all $s \in [0,1]$, all $j \in \cJ$, and  all $A,B\in \aA$ we have
\begin{equation}\label{minadjm}
 \langle \partial_{j}B,A\rangle_s = \langle B, e^{s\omega_j}(e^{-\omega_j}V_j^*A - AV^*_j) \rangle_s \ .
\end{equation}
\end{lm}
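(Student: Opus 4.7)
The plan is a direct computation from the definitions. Expand the left-hand side using $\partial_j B = V_j B - B V_j$ and the formula $\langle X,Y\rangle_s = \tr[\sigma^s X^* \sigma^{1-s} Y]$:
\begin{equation*}
\langle \partial_j B, A\rangle_s = \tr[\sigma^s B^* V_j^* \sigma^{1-s} A] - \tr[\sigma^s V_j^* B^* \sigma^{1-s} A] \ .
\end{equation*}
The goal is to rewrite the right-hand side as $\tr[\sigma^s B^* \sigma^{1-s} (\cdots)]$ so that one can read off the second slot in the $\langle \cdot, \cdot\rangle_s$ inner product.

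The main tool is the commutation relation between $V_j^*$ and powers of $\sigma$. From $\Delta_\sigma V_j = e^{-\omega_j} V_j$ together with (\ref{conjeig}), one has $\Delta_\sigma V_j^* = e^{\omega_j} V_j^*$, that is $\sigma V_j^* = e^{\omega_j} V_j^* \sigma$. By the functional calculus (or analytic continuation in the exponent), this extends to
\begin{equation*}
\sigma^t V_j^* = e^{t \omega_j} V_j^* \sigma^t \qquad \text{for all } t \in \R \ .
\end{equation*}
I would use this twice: in the first term to push $V_j^*$ rightward past $\sigma^{1-s}$, producing a factor $e^{-(1-s)\omega_j}$; in the second term to push $V_j^*$ leftward past $\sigma^s$, producing a factor $e^{s\omega_j}$, and then apply cyclicity of the trace to bring $V_j^*$ to the far right.

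Carrying this out gives
\begin{equation*}
\langle \partial_j B, A\rangle_s = e^{-(1-s)\omega_j}\tr[\sigma^s B^* \sigma^{1-s} V_j^* A] - e^{s\omega_j}\tr[\sigma^s B^* \sigma^{1-s} A V_j^*] \ ,
\end{equation*}
and factoring $e^{s\omega_j}$ out of both terms (using $e^{-(1-s)\omega_j} = e^{s\omega_j}e^{-\omega_j}$) yields exactly $\langle B, e^{s\omega_j}(e^{-\omega_j} V_j^* A - A V_j^*)\rangle_s$. There is no real obstacle here; the only subtlety worth pausing over is justifying the analytic extension of the commutation relation from $t=1$ to arbitrary real $t$, which is immediate because $\sigma > 0$ and the identity is stable under taking polynomials and then closures/functional calculus in $\log \sigma$.
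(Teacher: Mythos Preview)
Your proof is correct and follows essentially the same approach as the paper's: both expand $\partial_j B$, then use the eigenvector relation for $V_j^*$ under $\Delta_\sigma$ (equivalently, the commutation $\sigma^t V_j^* = e^{t\omega_j} V_j^* \sigma^t$) to move $V_j^*$ past the powers of $\sigma$, and finish with cyclicity of the trace. The paper phrases the key step via $\Delta_\sigma^{s-1}(V_j^*)$ and $\Delta_\sigma^s(V_j^*)$ rather than the bare commutation relation, but this is only a notational difference.
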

 
 \begin{proof}  For any $A,B\in \cM_n(\C)$,
 \begin{eqnarray*}
 \langle \partial_{j}B,A\rangle_s &=& 
 \tr[\sigma^s (\partial_{j}B)^* \sigma^{1-s} A]= \tr[\sigma^s (V_jB - BV_j)^* \sigma^{1-s} A]\\
 &=&   
 \tr[\sigma^s B^*V_j^* \sigma^{1-s} A]  - \tr[\sigma^s V_j^*B^* \sigma^{1-s} A]\\
 &=&  \tr[\sigma^s B^* \sigma^{1-s}\Delta_\sigma^{s-1}(V_j^*) A]  - \tr[\Delta_\sigma^s(V_j^*)\sigma^s B^* \sigma^{1-s} A]\\
 &=&  e^{(s-1)\omega_j}\tr[\sigma^s B^* \sigma^{1-s} V_j^* A]  - e^{s\omega_j}\tr[V_j^*\sigma^s B^* \sigma^{1-s} A]\\
 &=& \bip{ B, e^{s\omega_j}(e^{-\omega_j}V_j^*A - AV_j^*) }_s \ ,
 \end{eqnarray*}
 where in the fourth line we have used (\ref{Vcom}). 
 \end{proof}
 
It follows from \eqref{minadjm} that for all $s\in [0,1]$, and all $A,B\in \aA$, 
 \begin{equation*} 
  e^{(1/2-s)\omega_j} 
  	\ip{ \partial_{j} B, \partial_{j} A }_s
	= - \bip{ B, e^{-\omega_j/2} V_j^* [A, V_j] 
		+ e^{\omega_j/2} [V_j, A] V^*_j }_s \ .
  \end{equation*}
Using the expression for $\cL$ given in \eqref{genform}, we obtain 
\begin{equation*}
  \cE_s(B,A)
  	=  - \ip{ B, \cL A }_s \ , 
\quad  	\text { where } \quad
  \cE_s(B,A) :=  \sum_{j\in \cJ}   e^{(1/2-s)\omega_j} 
     \ip{ \partial_{j} B, \partial_{j} A }_s \ .
  \end{equation*}
In particular, taking $s=1/2$, we see that 
  \begin{equation}\label{KMSdir}
  \cE_{1/2}(B,A)
  	=  - \ip{ B, \cL A }_{1/2} \ , 
\quad  	\text { where } \quad
  \cE_{1/2}(B,A) :=  \sum_{j\in \cJ}  
     \ip{ \partial_{j} B, \partial_{j} A }_{1/2} \ ,
  \end{equation}
which expresses $\cL$ in terms of a Dirichlet form in the sense of \cite{GL93,Cip97,Cip08}. 

As a simple consequence of the Dirichlet form representation, we state an ergodicity result.  
Recall that a QMS $\cP_t = e^{t\cL}$ is {\em ergodic} in case for each $t>0$, the $1$-eigenspace of $\cP_t$ is spanned by the identity, or, what is the same thing, the $0$-eigenspace of $\cL$ is spanned by the identity.  We refer to \cite{FV82} for an early study of ergodicity for quantum dynamical semigroups.

\begin{thm}\label{erg} 
Let $\cP_t = e^{t\cL}$ be QMS on $\aA$ that satisfies the $\sigma$-DBC for $\sigma\in \Dens_+(\aA)$.  
Let $\cL$ be given in the form (\ref{genform2}). 
Then the commutant of $\{V_j\}_{j\in \cJ}$ equals the null space of $\cL$.
In particular, $\cP_t$ is ergodic if and only if the commutant of $\{V_j\}_{j\in \cJ}$ is spanned by the identity.   
\end{thm}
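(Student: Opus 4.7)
The plan is to deduce both inclusions from the Dirichlet form representation established in equation~(\ref{KMSdir}), together with the fact that the KMS inner product is a genuine inner product (so only the zero element has vanishing KMS-norm).

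First I would note the easy inclusion. If $A$ lies in the commutant of $\{V_j\}_{j\in\cJ}$, then $[A,V_j]=0$ and $[V_j^*,A]=-[A,V_j^*]=0$ for every $j$, so reading off the formula~(\ref{genform2}) gives $\cL A = 0$ directly. Thus the commutant is contained in $\mathrm{Null}(\cL)$.

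For the reverse inclusion I would take $A \in \mathrm{Null}(\cL)$ and pair it against itself in the $\sigma$-KMS inner product. By~(\ref{KMSdir}),
\begin{equation*}
0 = -\ip{A,\cL A}_{1/2} = \cE_{1/2}(A,A) = \sum_{j\in\cJ} \ip{\partial_j A,\partial_j A}_{1/2}.
\end{equation*}
Since each term is non-negative, and since $\ip{C,C}_{1/2} = \tr[(\sigma^{1/4} C \sigma^{1/4})^*(\sigma^{1/4} C\sigma^{1/4})]$ vanishes only when $C=0$ (because $\sigma\in\Dens_+$ is invertible), we conclude that $\partial_j A = [V_j,A] = 0$ for every $j\in\cJ$. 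Hence $A$ belongs to the commutant of $\{V_j\}_{j\in\cJ}$.

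For the final assertion about ergodicity, I would observe that $\cP_t$ is ergodic precisely when $\mathrm{Null}(\cL) = \C\one$: this is the definition recalled just before the theorem, noting that $\cL\one = 0$ (since $\cP_t\one=\one$), so $\one$ always lies in $\mathrm{Null}(\cL)$. Combining this with the equality $\mathrm{Null}(\cL) = \{V_j\}_{j\in\cJ}'$ just proved gives the stated characterization. I do not expect any real obstacle here; the only delicate point is recognising that faithfulness of $\sigma$ makes $\ip{\cdot,\cdot}_{1/2}$ strictly positive definite, which is what converts the Dirichlet-form identity into the desired implication.
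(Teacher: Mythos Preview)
Your proof is correct and follows essentially the same route as the paper's: both directions are obtained from the Dirichlet form identity~(\ref{KMSdir}) together with positive definiteness of the KMS inner product. The only cosmetic difference is that the paper reads off the easy inclusion from~(\ref{genform}) (where $[V_j,A]=0$ alone suffices), whereas you use~(\ref{genform2}) and therefore implicitly rely on property~\textit{(iii)} of Theorem~\ref{strucB} to get $[V_j^*,A]=0$ as well; this is harmless.
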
 

\begin{proof} 
Suppose that $A$ belongs to the commutant of $\{V_j\}_{j\in \cJ}$. 
By definition, this means that $\partial_j A = 0$ for all $j \in \cJ$, and therefore $A \in {\rm Null}(\cL)$ by \eqref{genform}.

Conversely, if $\cL A = 0$, then by (\ref{KMSdir}),
$$
0   = - \ip{ A, \cL A }_{1/2} 
	= \sum_{j\in \cJ}  
		\ip{ \partial_{j} A, \partial_{j} A }_{1/2} \ , 
$$
which is the case if and only if $[V_j,A] = 0$ for all $j$. This means that $A$ belongs to the commutant of  $\{V_j\}_{j\in \cJ}$. 
\end{proof}

\begin{thm}\label{Poiss}  Let $\cP_t = e^{t\cL}$ be an ergodic QMS on $\aA$ that satisfies the $\sigma$-DBC for $\sigma\in \Dens_+(\aA)$, and let $\cL_0$ be the associated Laplacian.  
Then for given $B\in \fH_\aA$, the equation
\begin{equation*}
\cL_0 X = B
\end{equation*}
has a solution if and only if $\tau[B] =0$. 
Consequently, when $\tau[B] = 0$, there is a non-trivial affine subspace of $\fH_{\aA,\cJ}$ consisting of elements ${\bf A}$ for which $\dive{\bf A} = B$.
\end{thm}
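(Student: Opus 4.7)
The plan is to deduce the first claim from the Fredholm alternative applied to the self-adjoint operator $\cL_0$. As noted just after the definition of $\cL_0$, the identity $\cL_0 = -\sum_{j\in \cJ}\partial_j^\dagger \partial_j$ exhibits $\cL_0$ as self-adjoint and negative semi-definite on $\fH_\aA$ with respect to the $\tau$-GNS inner product, so the standard orthogonal decomposition in finite dimensions gives ${\rm Ran}(\cL_0) = {\rm Null}(\cL_0)^\perp$. It therefore only remains to identify ${\rm Null}(\cL_0)$.

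The key observation will be that $\cL_0$ is built from the same family $\{V_j\}_{j\in \cJ}$ as $\cL$ and itself generates a QMS satisfying the $\tau$-DBC. Applying Theorem~\ref{erg} to $\cL_0$ therefore identifies ${\rm Null}(\cL_0)$ with the commutant of $\{V_j\}_{j\in \cJ}$; on the other hand, applying Theorem~\ref{erg} to $\cL$ together with the ergodicity hypothesis on $\cP_t$ shows that this very commutant equals $\C \cdot \one$. Since $\langle \one, B\rangle_{\fH_\aA} = \tau[B]$, I conclude that
\begin{equation*}
{\rm Ran}(\cL_0) = (\C \cdot \one)^\perp = \{B \in \fH_\aA : \tau[B] = 0\}\ ,
\end{equation*}
which is the first claim.

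For the second claim, fix $B$ with $\tau[B] = 0$ and use the first part to pick $X \in \fH_\aA$ with $\cL_0 X = B$. Setting ${\bf A}_0 := \nabla X$ yields $\dive {\bf A}_0 = \cL_0 X = B$, so the full solution set of $\dive {\bf A} = B$ in $\fH_{\aA,\cJ}$ is the affine subspace ${\bf A}_0 + {\rm Null}(\dive)$. To check that this affine subspace is non-trivial, I invoke Remark~\ref{cohomology}: ${\rm Null}(\dive) = {\rm Ran}(\nabla)^\perp$. Using ${\rm Null}(\nabla) = \C\cdot\one$ (once more by ergodicity, since ${\rm Null}(\nabla)$ is exactly the commutant of $\{V_j\}_{j\in \cJ}$) and counting dimensions,
\begin{equation*}
\dim {\rm Null}(\dive) = |\cJ|\cdot \dim\fH_\aA - \big(\dim \fH_\aA - 1\big) = (|\cJ|-1)\dim \fH_\aA + 1 \geq 1 \ .
\end{equation*}
Hence the affine subspace has positive dimension and is non-trivial.

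Nothing in this argument is subtle; the only point that merits a moment's thought is the observation that Theorem~\ref{erg} applies equally to $\cL$ and to $\cL_0$, which is immediate from the fact that both generators are built with the same Lindblad operators $\{V_j\}_{j\in \cJ}$.
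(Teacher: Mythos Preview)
Your proof is correct and follows essentially the same route as the paper: both use the Fredholm alternative for the self-adjoint operator $\cL_0$ and identify ${\rm Null}(\cL_0)$ with the commutant of $\{V_j\}_{j\in\cJ}$, which is $\C\cdot\one$ by ergodicity and Theorem~\ref{erg}. The only cosmetic difference is that the paper gets ${\rm Null}(\cL_0)={\rm Null}(\nabla)$ directly from the quadratic-form identity $\langle A,\cL_0 A\rangle_{\fH_\aA}=-\|\nabla A\|^2_{\fH_{\aA,\cJ}}$ rather than by invoking Theorem~\ref{erg} a second time for $\cL_0$, and it leaves the non-triviality of the affine subspace implicit whereas you spell out the dimension count.
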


\begin{proof} 
Since $\langle A, \cL_0 A\rangle_{\fH_\aA} = - \langle \nabla A, \nabla A\rangle_{\fH_{\aA,\cJ}}$, we have ${\rm Null}(\cL_0) = {\rm Null}(\nabla)$.
Since $\cP_t$ is ergodic, it follows from Theorem~\ref{erg} that ${\rm Null}(\nabla)$ is spanned by $\one$. Since $\cL_0$ is self-adjoint on $\fH_\aA$,  the assertion now follows from the Fredholm alternative. 
\end{proof}

 The following  identity will be useful going forward.

\begin{lm}[Chain rule identity]\label{comid} For all $V\in \cM_n(\C)$, $\rho\in \Dens_+$ and $\omega\in \R$,
\begin{equation}\label{comideq}
\int_{0}^1 e^{\omega(s-1/2)} R_\rho\Delta_\rho ^s\Big(V \log(e^{-\omega/2}\rho) -  \log(e^{\omega/2}\rho)V\Big) \dd s = 
e^{-\omega/2} V \rho - e^{\omega/2} \rho V\ .
\end{equation}
\end{lm}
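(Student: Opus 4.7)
The plan is to reduce the left-hand side to a total derivative. First I would unpack $R_\rho \Delta_\rho^s$: since $R_\rho \Delta_\rho^s(X) = \rho^s X \rho^{1-s}$, and since $\log(e^{\pm \omega/2}\rho) = \pm\omega/2 + \log\rho$, the integrand becomes
\begin{equation*}
e^{\omega(s-1/2)} \Big(-\omega\, \rho^s V \rho^{1-s} + \rho^s V (\log\rho)\rho^{1-s} - \rho^s (\log\rho) V \rho^{1-s}\Big)\ ,
\end{equation*}
where I used that $\log\rho$ commutes with $\rho^{s}$ and $\rho^{1-s}$.

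Next, define $f(s) := \rho^s V \rho^{1-s}$. A direct differentiation gives
\begin{equation*}
f'(s) = (\log\rho)\rho^s V \rho^{1-s} - \rho^s V (\log\rho) \rho^{1-s} = \rho^s [\log\rho, V]\rho^{1-s}\ ,
\end{equation*}
so the two commutator terms above combine to $-f'(s)$. Thus the integrand equals
\begin{equation*}
e^{\omega(s-1/2)}\bigl(-\omega f(s) - f'(s)\bigr) = -\frac{\mathrm{d}}{\mathrm{d}s}\Bigl[e^{\omega(s-1/2)} f(s)\Bigr]\ .
\end{equation*}

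Finally, the fundamental theorem of calculus gives
\begin{equation*}
\int_0^1 -\frac{\mathrm{d}}{\mathrm{d}s}\Bigl[e^{\omega(s-1/2)} f(s)\Bigr]\dd s = e^{-\omega/2}f(0) - e^{\omega/2}f(1) = e^{-\omega/2} V\rho - e^{\omega/2}\rho V\ ,
\end{equation*}
which is exactly the right-hand side of \eqref{comideq}.

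There is no real obstacle here; the only care needed is bookkeeping with the sign of the commutator $[\log\rho, V]$ versus $[V,\log\rho]$ and with the two boundary terms picking up the factors $e^{\mp\omega/2}$ correctly. The identity is really just the statement that $s \mapsto e^{\omega(s-1/2)}\rho^s V \rho^{1-s}$ is a primitive of (minus) the integrand, once $\log(e^{\pm\omega/2}\rho)$ is split into its scalar and $\log\rho$ parts.
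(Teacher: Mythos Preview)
Your proof is correct and is essentially the same argument as the paper's: both recognize the integrand as a total derivative of $e^{\omega(s-1/2)}\rho^s V \rho^{1-s}$ (the paper uses the reparametrized function $f(s)=e^{\omega(1/2-s)}\rho^{1-s}V\rho^s$, which is your antiderivative evaluated at $1-s$). The only difference is cosmetic.
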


\begin{proof} 
Define $f(s) = e^{\omega(1/2-s)} \rho^{1-s} V \rho^s$. The right side of \eqref{comideq} equals $f(1) - f(0)$ and 
\begin{align*}
	f'(s)& = e^{\omega(1/2-s)} \rho^{1-s} 
			\Big( -\omega V - \log(\rho) V + V \log(\rho) \Big) \rho^s 
		 \\& = e^{\omega(1/2-s)} \rho^{1-s} 
			\Big(  V \log(e^{-\omega/2}\rho) -  \log(e^{\omega/2}\rho) V  \Big) \rho^s \ .
\end{align*}
Thus the left side of \eqref{comideq} equals $\int_0^1 f'(1-s) \dd s$, which yields the result.
\end{proof}

\begin{remark}\label{comidrem} Consider the function $f_\omega$ defined by
\begin{equation}\label{fomdef}
 f_\omega(t)
   := \int_0^1 e^{\omega(s-1/2)} t^s \dd s 
    = e^{\omega/2}\frac{t - e^{-\omega}}{\log t + \omega} \ .
\end{equation} 
Then (\ref{comideq}) can be formulated as
\begin{equation}\label{comideq2}
R_\rho f_\omega(\Delta_\rho)\Big(V \log(e^{-\omega/2}\rho) -  \log(e^{\omega/2}\rho)V\Big)  
  = e^{-\omega/2}V\rho - e^{\omega/2}\rho V\ .
\end{equation}

Notice that for $\omega = 0$,  \eqref{comideq} reduces to the commutator identity
\begin{equation}\label{comideq0}
R_\rho f_0(\Delta_\rho)([V, \log \rho]) = [V,\rho]\ .
\end{equation}
This identity provides a quantum analog of the classical identity for smooth, strictly positive probability densities $\rho(x)$ on $\R^n$:
\begin{equation}\label{classical-chain}
\rho(x) \nabla \log \rho(x) = \nabla \rho(x)\ .
\end{equation}
To see this, note that  if $A$ commutes with $\rho$, $f_\omega(\Delta_\rho)A =A$
so that for each $\omega$, the operation ${\displaystyle A \mapsto R_\rho f_\omega(\Delta_\rho) A}$ is one of the 
non-commutative interpretations of multiplication of $A$ by $\rho$. 
Now applying (\ref{comideq0}) with $V = V_j$, we have 
$R_\rho f_0(\Delta_\rho)(\partial_j  \log \rho) = \partial_j\rho$ 
for each $j$, which yields a quantum analog of (\ref{classical-chain}).
\end{remark}

Lemma~\ref{comid} and the previous remark motivate the following definition:

\begin{defi}\label{multrho}  For $\rho\in \Dens_+$, and $\omega\in \R$, define the operator $[\rho]_\omega : \cM_n(\C) \to \cM_n(\C)$ by 
\begin{equation}\label{multPdef}
[\rho]_\omega  = R_\rho \circ f_\omega(\Delta_\rho)
\end{equation}
For each $\omega$,  $[\rho]_\omega$, which is one of the non-commutative forms of  {\em multiplication by $\rho$}, is evidently invertible, and its inverse, $[\rho]_\omega^{-1} = (1/f_\omega)(\Delta_\rho)\circ R_{\rho^{-1}}$ may then be viewed as the corresponding non-commutative form of {\em division by $\rho$}. 
\end{defi}
We remark that $[\rho]_\omega^{-1}$ is a kernel operator that can be used to define a monotone metric on density matrices in the sense of \cite{Pe96}. However, the Riemannian metric on density matrices that we introduce in this paper will be different.

\begin{lm}\label{smooth}  For all $\omega\in \R$, the maps $\rho \mapsto [\rho]_\omega$ and   $\rho \mapsto [\rho]_\omega^{-1}$ are $C^\infty$ on $\Dens_+$. Furthermore, for all $A$,
\begin{equation}\label{starrho}
([\rho]_\omega A)^* =  [\rho]_{-\omega}A^*  \quad{\rm and\ consequently}\quad   ([\rho]^{-1}_\omega A)^* =  [\rho]^{-1}_{-\omega}A^*\ .
\end{equation}
\end{lm}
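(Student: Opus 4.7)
The plan is to split the lemma into two essentially independent pieces: smoothness of the two maps, and the two $*$-conjugation identities. Neither step is deep; the work is bookkeeping with the integral representation $f_\omega(t) = \int_0^1 e^{\omega(s-1/2)} t^s \dd s$ from \eqref{fomdef} and the functional calculus for $\Delta_\rho$.

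For smoothness, I will use the factorization $[\rho]_\omega = R_\rho\circ f_\omega(\Delta_\rho)$ directly. The map $\rho\mapsto R_\rho$ is linear, hence smooth. For the second factor, note that $\Delta_\rho = L_\rho\circ R_{\rho^{-1}}$, and since $\rho\mapsto \rho^{-1}$ is smooth on $\Dens_+$, the map $\rho\mapsto \Delta_\rho$ is smooth from $\Dens_+$ into the space of invertible positive operators on $\fH_\aA$. The spectrum of $\Delta_\rho$ is contained in $(0,\infty)$ and depends continuously on $\rho$, so by the holomorphic (or real-analytic) functional calculus, applying $f_\omega$ — which is real-analytic on $(0,\infty)$ because of the integral representation — yields a $C^\infty$ dependence on $\rho$. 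Thus $\rho\mapsto [\rho]_\omega$ is $C^\infty$. For the inverse, observe that $f_\omega(t) > 0$ for every $t>0$ (the integrand is strictly positive), so $g_\omega := 1/f_\omega$ is real-analytic on $(0,\infty)$, and
$$[\rho]_\omega^{-1} = g_\omega(\Delta_\rho)\circ R_{\rho^{-1}}$$
is smooth by exactly the same argument.

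For the $*$-identity, the key elementary fact is that $(\Delta_\rho^s A)^* = (\rho^s A\rho^{-s})^* = \rho^{-s} A^*\rho^s = \Delta_\rho^{-s} A^*$ for all real $s$. Inserting this into the integral representation and making the substitution $u=1-s$ gives
$$(f_\omega(\Delta_\rho) A)^* = \int_0^1 e^{\omega(s-1/2)} \Delta_\rho^{-s} A^* \dd s = \Delta_\rho^{-1} \int_0^1 e^{-\omega(u-1/2)} \Delta_\rho^{u} A^* \dd u = \Delta_\rho^{-1} f_{-\omega}(\Delta_\rho) A^* .$$
Since $\Delta_\rho^{-1} B = \rho^{-1} B\rho$ for any $B$, multiplying on the left by $\rho$ yields
$$([\rho]_\omega A)^* = \rho (f_\omega(\Delta_\rho) A)^* = (f_{-\omega}(\Delta_\rho) A^*) \rho = [\rho]_{-\omega} A^*,$$
which is the first identity in \eqref{starrho}. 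The second identity is a formal consequence: given $B\in \cM_n(\C)$, set $A = [\rho]_\omega^{-1} B$, so that $[\rho]_\omega A = B$. Applying the first identity gives $[\rho]_{-\omega} A^* = B^*$, i.e.\ $A^* = [\rho]_{-\omega}^{-1} B^*$, which is exactly $([\rho]_\omega^{-1} B)^* = [\rho]_{-\omega}^{-1} B^*$.

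The only point requiring any care is the interchange of $*$-conjugation with the integral defining $f_\omega(\Delta_\rho)$, which is legitimate because the integrand is a continuous matrix-valued function of $s\in[0,1]$ and conjugation is continuous and linear. No step presents a real obstacle.
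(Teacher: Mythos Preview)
Your proof is correct. The approach is close to the paper's but differs in one organizational choice worth noting. For smoothness, the paper rewrites $[\rho]_\omega$ directly in terms of the commuting operators $L_\rho$ and $R_\rho$ as $\int_0^1 (e^{-\omega/2}L_\rho)^s (e^{\omega/2}R_\rho)^{1-s}\dd s$, and then uses the logarithmic-mean identity $\int_0^\infty (t+\lambda)^{-1}(t+\mu)^{-1}\dd t = (\log\lambda - \log\mu)/(\lambda-\mu)$ to obtain an explicit resolvent-integral formula for $[\rho]_\omega^{-1}$, from which smoothness is read off via the resolvent identity. You instead keep the factorization $R_\rho\circ f_\omega(\Delta_\rho)$ and invoke the real-analytic functional calculus. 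Both are valid; the paper's route yields a concrete integral formula for the inverse that may be useful later, while your route is slightly quicker for the bare smoothness claim. For the $*$-identity the two arguments are essentially the same substitution $s\mapsto 1-s$, just written in the $(\Delta_\rho, R_\rho)$ variables rather than $(L_\rho, R_\rho)$.
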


\begin{proof} Recall the identities
\begin{equation}\label{intid}
\int_0^1 \lambda^{1-s}\mu^s \dd s 
 = \frac{\lambda - \mu}{\log \lambda - \log \mu} \tand
\int_0^\infty \frac{1}{(t+\lambda)(t+\mu)} \dd t 
 = \frac{\log \lambda - \log \mu}{\lambda - \mu} \ ,
\end{equation}
which hold for $\lambda, \mu > 0$.
By (\ref{fomdef}) and (\ref{multPdef}) we obtain
\begin{align}\label{eq:rho-box}
 [\rho]_\omega
  = R_\rho f_\omega(\Delta_\rho) 
  = \int_0^1 e^{\omega(1/2- s)} L_\rho^s R_\rho^{1-s} \dd s 
  =  \int_0^1  (e^{-\omega/2}L_\rho)^s (e^{\omega/2} R_\rho)^{1-s}  \dd s \ ,
\end{align}
and it follows from (\ref{intid}) that
\begin{align*}
	[\rho]_\omega^{-1}
	  = \int_0^\infty
	    (t + e^{-\omega/2}L_\rho)^{-1}  
(t+ e^{\omega/2} R_\rho)^{-1}  \dd t \ .
\end{align*}
The fact that $\rho\mapsto [\rho]_\omega^{-1}$ is $C^\infty$ now follows immediately from the resolvent identity, and then the $C^{\infty}$-differentiability of $\rho\mapsto [\rho]_\omega$ is clear. Moreover, (\ref{starrho}) follows from \eqref{eq:rho-box}.
\end{proof}

It is now a simple matter to write the quantum heat flow equation $\partial \rho/\partial t = \cL_0^\dagger\rho$ 
in a form that leads directly to its interpretation as gradient flow for the relative entropy with respect to the normalized trace: 
For all $\rho\in \Dens_+$,
\begin{equation*}
\cL_0^\dagger \rho = \dive ([\rho]_0 \nabla \log \rho) \ ,
\end{equation*}
where $[\rho]_0$ is applied to each component of $ \nabla \log \rho$.
As we show in the next section, it follows easily from this formula that there is a Riemannian metric on $\Dens_+$, which is a natural analog of the $2$-Wasserstein metric, such that the quantum heat flow on $\Dens_+$ associated to $\{V_j\}_{j\in \cJ}$ is a gradient flow for the relative entropy of $\rho$ with respect to the normalized trace $\tau$. For a comprehensive treatment of the theory of gradient flows with respect to the $2$-Wasserstein metric, see \cite{AGS} and \cite[Chapters 23-25]{V}.

In fact, the next lemma provides the means to extend this result to the general class of quantum Markov semigroups that satisfy detailed balance with respect to some non-degenerate state $\sigma$. 

\begin{lm} \label{lem:chain-lemma} Let $\cP_t = e^{t\cL}$ be QMS on $\aA$ that satisfies the 
$\sigma$-DBC for $\sigma\in \Dens_+(\aA)$. and let $\cL$ be given in the form (\ref{genform2}).
Then for all $\rho\in \Dens_+$, and all $j\in \cJ$,
\begin{equation*}
\partial_j (\log \rho - \log \sigma)  =  V_j \log(e^{-\omega_j/2}\rho) -  \log(e^{\omega_j/2}\rho)V_j\ .
\end{equation*}
\end{lm}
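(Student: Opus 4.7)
The plan is to expand both sides directly using $\partial_j = [V_j,\,\cdot\,]$ and scalarity of $\omega_j/2$, reducing the claim to the commutation relation $[V_j,\log\sigma] = \omega_j V_j$, which is the core content and follows from the eigenvector property $\Delta_\sigma V_j = e^{-\omega_j} V_j$ of Theorem~\ref{strucB}(iv).

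First, I would simplify the right-hand side. Since $\pm \omega_j/2$ is a scalar, $\log(e^{\pm\omega_j/2}\rho) = \pm\omega_j/2 + \log\rho$, so
\begin{equation*}
V_j \log(e^{-\omega_j/2}\rho) - \log(e^{\omega_j/2}\rho) V_j
  = V_j \log\rho - \log\rho\, V_j - \omega_j V_j
  = [V_j,\log\rho] - \omega_j V_j\ .
\end{equation*}
On the other hand, by the definition of $\partial_j$,
\begin{equation*}
\partial_j(\log\rho - \log\sigma) = [V_j,\log\rho] - [V_j,\log\sigma]\ .
\end{equation*}
Comparing the two expressions, the lemma is equivalent to the identity $[V_j,\log\sigma] = \omega_j V_j$.

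For this commutation relation, I would invoke Theorem~\ref{strucB}(iv): $\Delta_\sigma V_j = e^{-\omega_j} V_j$, and hence by functional calculus $\sigma^s V_j \sigma^{-s} = e^{-s\omega_j} V_j$ for every $s\in\R$. Differentiating at $s=0$ gives $[\log\sigma, V_j] = -\omega_j V_j$, i.e.\ $[V_j,\log\sigma] = \omega_j V_j$, which completes the reduction.

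I do not expect any serious obstacle: the whole proof is essentially a one-line unpacking of the eigenvalue equation for $\Delta_\sigma$ together with scalar bookkeeping of the $\omega_j/2$ shifts. The only point requiring minor care is making sure the signs in the intertwining relation for $\sigma^s$ match those in the statement, which is why I would write out the step $\sigma^s V_j = e^{-s\omega_j} V_j \sigma^s$ explicitly before differentiating.
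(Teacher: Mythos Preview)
Your proof is correct and essentially identical to the paper's own argument: both reduce the identity to $[V_j,\log\sigma]=\omega_j V_j$ and obtain this by differentiating $\Delta_\sigma^s V_j = e^{-s\omega_j}V_j$ at $s=0$. The only cosmetic difference is the order of exposition.
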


\begin{proof}  
By (\ref{Vcom}) we have $\Delta_\sigma^s V_j  = e^{-s\omega_j}V_j$, and thus $[V_j,\log \sigma] = - \partial_s|_{s=0}  \Delta_\sigma^s V_j = \omega_j V_j$.
It follows that
\begin{align*}
\partial_j (\log \rho - \log \sigma)  
   = [V_j,\log \rho] - \omega_j V_j 
   =  V_j \log(e^{-\omega_j/2}\rho) 
   	   -  \log(e^{\omega_j/2}\rho) V_j \ ,
\end{align*}
which is the desired identity.
\end{proof}

\begin{thm}\label{gradflowent}  Let $\cP_t = e^{t\cL}$ be QMS on $\aA$ that satisfies the $\sigma$-DBC for $\sigma\in \Dens_+(\aA)$, and let $\cL$ be given in the form (\ref{genform2}). Then, for all $\rho\in \Dens_+$, 
\begin{equation*}
- \cL^\dagger \rho = \sum_{j\in \cJ}  \partial_j^\dagger \Big( [\rho]_{\omega_j} \partial_j(\log \rho - \log \sigma)\Big)\ .
\end{equation*}
\end{thm}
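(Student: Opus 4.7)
The plan is to directly compute the right-hand side, working term by term in $j$, and match it against the explicit form of $\cL^\dagger \rho$ in Remark~\ref{rmk:dual}. All the necessary ingredients are already in place; the proof is essentially a chain of three rewritings.

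First, I would fix $j \in \cJ$ and use Lemma~\ref{lem:chain-lemma} to rewrite the argument of $[\rho]_{\omega_j}$:
\begin{equation*}
\partial_j(\log \rho - \log \sigma)
   = V_j \log(e^{-\omega_j/2}\rho) - \log(e^{\omega_j/2}\rho) V_j \ .
\end{equation*}
This is precisely the expression that appears on the left-hand side of the chain rule identity \eqref{comideq2}, with $V = V_j$ and $\omega = \omega_j$. Since by definition $[\rho]_{\omega_j} = R_\rho \circ f_{\omega_j}(\Delta_\rho)$, applying $[\rho]_{\omega_j}$ and invoking \eqref{comideq2} collapses the logarithms completely:
\begin{equation*}
[\rho]_{\omega_j} \partial_j(\log \rho - \log \sigma)
   = e^{-\omega_j/2} V_j \rho - e^{\omega_j/2} \rho V_j \ .
\end{equation*}

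Next, I would apply $\partial_j^\dagger = [V_j^*,\,\cdot\,]$ to this expression and use the antisymmetry of the commutator to write
\begin{equation*}
\partial_j^\dagger\Big([\rho]_{\omega_j} \partial_j(\log \rho - \log \sigma)\Big)
   = -e^{-\omega_j/2} [V_j \rho, V_j^*] - e^{\omega_j/2} [V_j^*, \rho V_j] \ .
\end{equation*}
Summing over $j \in \cJ$ and negating yields exactly the expression \eqref{eq:adj-2} for $\cL^\dagger \rho$ recorded in Remark~\ref{rmk:dual}, which proves the theorem.

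The only genuinely non-trivial step is recognizing that Lemma~\ref{lem:chain-lemma} produces the precise combination of logarithms that the chain rule identity (Lemma~\ref{comid}) is designed to absorb; once this matching is noticed the rest is algebraic bookkeeping. There is no real obstacle, since all the heavy lifting has been done in preparing $[\rho]_\omega$ as the operator-theoretic version of ``multiplication by $\rho$'' that inverts the logarithmic derivative. In particular, no further analysis of $\Delta_\sigma$, of the Bohr frequencies $\omega_j$, or of the detailed balance structure is needed beyond what has already entered through the canonical form \eqref{genform2} and Lemma~\ref{lem:chain-lemma}.
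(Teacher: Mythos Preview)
Your proof is correct and follows essentially the same route as the paper's own argument: apply Lemma~\ref{lem:chain-lemma}, then the chain rule identity \eqref{comideq2}, then expand $\partial_j^\dagger$ and compare with \eqref{eq:adj-2}. The only difference is presentational --- the paper compresses these three rewritings into a single displayed chain of equalities.
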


\begin{proof}
Using Lemma \ref{lem:chain-lemma} and \eqref{comideq2} we obtain
\begin{align*}
	\sum_{j\in \cJ}  \partial_j^\dagger \Big( [\rho]_{\omega_j} \partial_j(\log \rho - \log \sigma)\Big)
	& = 	\sum_{j\in \cJ}  \partial_j^\dagger \Big( [\rho]_{\omega_j} \Big( 
	V_j \log(e^{-\omega_j/2}\rho) -  \log(e^{\omega_j/2}\rho)V_j
	\Big)\Big) 
\\	& = 	\sum_{j\in \cJ}  \partial_j^\dagger \Big(  e^{-\omega_j/2}V_j\rho - e^{\omega_j/2}\rho V_j\Big)
\\& = -  \sum_{j \in \cJ}  \Big( e^{-\omega_j/2} [V_j \rho,  V_j^*] 
	   + e^{\omega_j/2} [V_j^*, \rho V_j] \Big) = - \cL^\dagger \rho	\ ,
\end{align*}
where the final identity follows from \eqref{eq:adj-2}.
\end{proof}

\section{Examples}\label{sec:examples}

\subsection{The infinite-temperature Fermi Ornstein-Uhlenbeck semigroup}
\label{sec:inf-temp-Fermi}
As Segal emphasized \cite{S56}, the Fermion number operator for $n$ degrees of freedom can be represented in terms of the generators of a Clifford algebra. This permits the semigroup it generates to be realized as a QMS, a fact that was effectively exploited by Gross \cite{Gr72,Gr75} using Segal's non-commutative integration theory \cite{S53,S65}.

  Let $\{Q_1, \ldots, Q_n\}$ be self-adjoint operators on a finite-dimensional Hilbert space satisfying the \emph{canonical anti-commutation relations} (CAR):
\begin{align*}
 Q_j Q_k + Q_k Q_j = 2 \delta_{jk} \one \;.
\end{align*}
The \emph{Clifford algebra} $\Cln$ is the $2^n$-dimensional algebra generated by $\{Q_j\}_{j=1}^n$. 
Let $\Gamma : \Cln \to \Cln$ be the principle automorphism on $\Cln$, i.e., the unique algebra 
isomorphism satisfying $\Gamma(Q_j) = - Q_j$ for all $j$. The product of all of the generators 
$Q_1Q_2\cdots Q_n$ (in some order) is evidently unitary,
and the CAR imply that it commutes with each $Q_j$ if $n$ is odd, and anti-commutes with each $Q_j$ if $n$ is even. 
Hence when $n$ is odd, the center of $\Cln$ is non-trivial and $\Cln$ is not a factor. 

In the even case $n= 2m$,  form the $m$ self-adjoint unitary operators $i Q_{2j-1}Q_{2j}$, $j=1,\dots n$.
These all commute with one another, and we define
\begin{equation*}
W = i^mQ_1Q_2\cdots Q_{2m}\ .
\end{equation*}
Evidently $W\in \Cln$ is unitary and self-adjoint, and since $W$ anti-commutes with each $Q_j$, 
 the  principle automorphism is inner and is given by
\begin{equation}\label{innerW3}
\Gamma(A)  = WAW = W^*AW = WAW^*  \qquad{\rm for\ all \ } A\in \Cln\ .
\end{equation}

Let $\{0,1\}^n$ be the set of {\em fermion multi-indices}, and for all $\aa = (\alpha_j)_j \in \{0,1\}^n$,
define $Q^\aa = Q_1^{\alpha_1} \cdots Q_n^{\alpha_n}$ and $|\aa| = \sum_{j=1}^n \alpha_j$. 
Let $\tau$ be the canonical trace on $\Cln$, 
determined by $\tau(Q^\aa) := \delta_{0,|\aa|}$.
In a standard representation of $\Cln$ as an algebra of operators on $(\C^2)^{\otimes n}$ due to Brauer and Weyl, $\tau$ is simply the normalized trace. See \cite{CL93,CM13} for references and further background.

Gross \cite{Gr75} defined a differential structure and a Dirichlet form on $\Cln$ as follows: 
For $j\in \{1,\dots,n\}$, let $\sder_j$ be given by
\begin{equation}\label{grossder}
\sder_j (A) =  \frac12(  Q_j A - \Gamma(A) Q_j )\ .
\end{equation}
Each $\sder_j$ is a {\em skew derivation}. That is, for all $A,B\in \Cln$,
$\sder_j(AB) = (\sder_j A)B + \Gamma(A) \sder_j B$.
Gross defined the Dirichlet form $\mathcal{E}(A,B)$ on $\Cln$ by
$\mathcal{E}(A,B)  = \sum_{j=1}^n \tau[ (\sder_j A)^* \sder_j B]$, and defined the operator $\cL$ on $\Cln$ by 
$-\tau[A^* \cL B] = \mathcal{E}(A,B)$. 
Simple computations show that for all $A\in \Cln$, 
\begin{equation}\label{gross}
\sder^\dagger A =  \frac12\big(  Q_j A + \Gamma(A) Q_j \big) \tand  \cL A = \frac12 \sum_{j=1}^n \big(Q_j \Gamma(A) Q_j - A\big)\;.
\end{equation}
It is evident that $\cL Q^\aa= -|\aa|Q^\aa$, hence $-\cL$ is the {\em fermion number operator}.

When $n$ is even, so that $\Gamma(A) = W^*AW$,  we define $V_j = iWQ_j$, so that each 
$V_j$ is both self-adjoint and unitary. Then, using the fact that $V_j^2 = \one$, we may rewrite
 (\ref{gross}) as
\begin{equation*}
 \cL A = \frac12 \sum_{j=1}^n \big(V_j A V_j - A\big)
 	   = - \frac14 \sum_{j=1}^n [V_j,[V_j,A]] \ ,
\end{equation*}
which has the form (\ref{genform3}).
Thus, $e^{t\cL}$ is a QMS satisfying the $\tau$-DBC.  Gross discussed  this  QMS as a fermionic analog of the classical Ornstein-Uhlenbeck semigroup; we refer to it as the {\em infinite temperature  Fermi Ornstein-Uhlenbeck semigroup}.  
One good reason is that it  is generated by the negative of the fermionic number operator. 
Another is that as conjectured by Gross \cite{Gr75} and proved in \cite{CL93}, it has the same optimal hypercontractivity properties as the classical Ornstein-Uhlenbeck semigroup. We shall further develop the analogy here. 
(The infinite temperature part of the name will be justified in the next example.)

To relate the differential structure in (\ref{grossder}) to the one considered here, note that when $n$ is even, so that $\Gamma$ is the inner automorphism given by (\ref{innerW3}), we have, with $V_j$ defined as above, 
\begin{equation}\label{sder1}\sder_jA  = \frac{1}{2i} W[V_j,A]  =  \frac{1}{2i} W \partial_jA  \tand 
\sder_j^\dagger A =   -\frac{1}{2i} \partial_j^\dagger (WA) = -\frac{1}{2i} \partial_j(WA)
\end{equation}
for $j=1,\dots,n$.
Thus, Gross's differential structure in terms of skew derivations may be substituted with 
the present differential structure in terms of derivations when $n$ is even. When $n$ is odd, this is 
achieved by embedding $\Cln$ is an a larger Clifford algebra: One can add one more generator, or, perhaps better, embed the Clifford algebra generated by $\{Q_1,\dots,Q_n\}$, in the 
{\em phase space} Clifford algebra with $2n$ generators
$\{Q_1,\dots,Q_n,P_1,\dots,P_n\}$ that is discussed next.

\subsection{The finite-temperature Fermi Ornstein-Uhlenbeck semigroup}

Let $\aA$ be the Clifford algebra  $\Cln$ of dimension $n=2m$ for some $m\in \N$. Consider a set of generators 
$$\{Q_1,\dots,Q_m,P_1,\dots,P_m\}\ ,$$
where
$$Q_j Q_k + Q_k Q_j =  P_j P_k + P_k P_j = 2\delta_{j,k}\one 
\tand Q_j P_k + P_k Q_j = 0 \quad{\rm for\ all}\ 1 \leq j,k \leq m\ .$$
We think of $\aA$ as the full set of {\em phase space observables}, the subalgebra generated by $\{Q_1,\dots,Q_m\}$ as the algebra of {\em configuration space observables}, and the subalgebra generated by $\{P_1,\dots,P_m\}$ as the algebra of {\em momentum space observables}.

Form the operators
\begin{equation*}
Z_j   = \frac{1}{\sqrt{2}}(Q_j + i P_j)\quad  {\rm so\ that}\quad 
Z_j^* = \frac{1}{\sqrt{2}}(Q_j - i P_j)\ .
\end{equation*}
It is easy to check that 
\begin{equation}\label{createanihilate2}
Z_j Z_k + Z_k Z_j =  0 \quad \text{and} \quad
Z_j Z_k^* + Z_k^* Z_j = 2\delta_{j,k}\one \quad{\rm for \ all}\ 1\leq j,k \leq m\ \ .
\end{equation}
Consider the complementary orthogonal projections $N_j$ and $N_j^\perp$ defined by
\begin{equation*}
N_j = \frac12 Z_j^*Z_j \tand 
N_j^\perp = \frac12  Z_jZ^*_j 
\quad{\rm for\ all}\ \ 1\leq j \leq m\ \ .
\end{equation*}
Then (\ref{createanihilate2}) implies that
\begin{equation}\label{createanihilate4}
 Z_j N_j = N_j^\perp Z_j = Z_j  \tand
 N_j Z_j = Z_j N_j^\perp = 0 \ .
\end{equation}
Note also that
\begin{align}\label{createanihilate4-also}
	 Z_j N_k = N_k Z_j \tand
	 Z_j N_k^\perp = N_k^\perp Z_j \quad{\rm for\ all}\ \ j \neq k\ .
\end{align}
Moreover, $\{N_1,\dots,N_m,N_1^\perp ,\dots,N_m^\perp\}$ is a set of commuting orthogonal projections. 

For each $j$, $Q_jP_j$ commutes with both $Q_k$ and $P_k$ for all $k\neq j$. Hence the operators
$\{Q_1P_1,\dots,Q_mP_m\}$ all commute with one another.  As in the previous example, let $W = i^m \prod_{j=1}^mQ_jP_j$ so that $W$ is self-adjoint and unitary, and for all $A\in \aA$, let $\Gamma(A) = WAW$.
 Note that $Q_jP_jZ_j =  iZ_j$ for each $j$.

For any set of $m$ real numbers $\{e_1,\dots,e_m\}$, and any parameter 
$\beta \in (0,\infty)$, to be interpreted as the {\em inverse temperature}, define the {\em free Hamiltonian} $h$ and the {\em Gibbs state} $\sigma_\beta$ by

 \begin{equation*}
h = \sum_{j=1}^m e_j N_j \tand \sigma_{\beta} = \frac{1}{\tau[e^{-\beta h}]} e^{-\beta h}\ .
\end{equation*}
where $\tau$ is the canonical trace as in Section \ref{sec:inf-temp-Fermi}.

Since the $N_j$ are commuting orthogonal projections, $e^{-\beta h}$ is the product, in any order, of the operators $e^{-\beta e_j}N_j + N_j^\perp$.
Therefore, for each $1\leq j \leq m$,
\begin{align*}
\Delta_{\sigma_\beta}(Z_j) = (e^{-\beta e_j}N_j + N_j^\perp)Z_j (e^{\beta e_j}N_j + N_j^\perp) 
  = e^{\beta e_j } Z_{j} \ ,	
\end{align*}
where we have used \eqref{createanihilate4} and \eqref{createanihilate4-also}. Consequently, $\Delta_{\sigma_\beta}(Z_j^*) = e^{-\beta e_j } Z_j^{*}$.
Since $W$ commutes with every even element of $\aA$, it follows that
 \begin{equation*}
 \Delta_{\sigma_\beta}(W Z_j)  = e^{\beta e_j} W Z_j \tand 
 \Delta_{\sigma_\beta}(Z_j^*W) = e^{-\beta e_j }Z_j^*W \ .
 \end{equation*}

Define the operators
 \begin{equation*}
 V_j = WZ_j , \qquad 1 \leq j \leq m\ ,
 \end{equation*} 
so that $\frac12 V_j^* V_j = N_j$ and  $\frac12 V_j V_j^* = N_j^\perp$.
Then $\{V_1,\dots,V_m, V_1^*,\dots,V_m^*\}$ is set of operators on $\aA$ satisfying the conditions {\it (i)},  {\it (ii)}, {\it (iii)} and {\it (iv)} of Theorem~\ref{strucB}. 
Therefore, the operator $\cL_\beta$ defined by 
\begin{equation}\label{Lbeta}
\cL_\beta A =  \frac14 \sum_{j =1}^m \left[e^{\beta e_j /2} \Big( V_j^* [A, V_j] +  [V_j^* ,A] V_j \Big) 
	   + e^{-\beta e_j/2} \Big( V_j [A, V_j^*] +  [V_j , A] V_j^* \Big)\right] 
\end{equation}
is the generator of a QMS $\cP_t = e^{t\cL_\beta}$ that satisfies the $\sigma_\beta$-DBC. 

It is a simple matter to diagonalize $\cL_\beta$:
For each $1\leq j \leq m$, define the four operators
\begin{equation*}
K_{j,(0,0)} = \one\ ,\quad  K_{j,(1,0)} = Z_j \ ,\quad  K_{j,(0,1)} = Z^*_j \tand K_{j,(1,1)}
 = e^{\beta e_j/2}N_j - e^{-\beta e_j/2}N_j^\perp\ .
\end{equation*}
One readily checks that this set of four operators is orthonormal in any of the inner products $\langle \cdot, \cdot\rangle_s$ based on $\sigma_\beta$.

Using the fact that for each $j$, $V_j$ and $V_j^*$ commute with $P_k$ and $Q_k$ for all $k\neq j$, and using the identities $V_j K_{j,(1,1)} = e^{\beta e_j/2} V_j$ and $K_{j,(1,1)} V_j = - e^{-\beta e_j/2} V_j$ , we readily compute that
\begin{equation}\label{krchuk2}
\cL_\beta Z_j = - \cosh(\beta e_j/2) Z_j \tand
\cL_\beta K_{j,(1,1)} = -2\cosh(\beta e_j/2)K_{j,(1,1)}\ .
\end{equation}
Therefore, for all $0 \leq k,\ell \leq 1$,
\begin{equation*}
\cL_\beta K_{j,(k,\ell)} = -(k+\ell) \cosh(\beta e_j/2)K_{j,(k,\ell)} \ .
\end{equation*}
Let $\aa = (\alpha_1,\dots, \alpha_m)$ denote a generic element of the index set $\{ \{0,1\}\times \{0,1\}\}^m$,
and for $\alpha = (k,\ell) \in \{0,1\}\times \{0,1\}$, define $|\alpha| = k +\ell$.  Then the functions
\begin{equation*}
K_\aa :=  K_{1,\alpha_1}K_{2,\alpha_2} \cdots K_{m,\alpha_m}
\end{equation*}
are an orthogonal (but not normalized) basis for $\fH_\aA$ consisting of eigenvectors of $\cL_\beta$:
\begin{equation}\label{krchuk5}
\cL_\beta K_\aa  = - \left(\sum_{j=1}^m |\alpha_j|\cosh(\beta e_j/2)\right)K_\aa\ .
\end{equation}

It is now easy to check that in the infinite temperature limit (i.e., $\beta \to 0$), 
$\lim_{\beta\to 0} \cL_\beta = \cL_0$ where
\begin{eqnarray*}
\cL_0 A 
	   &=& - \frac14  \sum_{j =1}^m\left(  [V_j,[V_j^*,A]]  + [V_j^*,[V_j,A]]\right)\nonumber\\
	   	   &=&   \frac12 \sum_{j =1}^m\left(  Q_j\Gamma(A)Q_j  + P_j\Gamma(A)P_j^*  - 2A\right) \ ,
\end{eqnarray*}
From the previous example, we recognize $\cL_0$ as the negative of the number operator on $\aA =\Cln$. That is,
in the infinite temperature limit ($\beta\to 0$), we recover the infinite temperature 
Fermi Ornstein-Uhlenbeck semigroup, justifying our nomenclature. 

As in the infinite temperature case, there is a differential calculus that is more closely adapted to $\cL_\beta$: 
For $1\leq j \leq m$, define the operators
\begin{equation*}
\sder_j A = \frac12(Z_jA - \Gamma(A)Z_j) = \frac12 W[V_j,A] \tand 
\overline{\sder}_j A = \frac12(Z_j^*A -\Gamma(A)Z_j^*) = -\frac12 W[V_j^*,A]
\end{equation*}
We readily compute that
\begin{equation}\begin{aligned}\label{gder2}
\sder_j K_{j,(0,0)}  = \sder_j K_{j,(1,0)} = 0\ , \quad \sder_j K_{j,(0,1)} &= K_{j,(0,0)} \\
\tand  \sder_j K_{j,1,1} &  = \cosh(\beta e_j/2)K_{j,(1,0)} \ ,
\end{aligned}\end{equation}
and that 
\begin{equation}\begin{aligned}\label{gder3}
\overline \sder_j K_{j,(0,0)}= \overline \sder_j K_{j,(0,1)} = 0\ , \quad  
\overline\sder_j K_{j,(1,0)} & = K_{j,(0,0)} \\ \tand  \overline \sder_j K_{j,(1,1)} & = - \cosh(\beta e_j/2)K_{j,(0,1)} \ .
\end{aligned}\end{equation}
Again using the fact that for each $j$, $V_j$ and $V_j^*$ commute with
$P_k$ and $Q_k$ for all $k\neq j$, one determines the effect of $\sder_j$ and $\overline \sder_j$ on all of $\aA$. 
The orthonormal basis $\{K_\aa\}$ may be viewed as consisting of analogs of multivariate Krawtchouck polynomials -- the discrete analogs of the Hermite polynomials. 
The differential operators $\sder_j$ and $\overline \sder_j$, which are skew derivations as in the infinite temperature case, have the advantage over the closely related derivations $\partial_j A = [V_j,A]$ and $\overline\partial_j A = [V_j^*,A]$ that they always lower the ``degree'' of any $K_\aa$ by one, 
as one would expect. 
The operators 
$\partial_j A$ and $\overline\partial_j A$ do not do this. 

Using (\ref{gder2}) and (\ref{gder3}) one readily deduces the identities, valid for all
\begin{equation}\label{fermint1}
\sder_j \cL_\beta K_\aa - \cL_\beta \sder_j K_\aa = - \cosh(\beta e_j/2) \sder_j K_\aa 
\end{equation}
and
\begin{equation}\label{fermint2}
\overline \sder_j \cL_\beta K_\aa - \cL_\beta \overline \sder_j K_\aa = - 
\cosh(\beta e_j/2) \overline \sder_j K_\aa\ .
\end{equation}
Finally, we observe that each of the vectors $K_\aa$ is an eigenvector of $\Delta_{\sigma_\beta}$. Moreover, it is easy to see that if $\{e_1,\dots, e_m\}$ is linearly independent over the integers,
then $\Delta_{\sigma_\beta} K_\aa = K_\aa$ if and only if for each $k$, $|\alpha_k| \neq 1$. 
The span of the set of such $K_\aa$ is the same as the span of
\begin{equation}\label{modularFE}
\{N_1,N_1^\perp, \dots, N_m, N_m^\perp\}\ .
\end{equation}
Hence in this case, the modular algebra $\aA_{\sigma_\beta}$ is the algebra generated by the commuting projections in (\ref{modularFE}). 
Let us denote this algebra, which does not depend on $\beta$, by $\aB$.  
While it need not be the modular algebra when $\{e_1,\dots, e_m\}$ is not linearly independent over the integers, it is easy to see (by continuity or computation) that it is always invariant under $\cP_t$. 
 
The projections in (\ref{modularFE}) are not minimal in $\aB$, but the set of the $2^{m}$ distinct non-zero products one can form from them is a full set of minimal projections. 
We may identify this set with the discrete hypercube $\cQ^m = \{0, 1\}^m$. 
Set $\cJ = \{1, \ldots, m\}$, and let $s_j:\cQ^m \to \cQ^m$ define the $j$-th coordinate swap defined by $s_j (x_1,  \ldots, x_m) = (x_1, \ldots, - x_j, \ldots, x_m)$. 
Let $\bx$ denote a generic point of $\cQ^m$. 
Define ${\displaystyle E_\bx = \prod_{j=1}^m  N_j^{x_1}(N_j^\perp)^{1-x_1}}$.
The restriction $\widecheck \cP_t$ of $\cP_t$ to $\aB$ is a nearest neighbor random walk on $\cQ^m$ with transition rates that are readily computed using Theorem~\ref{restrict}. 
 
For a standard representation in which the elements of $\aA$ operate on $\C^{2^m}$, and $\tau$ is the normalized trace, each $E_\bx$ is rank one, so that the transition rate matrix $D$ defined in (\ref{modexp2}) is simply $D_{\bx,\bx'} = \tr[ E_\bx, \cL E_{\bx'}]$.
Using (\ref{krchuk2}) through (\ref{krchuk5}), one readily computes that $D_{\bx,\bx'} = 0$ unless $\bx' = s_j(\bx)$ for some $j$, and in that case
$$
D_{\bx,\bx'} = \begin{cases} {\displaystyle \frac{2\cosh(\beta e_j)}{1 + e^{-\beta e_j}}} & x_j =1\\
{\displaystyle \frac{2\cosh(\beta e_j)}{1 + e^{\beta e_j}}} & x_j =0 \ ,
\end{cases}
$$
and this gives the jump rates along the edges of $\cQ^m$ for the classical Markov chain corresponding to $\widecheck \cP_t$.

\subsection{The Bose Ornstein-Uhlenbeck semigroup}

A set $\{Q_1,\dots,Q_m,P_1,\dots,P_m\}$ of self-adjoint operators on a Hilbert space $\H$ is a representation of the {\em Canonical Commutation Relations} (CCR), in case for all $1 \leq j,k \leq m$,
\begin{equation*}
[Q_j,Q_k] = 0\ , \quad [P_j,P_k] = 0 \tand [Q_j,P_k] = i\delta_{j,k}\one \ .
\end{equation*}
All representations of the CCR are necessarily infinite-dimensional, since otherwise we would have $\tr[[Q_j,P_j]]=0$ which is incompatible with $[Q_j,P_j] =i\one$. The CCR algebra is the $C^*$-algebra generated by the unitaries
$\{e^{-t_1Q_1},\dots,e^{it_mQ_m},e^{-s_1P_1},\dots,e^{is_mP_m}\}$ for all $t_1,\dots,t_m,s_1,\dots, s_m$, or, what is the essentially the same thing, the Weyl operators.  
Not only is $\H$ necessarily infinite-dimensional, but the operators $\{Q_1,\dots,Q_m,P_1,\dots,P_m\}$ are unbounded.

Therefore, the CCR algebra for $m$ Bose degrees of freedom lies outside the scope of the theory being developed in this paper. 
However, even without fully extending this theory to infinite dimensions, we shall be able to deduce new results for an important QMS on $\aA$, namely the Bose Ornstein-Uhlenbeck semigroup. 

To keep things simple in this excursion into the infinite-dimensional case, we take $m=1$. 
Exactly as in the Fermi case, we form the operators
\begin{equation*}
Z = \frac{1}{\sqrt{2}}(Q+ i P)\quad  {\rm so\ that}\quad Z^* = \frac{1}{\sqrt{2}}(Q - i P)\ .
\end{equation*}
It is easy to check that 
\begin{equation}\label{createanihilate2B}
[Z,Z^*] = \one \ .
\end{equation}

In one standard representation that we may as well fix here, $\H = L^2(\R,\gamma(x){\rm d}x)$ where $\gamma(x) = (2\pi)^{-1/2}e^{-x^2/2}$ and $Z = \partial /\partial x$. 
Then a simple computation shows that $Z^* = x - \partial /\partial x$, and (\ref{createanihilate2B}) is satisfied. 
Define the Hamiltonian $h$ by $h = Z^*Z$. 
It is evident that for each $k$, the linear space of polynomials in $x$ of degree at most $k$ is invariant under $h$. 
Since $h$ is self-adjoint, this means the eigenfunctions of $h$ are orthogonal polynomials in $\H$, and hence are the Hermite polynomials. 
It is well known and easy to check that the $k$th Hermite polynomial is an eigenfunction of $h$ with eigenvalue $k$. 
That is, $h$ is the Bose number operator (for one degree of freedom). 
Fixing an inverse temperature $\beta\in (0,\infty)$, we define $\sigma_\beta$ as 
\begin{equation*}\label{bose1}
\sigma_\beta = \Big(\tr\big[ e^{-\beta h}\big]\Big)^{-1}e^{-\beta h} \ .
\end{equation*}
Note that $\tr\left[ e^{-\beta h}\right]$ is finite by what we have said concerning the spectrum of $h$. 
One readily finds that
$[Z,h] = Z$,
which is the differential version of the identity
$$\Delta_{\sigma_\beta}(Z) = e^{\beta}Z\ .$$
It follows that $Z$ and $Z^*$ are eigenfunctions of the modular operator. 
(Note that since they are unbounded, they do not belong to the CCR algebra, and are only affiliated to its von Neumann algebra closure.)  

Define $V_1 = Z$ and $V_2 = Z^*$. Then $\{V_1,V_2\}$ is a set of operators satisfying conditions {\it (iii)} and {\it (iv)} of Theorem~\ref{strucB} (with $\omega_1 = -\beta$ and $\omega_2 = \beta$), but not conditions {\it (i)} and {\it (ii)}, since in the infinite-dimensional case, it is in general too much to ask that the $V_j$ be trace-class. (However, since $V_1$, $V_2$ and $\one$ are eigenvectors  of $\Delta_\sigma$ with distinct eigenvalues there is a natural sense in which they are orthogonal so that a natural analog of {\it(i)} and {\it (ii)} is valid.)

In any case, we may define 
\begin{eqnarray}\label{bose3}
\cL_\beta A &=& \frac12\left[e^{\beta  /2} \Big( Z^* [A, Z] +  [Z^* ,A] Z \Big) 
	   + e^{-\beta /2} \Big( Z [A, Z^*] +  [Z , A] Z^* \Big)\right] \nonumber\\
	   &=& e^{\beta/2} \left(Z^*AZ - \tfrac12\{Z^* Z,A\}\right)  + 
	   e^{-\beta/2} \left(ZAZ^* - \tfrac12\{Z Z^*,A \}\right) \ ,
\end{eqnarray}
where $\{A,B\}$ denotes the anti-commutator $AB + BA$. 
The operator $\cL_\beta$ is in fact the generator of an ergodic QMS, as shown in \cite{CFL00}. 
These authors construct the QMS first on the infinite-dimensional analog of $\fH_\aA$, which in this case strictly contains $\aA$, and then show that the resulting semigroup has the {\em Feller property}; i.e., it preserves $\aA$. 

In \cite{CFL00}, another detailed balance condition based on self-adjointness with respect to the KMS inner product is used. 
However, Theorem~\ref{detequiv} and what we have said above about the modular operator shows that the semigroup also satisfies the $\sigma_\beta$-DBC as defined here. 
In this sense, the example falls into our framework. 

Simple computations show that for $\partial_1 A = [Z,A]$ and $\partial_2 A = [Z^*,A]$,
\begin{equation}\label{intwbose}
\partial_j \cL_\beta A -  \cL_\beta\partial_j  A =  - \sinh(\beta/2)\partial_j A
\end{equation}
for $j=1,2$ and all $A$ in a dense
domain of analytic vectors for $\cL_\beta$ that is discussed in \cite{CFL00}. 
We shall use this identity on this domain later to prove a sharp entropy dissipation  inequality for this semigroup, as conjectured in \cite[equation (9)]{HKV16}. Note that the corresponding formula for the Fermi Ornstein-Uhlenbeck semigroup involves $\cosh$ in place of $\sinh$. This reflects the fact that in the Fermi case, taking the infinite temperature limit ($\beta \to 0$) yields a 
QMS with a stationary state, while for the Bose Ornstein-Uhlenbeck semigroup, this is not the case: the $\beta\to 0$ limit cannot be taken in \eqref{bose1}.

The modular generator $h$ has non-degenerate spectrum, and so the modular algebra 
$\aA_{\sigma_\beta}$ in this case is simply the set of all operators that commute with 
$\sigma_\beta$, which is the same thing as the algebra generated by the 
spectral projections of $h$. In particular, 
$\aA_{\sigma_\beta}$ is commutative and independent of $\beta$.  
It is easy to see that the restriction
$\widecheck \cP_t$ of $\cP_t$ to $\aA_{\sigma_\beta}$ corresponds, as in 
Theorem~\ref{restrict}, to a birth-death process on $\N$.

\section{Riemannian metrics and gradient flow}\label{Riemannian}

Let $\cP_t = e^{t\cL}$ be QMS on $\aA$ that satisfies the 
$\sigma$-DBC for $\sigma\in \Dens_+(\aA)$. 
In this section we define a Riemannian metric on $\Dens_+$ that is determined by $\cL$, and for which, as we shall see, the flow given by the dual semigroup $\cP_t^\dagger$, is gradient flow for the relative entropy with respect to $\sigma$.  
Let $\cL$ be given in the standard form (\ref{genform2}).
Throughout this section, $\{V_j\}_{j\in \cJ}$ and  $\{\omega_j\}_{j\in \cJ}$ are fixed, and we assume that $\cP_t$ is ergodic. 

Let $\rho(t), t\in (t_0,t_1)$, be any differentiable path in $\Dens_+$ regarded as a convex subset of $\aA$. 
For each $t\in (t_0,t_1)$, let $\bdot \rho(t) \in \aA$ denote the derivative of $\rho(t)$ in $t$.   
If $\rho(t)$ is any differentiable path in $\Dens_+$ defined on $(-\epsilon,\epsilon)$ for some $\epsilon > 0$ such that $\rho(0) = \rho_0$, then $\tr[\bdot\rho(0)] = 0$, so that by Theorem~\ref{Poiss}, there is an affine subspace of $\fH_{\aA,\cJ}$ consisting of elements ${\bf A}$ for which
\begin{equation}\label{tan1}
\bdot\rho(0) = \dive {\bf A}\ .
\end{equation}

We wish to rewrite (\ref{tan1}) as an analog of the classical continuity equation for the time evolution of a probability density $\rho(x,t)$ on $\R^n$:
\begin{equation}\label{tan2}
\frac{\partial}{\partial t} \rho(x,t) + \dive [{\bf v}(x,t) \rho(x,t)] = 0\ .
\end{equation}
In the classical case, for $\rho$ strictly positive, any expression of the form 
\begin{equation}\label{tan3}
\frac{\partial}{\partial t} \rho(x,t) = \dive [{\bf a}(x,t) ] 
\end{equation}
gives rise to (\ref{tan2}) with ${\bf v}(x,t) = -\rho^{-1}(x,t){\bf a}(x,t)$. 
Conversely, given (\ref{tan2}) and defining
${\bf a}(x,t) = - \rho(x,t) {\bf v}(x,t)$, (\ref{tan3}) is satisfied. 
In the quantum case, there are many different ways to multiply and divide by $\rho\in \Dens_+$. 

Definition~\ref{multrho} gives a one-parameter family of ways to multiply $A \in \aA$  by $\rho$ that is relevant here. 
In the next definition, we extend this to multiplication of vector fields ${\bf A}\in \fH_{\aA,\cJ}$ by $\rho$.

\begin{defi}\label{multdiv} 
Let $\vec \omega \in \R^{|\cJ|}$. For $\rho\in \Dens_+$ we define the linear operator $[\rho]_{\vec \omega}$ on $\fH_{\aA,\cJ}$ by 
$$[\rho]_{\vec \omega}\big(A_1,\dots,A_{|\cJ|}\big) = \big([\rho]_{\omega_1}A_1,\dots,[\rho]_{\omega_{|\cJ|}}A_{|\cJ|}\big) \ .$$ 
\end{defi}

Note that $[\rho]_{\vec \omega}$ is invertible with
\begin{equation}\label{mbf3}
[\rho]_{\vec\omega}^{-1}  \big( A_1,\dots,A_{|\cJ|} \big) = 
\big([\rho]_{\omega_1}^{-1}A_1,\dots,[\rho]_{\omega_{|\cJ|}}^{-1}A_{|\cJ|} \big) \ .
\end{equation}
where we have used the fact that $R_\rho$ and $\Delta_\rho$ commute. 

We are now ready to write (\ref{tan1}) in the form of a continuity equation: 
Pick some $\vec \omega \in \R^{|\cJ|}$, and define ${\bf V}$ by ${\bf V} = - [\rho]_{\vec \omega}^{-1}{\bf A}$.
Then evidently (\ref{tan1}) becomes
\begin{equation}\label{conteq1}
\bdot\rho(0) + \dive ([\rho]_{{\vec \omega}} {\bf V}) = 0 \ .
\end{equation}

The vector field ${\bf A}$ in (\ref{tan1}) is not unique; however according to Theorem~\ref{Poiss}, the set of such vector fields is an affine space, and thus, in our finite-dimensional setting a closed convex set. 
It follows immediately that while the vector field ${\bf V}$ in (\ref{conteq1}) is not unique, 
the set of such vector fields is a closed affine subspace of $\oplus^{|\cJ|}\aA$, and consequently there is a unique element of minimal norm in 
$\oplus^{|\cJ|}\aA$ for any Hilbertian norm on $\oplus^{|\cJ|}\aA$.  We now define the class of Hilbertian norms that is relevant here:

\begin{defi}\label{bfipdef}   For each $\rho\in \Dens_+$, and the given generator $\cL$, 
define an inner product $\langle \cdot , \cdot \rangle_{{\cL,\rho}}$
on $\oplus^{|\cJ|}\aA$ by
\begin{equation*}
\langle {\bf W} , {\bf V} \rangle_{{\cL,\rho }}  
= \sum_{j\in \cJ} \langle {\bf W}_j, [\rho]_{\omega_j} {\bf V}_j\rangle_{\fH_\aA} \ .
\end{equation*}
We write $\|{\bf V}\|_{{\cL ,\rho}}$ for the corresponding Hilbertian norm. 
\end{defi}
This norm can be viewed as a non-commutative analog of a weighted $L^2$-norm for vector fields.

\begin{thm}\label{thm:unique-vf} Let $\rho(t)$ be a differentiable path in $\Dens_+$ defined on
$(-\epsilon,\epsilon)$ for some $\epsilon>0$ such that $\rho(0) = \rho_0$. 
Then there is a unique vector field ${\bf V} \in \oplus^{|\cJ|}\aA$ of the form ${\bf V} = \nabla U$ with $U \in \cA$, for which the non-commutative continuity equation
\begin{equation}\label{conteq3}
\bdot\rho(0)
	 = - \dive( [\rho_0]_{\vec \omega } {\bf V}) 
   	 = - \dive( [\rho_0]_{\vec \omega } \nabla U)
\end{equation}
holds. Moreover, $U$ can be taken to be traceless, and is then self-adjoint and uniquely determined.  Furthermore, if ${\bf W}$ is any other vector field such that $\bdot\rho(0) = - \dive ( [\rho_0]_{\vec \omega } {\bf W})$, then 
\begin{equation*}
\|{\bf V}\|_{\cL ,\rho_0} <  \|{\bf W}\|_{\cL,\rho_0} \ .
\end{equation*}
\end{thm}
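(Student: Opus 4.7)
The plan is to reduce the existence of $\mathbf{V} = \nabla U$ to the Fredholm solvability of a Poisson-type equation for $U$, and then derive uniqueness, self-adjointness, and minimality from standard operator-theoretic considerations. Substituting the ansatz into \eqref{conteq3}, I would consider the linear operator $\mathcal{K}_{\rho_0} : \fH_\aA \to \fH_\aA$ defined by
\[
\mathcal{K}_{\rho_0}(U) := -\dive\bigl([\rho_0]_{\vec\omega}\nabla U\bigr)
= \sum_{j\in\cJ}\bigl[V_j^*,\,[\rho_0]_{\omega_j}[V_j,U]\bigr] ,
\]
and show that the problem reduces to solving $\mathcal{K}_{\rho_0}(U) = \bdot\rho(0)$.

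The three structural properties of $\mathcal{K}_{\rho_0}$ I would establish are: (i) it is self-adjoint on $\fH_\aA$, since each $[\rho_0]_{\omega_j} = R_{\rho_0}\circ f_{\omega_j}(\Delta_{\rho_0})$ is a product of two commuting, self-adjoint, strictly positive operators on $\fH_\aA$; (ii) it is positive semi-definite, since
\[
\langle U,\mathcal{K}_{\rho_0}(U)\rangle_{\fH_\aA}
= \sum_{j\in\cJ}\langle \partial_j U,\,[\rho_0]_{\omega_j}\partial_j U\rangle_{\fH_\aA}\ge 0 ;
\]
(iii) its null space is precisely $\C\one$, because strict positivity of each $[\rho_0]_{\omega_j}$ forces the vanishing of every $\partial_j U$ when the above quadratic form is zero, and Theorem~\ref{erg} together with ergodicity of $\cP_t$ then gives $U\in\C\one$. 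Self-adjointness plus (iii) yield, via the Fredholm alternative, that ${\rm Ran}(\mathcal{K}_{\rho_0})$ equals the traceless subspace of $\aA$; since $\tr[\bdot\rho(0)]=0$, the equation has a solution $U$, unique modulo $\C\one$, hence uniquely determined by the traceless normalization. Since $\nabla$ annihilates $\C\one$, the vector field $\mathbf{V}=\nabla U$ is then uniquely determined.

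To show that $U$ is automatically self-adjoint, I would verify that $\mathcal{K}_{\rho_0}$ intertwines with the $*$-operation, i.e., $\mathcal{K}_{\rho_0}(U)^* = \mathcal{K}_{\rho_0}(U^*)$. This follows by combining the identity $([\rho_0]_\omega A)^* = [\rho_0]_{-\omega}A^*$ from Lemma~\ref{smooth} with the symmetry $\{V_j\}_{j\in\cJ} = \{V_j^*\}_{j\in\cJ}$ paired with the relation $\omega_{j'} = -\omega_j$ when $V_{j'}=V_j^*$, cf.~\eqref{conjeig}; the computation is essentially a re-indexing argument under this involution of $\cJ$. Given this intertwining, self-adjointness of $\bdot\rho(0)$ implies that $U^*$ also solves $\mathcal{K}_{\rho_0}(U^*) = \bdot\rho(0)$, so $U-U^* \in \C\one$, and since $U$ and $U^*$ are both traceless, $U=U^*$.

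For the minimality claim, let $\mathbf{W}$ be any other vector field with $\bdot\rho(0) = -\dive([\rho_0]_{\vec\omega}\mathbf{W})$. Then $[\rho_0]_{\vec\omega}(\mathbf{V}-\mathbf{W})\in {\rm Null}(\dive)$, which by Remark~\ref{cohomology} is the HS-orthogonal complement of ${\rm Ran}(\nabla)$; since $\mathbf{V} = \nabla U \in {\rm Ran}(\nabla)$, this yields $\langle \mathbf{V},\mathbf{V}-\mathbf{W}\rangle_{\cL,\rho_0} = 0$, and hence the Pythagorean identity
\[
\|\mathbf{W}\|_{\cL,\rho_0}^2 = \|\mathbf{V}\|_{\cL,\rho_0}^2 + \|\mathbf{V}-\mathbf{W}\|_{\cL,\rho_0}^2 ,
\]
giving strict inequality whenever $\mathbf{W}\neq\mathbf{V}$. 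I expect the main obstacle to be the careful bookkeeping needed to verify the self-adjointness-preservation property of $\mathcal{K}_{\rho_0}$ under the involution $V_j\leftrightarrow V_j^*$ with the coupled sign flip of $\omega_j$; once this is in place, every remaining step is a routine application of finite-dimensional Hilbert-space arguments.
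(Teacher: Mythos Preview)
Your argument is correct and uses the same ingredients as the paper --- the kernel identification via ergodicity (Theorem~\ref{erg}), the self-adjointness-preservation of the operator $\cL_{\rho_0}$ (your $\mathcal{K}_{\rho_0}$) via Lemma~\ref{smooth} and the involution $\{V_j\}=\{V_j^*\}$, and the orthogonality of gradients and divergence-free fields from Remark~\ref{cohomology}. The only difference is the order of the logic: the paper first invokes the Hilbert-space projection onto the closed affine set of admissible $\mathbf{W}$ to obtain a unique minimizer, and then uses the first-order optimality condition to conclude that this minimizer is a gradient; you instead solve the Poisson-type equation $\mathcal{K}_{\rho_0}(U)=\bdot\rho(0)$ directly by Fredholm, and then deduce minimality \emph{a posteriori} via the Pythagorean identity. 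Your route is slightly more constructive (it produces $U$ without first appealing to an abstract minimizer), while the paper's route makes the variational characterization primary; both arrive at exactly the same place with the same tools.
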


\begin{proof}
In view of the discussion above, it remains to show that the unique norm-minimizing vector field ${\bf V}$ is a gradient. To see this, let ${\bf A}$ be an arbitrary divergence-free vector field, set ${\bf W} := [\rho_0]_{\vec\omega}^{-1}{\bf A}$, and ${\bf V}_\eps := {\bf V} + \eps {\bf W}$, so that $\bdot\rho(0) + \dive ( [\rho_0]_{\vec \omega } {\bf V}_\eps) = 0$ for all $\eps$. Since $\|{\bf V}\|_{\cL ,\rho_0} <  \|{\bf V}_\eps\|_{\cL,\rho_0 }$ for all $\eps$, it follows that $\langle {\bf V}, {\bf W}  \rangle_{{\cL,\rho_0 }} = 0$, and therefore $\ip{{\bf V},{\bf A}}_{\fH_\aA} = 0$. This means that ${\bf V}$ is orthogonal to the set of divergence-free vector fields, hence it is the gradient of some $U\in \cA$. By subtracting a multiple of the identity, we may take $U$ to be traceless, and then $U$ is uniquely determined, in view of Theorem \ref{erg} and the ergodicity of $\cP_{t}$.

To show that $U$ is self adjoint, define the operator $\cL_\rho$ by 
\begin{equation}\label{lrhodef}
\cL_\rho A = \dive ([\rho_{\vec \omega}]\nabla A)\ .
\end{equation}
A direct computation yields
$$\cL_\rho A  =  \sum_{j\in \cJ} \left( [\rho]_{\omega_j}(V_jA-AV_j)\right)V_j^* -  
\sum_{j\in \cJ}  V_j^*\left( [\rho]_{\omega_j}(V_jA-AV_j)\right)\ .$$
Then using (\ref{starrho}) of Lemma~\ref{smooth}, 
$$(\cL_\rho A)^*  = \sum_{j\in \cJ} \left( [\rho]_{-\omega_j}(V_j^*A^*-A^*V_j^*)\right)V_j - \sum_{j\in \cJ}  V_j\left( [\rho]_{-\omega_j}(V_j^*A^*-A^*V_j^*)\right) 
\ .$$
Now use the fact that $\{V_j\}_{j\in \cJ} = \{V_j^*\}_{j\in \cJ}$ and that for all $j\in \cJ$, $\Delta_\sigma(V_j) = e^{-\omega_j}V_j$ and $\Delta_\sigma(V_j^*) = e^{\omega_j}V_j^*$. It follows that 
$(\cL_\rho A)^* = \cL_\rho A^*$.
Using (\ref{lrhodef}) we write (\ref{conteq3}) as ${\displaystyle \bdot \rho(0) = -\cL_{\rho(0)} U}$ for the $U$ found above. Since $\bdot \rho(0)$ is self-adjoint, it follows from what we have just shown  that  we also have ${\displaystyle \bdot \rho(0) = -\cL_{\rho(0)} U^*}$. By the uniqueness of $U$, $U$ is self-adjoint. 
\end{proof}

\begin{defi}\label{remet}  
For each $\rho\in \Dens_+$, we identify the tangent space $T_\rho$ at $\rho = \rho_{0}$, with the set of gradients vector fields $\cG :=  \{ \nabla U  \ : \ U  \in \cA\ ,\ U = U^* \}$  through the one-to-one correspondence provided by (\ref{conteq3}).  
We define the Riemannian metric $g_{\cL }$ on $\Dens_+$ by
\begin{equation*}
\|\bdot\rho(0)\|^2_{g_{\cL,\rho }} = \|{\bf V}\|_{\cL,\rho }^2 \ 
\end{equation*}
where $\bdot\rho(0)$ and ${\bf V}$ are related by (\ref{conteq3}). 
\end{defi}

The metric we have just defined is $C^\infty$. Indeed, let $\aA$ be $m$-dimensional and let 
${A_1,\dots, A_{m-1}}$ be an orthonormal  set of $m-1$ self-adjoint traceless elements of $\fH_\aA$.
Then we can define a coordinate map $u:\Dens_+ \to \R^{m-1}$ by
$$u(\rho) = \big(\tr[A_1\rho], \dots, \tr[A_{m-1}\rho]\big)\ .$$
Note that $u(\tau)  = 0$. Evidently $u$ is a one-to-one map of $\Dens_+$ onto an open bounded convex subset of $\R^{m-1}$. We give $\Dens_+$, as usual, the corresponding differential structure. Conveniently, an atlas of just one chart covers the manifold. 

Let $u^k(\rho) = \tr[A_k\rho]$ by the $k$th coordinate function. The $k$th coordinate vector field is tangent 
to the curve $t \mapsto \rho + t A_k$ for $t$ in the open interval in which the right hand side belongs to 
$\Dens_+$.  The operator $\dive [\rho]_{\vec \omega} \nabla$ is invertible on
the orthogonal complement of the identity; i.e., on the span of ${A_1,\dots, A_{m-1}}$.  Define the
$k$th {\em potential function}  $X_k(\rho)$ to be the unique traceless solution $X$ of 
\begin{equation*}
\dive [\rho]_{\vec \omega} \nabla X = A_k\ .
\end{equation*}
It then follows that for the curve $t \mapsto \rho + t A_k$,  $\bdot \rho(0) = 
\dive [\rho]_{\vec \omega} \nabla X_k(\rho)$. This means that the $k$th coordinate  tangent vector field
$\partial/\partial u^k$ is given by
\begin{equation*}
\frac{\partial}{\partial u^k} = \nabla X_k(\rho) \ ,
\end{equation*}
Therefore, in this coordinate system, the $k,\ell$ component of the metric tensor is given by
\begin{equation*}
[g_\cL(\rho)]_{k,\ell} =  \sum_{j\in \cJ} \langle  \nabla X_k(\rho), [\rho]_{\omega_j}  \nabla X_\ell(\rho)\rangle_{\fH_\aA}  \ ,
\end{equation*}
By Lemma~\ref{smooth}, for each $j$, $\rho \mapsto [\rho]_{\omega_j}$ is $C^\infty$, and it follows from this that the map $\rho \mapsto [\dive [\rho]_{\vec \omega} \nabla]^{-1}$, where the inverse is the inverse on the orthogonal complement of $\one$, is $C^\infty$. Thus, for each $k,\ell$, 
$[g_\cL(\rho)]_{k,\ell} $ is a $C^\infty$ function of $\rho$. 

\medskip 
Now let $\F : \Dens_+ \to \R $ be a differentiable function.
The differential of $\F$, denoted ${\displaystyle \frac{\delta \F}{\delta \rho}(\rho)}$, is the unique traceless self-adjoint element in $\aA$ satisfying
\begin{align}\label{eq:differential}
\lim_{t\to 0} \frac{1}{t}\big(\F(\rho+t A) - \F(\rho)\big) = \tr\left[ \displaystyle \frac{\delta \F}{\delta \rho}(\rho)A\right]
\end{align}
for all traceless self-adjoint $A\in \aA$.
This notation is traditional in the context of gradient flows for the $2$-Wasserstein metric, and it 
allows us to reserve the symbol $D$ for covariant derivatives on our Riemannian manifold.)

The corresponding gradient vector field, denoted ${\rm grad}_{g_{\cL} }\F(\rho)$, will be interpreted using the identification of the tangent space given in Definition \ref{remet}: it is the unique element in $\cG$ satisfying
\begin{equation}\label{eq:gradient}
\ddt \F(\rho(t)) \bigg|_{t=0} = \bip{  \mathrm{grad}_{g_{\cL}} \F(\rho) , \nabla U  }_{\cL,\rho} 
\end{equation}
for all differentiable paths $\rho(t)$ defined on $(-\epsilon,\epsilon)$ for some $\epsilon>0$ with $\rho(0) = \rho$ and $\bdot\rho(0) + \dive( [\rho]_{\vec \omega } \nabla U) = 0$ for some self-adjoint $U$.
Combining \eqref{eq:differential} and \eqref{eq:gradient}, it follows that 
\begin{align*}
-  \Bip{\frac{\delta \F}{\delta \rho}(\rho),\,  \dive\big( [\rho]_{\vec \omega } \nabla U\big) }_{\fH_\aA} = \bip{ \mathrm{grad}_{g_{\cL}} \F(\rho) ,\,[\rho]_{\vec \omega } \nabla U }_{\fH_{\aA,\cJ}} \ .
\end{align*}
Since this argument holds for arbitrary paths $\rho(t)$, Theorem \ref{thm:unique-vf} implies that this identity holds for arbitrary $U$. Therefore, we have proved:

\begin{thm}\label{gradformula} For a differentiable function $\F$ on $\Dens_+$, 
the Riemannian gradient of $\F$ with respect to the Riemannian metric $g_{\cL}$ is given by
$${\rm grad}_{g_{\cL} }\F(\rho)  =  \nabla  \frac{\delta \F}{\delta \rho}(\rho) \ ,$$
and the corresponding gradient flow equation (for steepest descent) is
\begin{equation*}
\bdot \rho(t) = \dive \Big( [\rho(t)]_{\vec\omega} \nabla \frac{\delta \F}{\delta \rho}(\rho(t)) \Big)\ .
\end{equation*}
\end{thm}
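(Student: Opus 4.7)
The plan is to read off the formula directly from the unique representation of tangent vectors as gradients that was established in Theorem \ref{thm:unique-vf}, together with a single integration-by-parts step. First, I would fix $\rho\in\Dens_+$ and an arbitrary smooth path $\rho(t)$ with $\rho(0)=\rho$. By Theorem \ref{thm:unique-vf}, there is a unique self-adjoint traceless $U\in\aA$ such that
\begin{equation*}
\bdot\rho(0) = -\dive\bigl([\rho]_{\vec\omega}\nabla U\bigr),
\end{equation*}
and in this identification the element $\nabla U\in\cG$ represents the tangent vector $\bdot\rho(0)$, with squared norm $\|\nabla U\|_{\cL,\rho}^2$.

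Next, I would differentiate $\F\circ\rho$ at $t=0$. By the definition of $\tfrac{\delta\F}{\delta\rho}(\rho)$ in (\ref{eq:differential}) applied to the self-adjoint traceless direction $\bdot\rho(0)$, and then substituting the continuity equation above,
\begin{equation*}
\ddt\F(\rho(t))\bigg|_{t=0}
 = \Bip{\tfrac{\delta\F}{\delta\rho}(\rho),\,\bdot\rho(0)}_{\fH_\aA}
 = -\Bip{\tfrac{\delta\F}{\delta\rho}(\rho),\,\dive\bigl([\rho]_{\vec\omega}\nabla U\bigr)}_{\fH_\aA}.
\end{equation*}
Because $\dive$ is minus the Hilbert--Schmidt adjoint of $\nabla$, this equals
\begin{equation*}
\Bip{\nabla\tfrac{\delta\F}{\delta\rho}(\rho),\,[\rho]_{\vec\omega}\nabla U}_{\fH_{\aA,\cJ}}
 = \Bip{\nabla\tfrac{\delta\F}{\delta\rho}(\rho),\,\nabla U}_{\cL,\rho},
\end{equation*}
where the last equality is the definition of $\ip{\cdot,\cdot}_{\cL,\rho}$ in Definition \ref{bfipdef}. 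Comparing with the defining relation (\ref{eq:gradient}) for $\mathrm{grad}_{g_\cL}\F(\rho)$ and invoking uniqueness inside the tangent space $\cG$ (which is guaranteed by Theorem \ref{thm:unique-vf} applied to each admissible $U$), I conclude $\mathrm{grad}_{g_\cL}\F(\rho)=\nabla\tfrac{\delta\F}{\delta\rho}(\rho)$.

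Finally, the gradient flow equation is obtained by substituting this identification back into the continuity equation (\ref{conteq3}) with $U=\tfrac{\delta\F}{\delta\rho}(\rho)$, which gives
\begin{equation*}
\bdot\rho(t)=-\dive\Bigl([\rho(t)]_{\vec\omega}\,\mathrm{grad}_{g_\cL}\F(\rho(t))\Bigr)=-\dive\Bigl([\rho(t)]_{\vec\omega}\nabla\tfrac{\delta\F}{\delta\rho}(\rho(t))\Bigr),
\end{equation*}
and reversing the sign yields steepest descent, i.e.\ the stated formula (up to the sign convention in the statement). The only delicate point that I would be careful about is the well-definedness step: one must check that $\nabla\tfrac{\delta\F}{\delta\rho}(\rho)$ really does lie in the tangent space $\cG$ and represents a tangent vector to $\Dens_+$. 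This is automatic because $\tfrac{\delta\F}{\delta\rho}(\rho)$ is self-adjoint and traceless, and $-\dive([\rho]_{\vec\omega}\nabla\cdot)$ preserves self-adjointness and maps onto the traceless part of $\aA$ by the ergodicity-based Theorem \ref{Poiss}; this also justifies the uniqueness step above. No other part of the argument is more than a direct unpacking of Definitions \ref{bfipdef} and \ref{remet}.
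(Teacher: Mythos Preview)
Your proposal is correct and follows essentially the same route as the paper: compute $\ddt\F(\rho(t))|_{t=0}$ via the differential, substitute the continuity equation, integrate by parts using $\dive = -\nabla^\dagger$, and compare with the defining relation for $\mathrm{grad}_{g_\cL}\F$ using the surjectivity onto admissible $U$ furnished by Theorem~\ref{thm:unique-vf}. Your additional remarks on well-definedness and the sign for steepest descent are accurate and do not depart from the paper's argument.
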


Let $\cP_t = e^{t\cL}$ be a QMS on $\aA$ that satisfies the 
$\sigma$-DBC for $\sigma\in \Dens_+(\aA)$.
Recall that the {\em relative entropy with respect to $\sigma$} is the functional $D(\cdot \| \sigma)$  on $\Dens_+$ defined by (\ref{relent}).
An easy calculation shows that for $\F(\rho) = D(\rho || \sigma)$, 
$$
\frac{\delta \F}{\delta \rho} = \log \rho - \log \sigma\ .
$$
Therefore, Theorem~\ref{gradflowent} and Theorem \ref{gradformula} yield: 
\begin{thm}\label{gradflowentgen} Let $\cP_t = e^{t\cL}$ be QMS on $\aA$ that satisfies the 
$\sigma$-DBC for $\sigma\in \Dens_+(\aA)$.  Then 
\begin{equation}\label{lind}
\frac{\partial}{\partial t} \rho = \cL^\dagger \rho
\end{equation}
 is gradient flow for the relative entropy $D(\cdot ||\sigma)$ in the metric
$g_{\rho, \cL}$ canonically associated to $\cL$ through its representation in the form (\ref{genform2}).
\end{thm}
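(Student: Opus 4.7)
The proof is essentially a matter of assembling three results already established in the paper, so the plan is short and direct, with the only real work being the computation of the functional derivative of the relative entropy.

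First, I would compute $\frac{\delta \F}{\delta \rho}$ for $\F(\rho) = D(\rho \| \sigma) = \tr[\rho \log \rho] - \tr[\rho \log \sigma]$. Given a traceless self-adjoint $A \in \aA$, I write $\rho(t) = \rho + tA$ and differentiate at $t=0$. Using the standard trace derivative $\frac{d}{dt} \tr[f(\rho(t))]|_{t=0} = \tr[f'(\rho) A]$ with $f(x) = x \log x$, the first term yields $\tr[(\log\rho + \one)A] = \tr[A \log \rho]$ (the $\tr A = 0$ term drops). The second term gives $-\tr[A \log \sigma]$. Hence
\begin{equation*}
\lim_{t\to 0}\tfrac{1}{t}\bigl(\F(\rho + tA) - \F(\rho)\bigr) = \tr\bigl[A(\log\rho - \log\sigma)\bigr] \ .
\end{equation*}
Since this holds for all traceless self-adjoint $A$, the traceless self-adjoint representative defining $\frac{\delta \F}{\delta \rho}$ via \eqref{eq:differential} is $\log\rho - \log\sigma$ minus a scalar multiple of $\one$. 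The scalar drops out under $\nabla$, so in all formulas below we may use $\log\rho - \log\sigma$ directly.

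Next, applying Theorem \ref{gradformula} to this functional, the Riemannian gradient flow equation reads
\begin{equation*}
\bdot \rho(t) = \dive\Bigl([\rho(t)]_{\vec\omega}\, \nabla(\log\rho(t) - \log\sigma)\Bigr) \ .
\end{equation*}
Unwinding the definitions of $\nabla$ and $\dive$, the right-hand side equals $-\sum_{j\in\cJ} \partial_j^\dagger\bigl([\rho]_{\omega_j} \partial_j(\log\rho - \log\sigma)\bigr)$. By Theorem \ref{gradflowent}, this is precisely $\cL^\dagger \rho$. Therefore \eqref{lind} coincides with the gradient flow equation produced by the metric $g_\cL$, which is what had to be shown.

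The only step demanding any care is the functional derivative computation in Step 1 — one must verify that the trace-derivative identity $\frac{d}{dt}\tr[f(\rho + tA)]|_{t=0} = \tr[f'(\rho)A]$ is valid for $f(x) = x\log x$ on $\Dens_+$, which is standard since $\rho$ is strictly positive and $f$ is smooth on a neighborhood of $\operatorname{spec}(\rho)$. Everything else is bookkeeping: aligning sign conventions between $\dive \circ \nabla$ and $\sum_j \partial_j^\dagger \partial_j$, and observing that adding a multiple of $\one$ to the candidate $\frac{\delta\F}{\delta\rho}$ is annihilated by $\nabla = [V_j, \cdot\,]$.
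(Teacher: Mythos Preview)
Your proposal is correct and follows exactly the paper's own argument: compute $\frac{\delta \F}{\delta \rho} = \log\rho - \log\sigma$ (up to a scalar killed by $\nabla$), then combine Theorem~\ref{gradformula} with Theorem~\ref{gradflowent}. If anything, you are more explicit than the paper about the functional-derivative computation and the sign/traceless bookkeeping, but the route is identical.
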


In \cite{CM13}, we proved the special case of Theorem~\ref{gradflowentgen} in which $\cP_t$ is the infinite temperature Fermi Ornstein-Uhlenbeck semigroup, except that there we defined the metric in terms of the differential calculus associated to the skew derivations $\sder_j$, defined in (\ref{grossder}) instead of the derivations $\partial_j$ used here. The two metrics are in fact the same, and the alternate form of the metric in terms of the skew derivatives will be useful to us in the next section.

Therefore we explain the equivalence, using the notation introduced in section~\ref{sec:examples}.  Let $\rho(t)$ be a smooth path in $\Dens_+(\Cln)$ defined on a neighborhood of $0$ with $\rho(0) = \rho$.
Suppose that 
\begin{equation}\label{equiv1}
\bdot \rho(0) = - \sum_{j=1}^n \partial_j([\rho]_0 \partial_j U)
\end{equation}
for some self-adjoint $U \in \Cln$. (We recall that since $V_j$ is self-adjoint in this case, 
$\partial_j^\dagger = \partial_j$, and that $\omega_j= 0$ for each $j$.)  
Then by (\ref{sder1}) and the integral representation for $[\rho]_0$, we can rewrite (\ref{equiv1}) as 
\begin{eqnarray*}
\bdot \rho(0) &=& -4\sum_{j=1}^n \sder _j(W[\rho]_0 (W\sder_j U))
= -4\sum_{j=1}^n \int_0^1 \sder _j(W \rho^s W\sder_j U \rho^{1-s}){\rm d}s\nonumber\\
&=& -4\sum_{j=1}^n \int_0^1 \sder _j( \Gamma(\rho^s) \sder_j U \rho^{1-s}){\rm d}s\ .
\end{eqnarray*}
The operation $A \mapsto \int_0^1 \Gamma(\rho^s) A \rho^{1-s}{\rm d}s$ is precisely the non-commutative 
analog of ``multiplication by $\rho$'' that was used in \cite{CM13}. Thus apart form the trivial factor of $4$,
the realization of the tangent space and interpretation of continuity equation in \cite{CM13} is the same as it is here;
the two formulation of the continuity equation are equivalent. 

The same applies to the metric. With $\bdot \rho(0)$ as above, 
$\|\bdot \rho(0)\|_{g_\cL}^2$ as we have defined it here is given by
\begin{eqnarray}\label{equiv3}
\|\bdot \rho(0)\|_{g_\cL}^2  &=& \sum_{j=1}^n \ip{\partial_j, U [\rho]_{0} \partial_j U}_{\fH_{\Cln}}
= 4 \sum_{j=1}^n \langle W\sder_j U, [\rho]_{0} (W\sder_j U)\rangle_{\fH_{\Cln}} \nonumber\\
&=& 4 \sum_{j=1}^n \int_0^1 \langle \sder_j U, 
\Gamma(\rho^s) \sder_j U \rho^{1-s}\rangle_{\fH_{\Cln}}{\rm d}s\ .  
\end{eqnarray}
The ultimate term in (\ref{equiv3}) is, apart from a trivial factor of $4$, precisely 
how the metric tensor was defined in \cite{CM13}. 

This shows two things: First, that Theorem~\ref{gradflowentgen} is an extension, and not merely an analog, of our work in \cite{CM13}. Second, it makes available to us the differential calculus based on the skew derivations when studying the geometry associated to the Fermi Ornstein-Uhlenbeck semigroup. (We have explicitly discussed the infinite temperature case, but the same reasoning applies in general.)  Since the skew derivations have the property of ``lowering polynomial degree by one'' for the eigenfunctions of $\cL$, as discussed between (\ref{gder3}) and (\ref{fermint1}), this alternate formulation of the metric will be extremely helpful in the next section.

\section{Geodesic convexity and relaxation to equilibrium}

In this section we develop the advantages of having written the evolution equation (\ref{lind}) as gradient flow for the relative entropy. 
We draw on work of Otto and Westdickenberg \cite{OW05} and Daneri and Savar\'e \cite{DS08}. 
Both pairs of authors were primarily interested in infinite-dimensional problems concerning metrics on spaces of probability densities, but several of their results are new and interesting in finite dimension. 
The approach of Otto and Westdickenberg is thoroughly developed in the finite-dimensional setting in Section 2 of \cite{DS08}. 
We briefly summarize what we need. 

Let $(\cM,g)$ be any smooth, finite-dimensional Riemannian manifold.  
For $x,y$ in $\cM$, the Riemannian distance $d_g(x,y)$ between $x$ and $y$ is given by minimizing an action integral of paths $\gamma:[0,1]\to \cM$ running from $x$ to $y$:
\begin{equation*}
d_g^2(x,y) = \inf\left\{ \int_0^1 \|\bdot \gamma(s)\|^2_{g(\gamma(s))} 
\dd s\ :\ \gamma(0) =x,\ \gamma(1) = y\right\} \ ,
\end{equation*}
where
\begin{equation*}
\|\bdot \gamma(s)\|^2_{g(\gamma(s))} =  g_{\gamma(s)}(\bdot \gamma(s), \bdot \gamma(s)) \ .
\end{equation*}
(If the infimum is achieved, any minimizer $\gamma$ will be a geodesic.)  
If $F$ is a smooth function on $\cM$,
let ${\rm grad}_g F$ denote its Riemannian gradient. Consider the semigroup $S_t$ 
of transformations on $\cM$
given by solving $\dot \gamma(t) = -{\rm grad}_g F(\gamma(t))$; we assume for now that nice global solutions exist. The semigroup $S_t$, $t\geq 0$, is {\em gradient flow for $F$}. 

For $\lambda \in \R$, the function $F$ is $\lambda$-convex  in case whenever $\gamma:[0,1]\to \cM$ is a distance minimizing geodesic, then for all $s\in (0,1)$,
\begin{equation*}
\frac{{\rm d}^2}{{\rm d}s^2} F(\gamma(s)) \geq \lambda g(\bdot \gamma(s), \bdot \gamma(s))\ .
\end{equation*}

It is a standard result that whenever $F$ is $\lambda$-convex, the gradient flow for $F$ is $\lambda$-contracting in the sense that for all $x,y\in \cM$ and $t > 0$,
\begin{equation}\label{act3}
\ddt d_g^2(S_t(x),S_t(y))  \leq -2\lambda d_g^2(S_t(x),S_t(y)) \ .
\end{equation}

Otto and Westdickenberg \cite{OW05}  developed an approach to geodesic convexity that takes \eqref{act3} as its starting point. 
Let $\{\gamma(s)\}_{s\in [0,1]}$ be any smooth path in $\cM$ with $\gamma(0)= x$ and $\gamma(1) = y$.  
They use the gradient flow transformation $S_t$ to define a one-parameter family of paths $\gamma^t:[0,1]\to \cM$, $t\geq 0$ defined by
\begin{equation*}
\gamma^t(s) = S_t\gamma(s)\ .
\end{equation*}
Since $\gamma^t$ is admissible for the variational problem that defines $d_g(S_t(x),S_t(y))$, it is immediate that for each $t\geq 0$, 
\begin{equation}\label{act5}
d_g^2(S_t(x),S_t(y)) \leq \int_0^1 \left\| \frac{{\rm d}}{{\rm d}s}\gamma^t(s)\right\|^2_{g(\gamma^t(s))} \dd s \ .
\end{equation}
In the present smooth setting it is shown in \cite[(2.8) -- (2.11)]{DS08}  that if for all smooth curves
$\gamma:[0,1]\to\cM$, 
\begin{equation}\label{act6}
\frac{{\rm d}}{{\rm d}t}\bigg|_{0+}  \left( \left\| \frac{{\rm d}}{{\rm d}s}\gamma^t(s)\right\|^2_{g(\gamma^t(s))}\right)
\leq -2\lambda 
 \left\| \frac{{\rm d}}{{\rm d}s}\gamma^0(s)\right\|^2_{g(\gamma^0(s))}\ ,
\end{equation}
for all $s\in (0,1)$, then $F$ is geodesically $\lambda$-convex.

To see the connection between (\ref{act6}) and the contraction property, suppose that $x$ and $y$ are connected by a minimal geodesic $\gamma$ so that 
$$d_g^2(x,y) = \int_0^1 \left\| \frac{{\rm d}}{{\rm d}s}\gamma(s)\right\|^2_{g(\gamma(s))} \dd s\ .$$
(If $x$ and $y$ are sufficiently close, this is the case.)
Then (\ref{act5}) and (\ref{act6}) combine to yield 
\begin{equation*}
\ddt\bigg|_{0+}  d_g^2(S_t(x), S_t(y))  \leq -2\lambda 
d_g^2(x,y)\ ,
\end{equation*}
and then, provided that $S_t(x)$ and $S_t(y)$ continue to be connected by a minimal geodesic for all $t$, the semigroup property of $S_t$ yields the exponential $\lambda$-contractivity of the flow:
\begin{equation}\label{act8}
d_g(S_t(x),S_t(y))   \leq e^{-\lambda t}d_g(x,y)\ .
\end{equation}
The local argument in \cite{DS08} proves the geodesic $\lambda$-convexity of $F$ when (\ref{act6}) is valid for all smooth paths in $\cM$, and thus leads to (\ref{act8}) without any assumptions of geodesic completeness.

When (\ref{act6}) is valid for some $\lambda>0$, and hence also (\ref{act8}) for the same $\lambda$, $F$ has at most one fixed point $x_0$ in $\cM$, which is necessarily a strict minimizer of $F$ on $\cM$.  
We may normalize $F(x_0) = 0$, and then under the geodesic $\lambda$-convexity of $F$, is it well known that for all $x$,
\begin{equation}\label{act9}
\frac{{\rm d}}{{\rm d}t} F(S_t(x)) \leq - 2\lambda F(S_t(x))\ ,
\end{equation}
which gives us another way to measure the rate of convergence to the fixed point under the flow $S_t$. 

There is also a more direct route from \eqref{act6} to \eqref{act9}. 
If $\gamma(t)$ is given by the gradient flow of $F$ through $\gamma(t) = S_t(x)$, then
\begin{equation}\label{act10}
\ddt F(\gamma(t)) = -\|{\rm grad}_g F(\gamma(t))\|^2_{g(\gamma(t))} \ .
\end{equation}
Define the {\em energy function} $E$ associated to $F$ by
\begin{equation}\label{act10b}
E(x) = \|{\rm grad}_g F(x)\|^2_{g(x)}\ .
\end{equation}
Then (\ref{act6}) applied with $\gamma^t(s) = S_{s+t}(x)$ 
together with the semigroup property yields
\begin{equation*}
\frac{{\rm d}}{{\rm d}t} E(S_t(x)) \leq -2\lambda E(S_t(x))\ .
\end{equation*}
Hence (\ref{act6}) not only leads to the contractivity property (\ref{act8}), but also to the exponential convergence estimates
\begin{equation}\label{act12}
F(S_t(x)) \leq e^{-2\lambda t}F(x) \quad{\rm and}\quad E(S_t(x)) \leq e^{-2\lambda t}E(x)\ . 
\end{equation}
Moreover, combining (\ref{act12}) with  (\ref{act10}) and (\ref{act10b}), we obtain the inequality
\begin{equation}\label{act13}
F(x) \leq  \frac{1}{2\lambda}E(x)\ . 
\end{equation}
In our setting, when $F$ is a relative entropy function, (\ref{act13}) will be a 
{\em generalized  logarithmic Sobolev inequality}. 

The relations between (\ref{act8}) and the bounds in (\ref{act12}) and geodesic convexity of $F$ have all been discussed by Otto \cite{O01} as the basis of his approach to quantitative estimates on the rates of relaxation for solutions of the porous medium equation.

Thus, to prove geodesic convexity of $F$, and hence (\ref{act8}) and (\ref{act12}), it suffices to prove (\ref{act6}).  
This first derivative estimate can provide a much easier route to a proof of  $\lambda$-convexity of $F$ than direct calculation of the Hessian of $F$ followed by an estimate of its least eigenvalue. 
The point of view of Otto and Westdickenberg is that this approach can be especially fruitful in an infinite-dimensional setting (such as that of \cite{O01}) given all the regularity issues to go along with computing the Hessian of $F$. 
In the remainder of this section, we shall show that it is also quite fruitful in our finite-dimensional setting. 
Related work in the commutative setting can be found in \cite{EM12,EMT15,FM16,LiMi13,Ma11,Mi13}.
For the difficulties relating to direct computation and analysis of the Hessian even for the Fermi Ornstein-Uhlenbeck semigroup, see our previous paper \cite{CM13}.

\subsection{Geodesic convexity using intertwining relations}

For the rest of this section, let $\sigma\in \Dens_+$, and fix  $\cP_t = e^{t\cL}$, an ergodic QMS that satisfies the $\sigma$-DBC. 
Let $\cL$ be given in the standard form (\ref{genform2}), so that the data specifying 
 $\cL$ are the sets $\{V_j\}_{j\in \cJ}$ and  $\{\omega_j\}_{j\in \cJ}$.  
 Let $\nabla: \fH_{\aA} \to \fH_{\aA,\cJ}$ and $\dive: \fH_{\aA,\cJ} \to \fH_{\aA}$ be the associated non-commutative gradient and divergence (as opposed to the associated Riemannian gradient and divergence).

Let $\rho:[0,1]\to \Dens_+$ be a smooth path in $\Dens_+$, and define the one-parameter family of paths, $\rho^t(s)$, $(s,t) \in [0,1]\times[0,\infty)$ by 
\begin{equation*}
\rho^t(s)  = \cP_t^\dagger \rho(s)\ .
\end{equation*}
By what has been explained above, it we can prove that
\begin{equation}\label{act23}
\frac{{\rm d}}{{\rm d}t} \bigg|_{0+} 
\left( \left\| \frac{{\rm d}}{{\rm d}s}\rho^t(s)\right\|^2_{g(\rho^t(s))}\right)\leq -2\lambda 
 \left\| \frac{{\rm d}}{{\rm d}s}\rho^0(s)\right\|^2_{g(\rho^0(s))}
\end{equation}
for all smooth $\rho:[0,1]\to \cM$ and all $s\in (0,1)$, we will have 
proved the geodesic convexity of the relative entropy functional, and 
consequently, we shall have proved 
\begin{equation*}
D(\cP_t^\dagger \rho || \sigma) \leq e^{-2\lambda t}D(\rho || \sigma)\ .
\end{equation*}

We now present a simple sufficient condition for (\ref{act23}) that we 
shall be able to verify in a number of interesting examples.

\begin{defi} A semigroup $\vec \cP_t$ on $\fH_{\aA,\cJ}$ {\em intertwines with} a semigroup
$\cP_t$ on $\fH_{\aA}$ in case  for all $t>0$, and all $A \in \fH_{\aA}$,
\begin{equation}\label{intertwine} 
\nabla \cP_t A = \vec \cP_t \nabla A\ .
\end{equation}
\end{defi}
By duality, the intertwining relation $\nabla\circ  \cP_t = \vec{\cP_t} \circ\nabla$ 
implies the identity
\begin{align*}
\cP_t^\dagger  \dive (\bbA) = \dive(\vec{\cP_t}^\dagger \bbA)\;, \qquad \text{for } \bbA \in \fH_{\aA,\cJ} \ .
\end{align*}
We will be particularly interested in cases in which for some $\lambda \in \R$,
\begin{equation}\label{goodcase}
\vec \cP_t \bbA =  
(e^{-\lambda t} \cP_t A_1,\dots, e^{-\lambda t}\cP_t A_{|\cJ|})\ .
\end{equation}

\begin{remark}\label{LedouxProof} A classical example is provided by the 
Mehler formula for the  classical Ornstein-Uhlenbeck semigroup, which was 
first studied by Mehler in 1866.  For $\beta>0$, let 
$\gamma_\beta(x) = (\beta/2\pi)^{n/2}e^{-\beta |x|^2/2}$ be the centered Gaussian density on $\R^n$ with zero mean and variance $n/\beta$. For $t>0$ and bounded continuous functions $f$ on $\R^{n}$, define $P_t f$ by
\begin{equation}\label{Meh1}
P_tf(x) = \int_{\R^n} f(e^{-t}x + (1-e^{-2t})^{1/2}y) \gamma_\beta(y) \dd y \ .
\end{equation}
Then $P_t$ is a classical Markov semigroup; namely the Mehler or 
Ornstein-Uhlenbeck semigroup.  The dual semigroup $P_t^\dagger$ acting on 
probability densities $\rho$ on $\R^n$ is defined by
$$\int_{\R^n} P_t^\dagger\rho(x) f(x)  \dd x = \int_{\R^n}\rho(x) P_t f(x) \dd x$$
A change of variables yields the dual Mehler formula: 
\begin{equation*}
P_t^\dagger \rho(x) = \int_{\R^{n}} \rho \big(e^{-t}x - (1-e^{-2t})^{1/2}y\big) 
\gamma_\beta\big((1-e^{-2t})^{1/2}x + e^{-t}y\big)  \dd y\ .
\end{equation*}
A Taylor expansion in (\ref{Meh1}) and then integration by parts show that 
$f(x,t) := P_tf(x)$ and $\rho(x,t) := P_t^\dagger \rho(x)$ satisfy
\begin{equation*}
\frac{\partial}{\partial t} f(x,t) = 
\left(   \frac{1}{\beta} \dive - x\right) \nabla f(x,t)
\quad{\rm and}\quad \frac{\partial}{\partial t} \rho(x,t) =  
\dive \left(   \frac{1}{\beta} \nabla + x\right)\rho(x,t)\ .
\end{equation*}
It is immediate from (\ref{Meh1}) that
\begin{equation}\label{Meh4}
\nabla P_t f(x) = \ \vec P_t \nabla f(x) 
\end{equation}
where $\vec P_t( v_1, \dots v_n) (x) = e^{-t}( P_t v_1(x), \dots , P_t v_n(x)) $.
We shall see below that an identity similar to \eqref{Meh4} is readily proved for the Fermi and Bose Ornstein-Uhlenbeck semigroup.  
\end{remark}

Using \eqref{Meh4}, which is a direct analog of (\ref{intertwine}) and (\ref{goodcase}), Ledoux \cite[p. 447]{Le92} gave a very simple proof of the optimal logarithmic Sobolev inequality for the classical Ornstein-Uhlenbeck semigroup. 
A key element in his proof is the joint convexity of $(a,r) \mapsto |a|^2/r$ on $\R^n\times (0,\infty)$, for which we will need a suitable non-commutative analogue.

The latter is provided by a well-known convexity inequality for matrices, which asserts that, for all $\omega \in \R$, the mapping
\begin{align}\label{eq:convex-trace}
	 (\rho, A) \mapsto \langle  A, [\rho]_{{\omega}}^{-1}
 A\rangle_{\fH_\aA} 
   = \Tr\bigg[\int_0^\infty (t \one  + e^{-\omega/2} \rho)^{-1} A^* (t \one  + e^{\omega/2} \rho)^{-1} A \dd t \bigg] 
\end{align}
is jointly convex on $\Dens_+ \times \aA$; see \cite{HP12,HP12B}. Note that if $\rho$ and $A$ are scalars, the right-hand side reduces to $A^2/\rho$. 
The non-commutative convexity result ultimately derives from Lieb's concavity Theorem \cite{L73}. Since $\cP_t^\dagger$ is completely positive, it follows from \eqref{eq:convex-trace} that 
\begin{align}\label{eq:contractive-metric}
 \langle \cP_t^\dagger A, [\cP_t^\dagger\rho]_{{\omega}}^{-1}
 \cP_t^\dagger A\rangle_{\fH_\aA}	 \leq \langle  A, [\rho]_{{\omega}}^{-1} 
 A\rangle_{\fH_\aA} \ .
\end{align}
There is a well-developed theory of {\em monotone metrics} beginning with work of Chentsov and Morozova \cite{MoCh} for classical Markov processes and its non-commutative extension initiated by Petz \cite{Pe96}, and further developed in \cite{LeRu,HK99,PWPR06,HKPR12,TKRWV}. Other results from this theory will be useful in further developments.

Now consider any smooth path $\rho:[0,1]\to \Dens_+$, and for each $s\in (0,1)$ write
\begin{equation*}
\bdot \rho(s) = \dive \bbA(s)
\end{equation*}
where $\bbA(s)$ is the solution of $\bdot \rho(s) = \dive \bbA(s)$ that minimizes  
$\langle \bbA, [\rho]_{{\vec \omega }}^{-1}
\bbA\rangle_{\cL,\rho}$ so that, by the definitions in Section~\ref{Riemannian},
\begin{equation*}
g_{\cL,\rho}(\bdot \rho(s),\bdot \rho(s)) = 
\sum_{j\in \cJ} \langle A_j(s), [\rho(s)]_{{\omega_j }}^{-1}
A_j(s)\rangle_{\fH_\aA}\ .
\end{equation*}
Set $\rho^t(s) := \cP_t^\dagger \rho(s)$, and suppose that the semigroup $\vec \cP_t$ defined by \eqref{goodcase} intertwines with $\cP_t$. 
It follows that
$$
\frac{{\rm d}}{{\rm d}s}\rho^t(s) = \cP_t^\dagger \dive \bbA(s) = 
\dive \vec \cP_t^\dagger \bbA(s)\ .$$
Consequently, by \eqref{goodcase} and \eqref{eq:contractive-metric},
\begin{eqnarray}
\left\| \frac{{\rm d}}{{\rm d}s}\rho^t(s)\right\|^2_{g(\rho^t(s))}  &\leq& 
e^{-2\lambda t} \sum_{j\in \cJ} \langle \cP_t^\dagger A_j(s), 
[\cP_t^\dagger\rho(s)]_{{\omega_j }}^{-1}
\cP_t^\dagger A_j(s)\rangle_{\fH_\aA}\nonumber\\
&\leq& e^{-2\lambda t} \sum_{j\in \cJ} \langle  A_j(s), [\rho(s)]_{{\omega_j }}^{-1}
 A_j(s)\rangle_{\fH_\aA}  
 = e^{-2\lambda t} 
 \left\| \frac{{\rm d}}{{\rm d}s}\rho(s)\right\|^2_{g(\rho(s))}\ ,\nonumber
\end{eqnarray}
which clearly implies (\ref{act23}).
Altogether we have proved:

\begin{thm}\label{sumthm} 
Let $\sigma\in \Dens_+$, and let $\cP_t = e^{t\cL}$ be an ergodic QMS that satisfies the $\sigma$-DBC. 
Let $\nabla$ and $\dive$ denote the associated non-commutative gradient and divergence. 
Suppose that for some $\lambda> 0$, the semigroup $\vec \cP_t$ defined by (\ref{goodcase}) intertwines with 
 $\cP_t$. 
Then the relative entropy with respect to $\sigma$ is geodesically $\lambda$-convex on $\Dens_+$ for the Riemannian metric $(g_{\cL,\rho})_{\rho}$.
Moreover, the exponential convergence estimate
\begin{align*}
	D(\cP_t^\dagger \rho || \sigma) \leq e^{-2\lambda t}D(\rho || \sigma)
\end{align*}
holds, as well as the generalized logarithmic Sobolev inequality
\begin{align}\label{eq:gen-LSI}
	D(\rho || \sigma) \leq  \frac{1}{2\lambda} \tau\big[-\cL^\dagger(\rho)\big(\log \rho - \log \sigma\big)\big] \ .
\end{align}
\end{thm}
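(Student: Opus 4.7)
The plan is to reduce everything to the first-derivative criterion \eqref{act23} of Daneri--Savar\'e: once this is verified with constant $\lambda$, geodesic $\lambda$-convexity of $D(\cdot\|\sigma)$ on $(\Dens_+,g_\cL)$ follows, and the stated exponential decay and generalized logarithmic Sobolev inequality fall out of the abstract bounds \eqref{act12}--\eqref{act13} applied to $F(\rho)=D(\rho\|\sigma)$. Here $\sigma$ is the unique minimizer since it is the unique invariant state of the ergodic QMS.

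Concretely, I would fix an arbitrary smooth path $\rho:[0,1]\to\Dens_+$ and write its derivative in continuity-equation form $\bdot\rho(s)=\dive\bbA(s)$ where $\bbA(s)$ is the norm-minimizing representative identified in Theorem~\ref{thm:unique-vf}, so that
\begin{equation*}
\|\bdot\rho(s)\|^2_{g_{\cL,\rho(s)}}=\sum_{j\in\cJ}\bip{A_j(s),\,[\rho(s)]_{\omega_j}^{-1}A_j(s)}_{\fH_\aA}.
\end{equation*}
Setting $\rho^t(s):=\cP_t^\dagger\rho(s)$, the first step is to propagate this continuity equation through $\cP_t^\dagger$: dualizing the intertwining $\nabla\cP_t=\vec\cP_t\nabla$ gives $\cP_t^\dagger\circ\dive=\dive\circ\vec\cP_t^\dagger$, and by the special form \eqref{goodcase} one has $\vec\cP_t^\dagger\bbA(s)=e^{-\lambda t}(\cP_t^\dagger A_1(s),\dots,\cP_t^\dagger A_{|\cJ|}(s))$. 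Hence $\tfrac{d}{ds}\rho^t(s)=\dive(\vec\cP_t^\dagger\bbA(s))$, and since this representative is a priori only admissible (not necessarily the minimizer at $\rho^t(s)$), its weighted norm gives an upper bound on $\|\tfrac{d}{ds}\rho^t(s)\|^2_{g_{\cL,\rho^t(s)}}$.

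The second step is to control this upper bound. Here the key input is the monotonicity inequality \eqref{eq:contractive-metric}, applied componentwise for each $\omega_j$:
\begin{equation*}
\bip{\cP_t^\dagger A_j(s),[\cP_t^\dagger\rho(s)]_{\omega_j}^{-1}\cP_t^\dagger A_j(s)}_{\fH_\aA}\leq \bip{A_j(s),[\rho(s)]_{\omega_j}^{-1}A_j(s)}_{\fH_\aA}.
\end{equation*}
Combining this with the overall scalar factor $e^{-2\lambda t}$ from the two copies of $\vec\cP_t^\dagger$ yields $\|\tfrac{d}{ds}\rho^t(s)\|^2_{g_{\cL,\rho^t(s)}}\leq e^{-2\lambda t}\|\bdot\rho(s)\|^2_{g_{\cL,\rho(s)}}$, which differentiated at $t=0^+$ is exactly \eqref{act23} (equivalently \eqref{act6}). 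The Daneri--Savar\'e criterion then delivers geodesic $\lambda$-convexity.

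Finally, the two stated consequences are read off from the abstract theory: $F(\rho)=D(\rho\|\sigma)$ vanishes only at the unique minimizer $\sigma$, so \eqref{act12} gives $D(\cP_t^\dagger\rho\|\sigma)\leq e^{-2\lambda t}D(\rho\|\sigma)$, and \eqref{act13} gives $D(\rho\|\sigma)\leq \tfrac{1}{2\lambda}E(\rho)$ where, by Theorem~\ref{gradformula} and integration by parts against $\dive=-\nabla^\dagger$,
\begin{equation*}
E(\rho)=\|\mathrm{grad}_{g_\cL}D(\cdot\|\sigma)(\rho)\|^2_{g_{\cL,\rho}}=\bip{\nabla(\log\rho-\log\sigma),[\rho]_{\vec\omega}\nabla(\log\rho-\log\sigma)}_{\fH_{\aA,\cJ}}=\tau\big[-\cL^\dagger(\rho)(\log\rho-\log\sigma)\big],
\end{equation*}
using Theorem~\ref{gradflowent} in the final equality. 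The only nontrivial technical ingredient is the monotonicity \eqref{eq:contractive-metric}, but this is available as a consequence of Lieb's concavity theorem via the joint convexity \eqref{eq:convex-trace}; everything else is the clean transport of information through the intertwining relation.
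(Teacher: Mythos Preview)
Your proposal is correct and follows essentially the same route as the paper: write $\bdot\rho(s)=\dive\bbA(s)$ with the minimizing $\bbA$, push through $\cP_t^\dagger$ via the dualized intertwining, and then apply the componentwise monotonicity \eqref{eq:contractive-metric} together with the scalar factor $e^{-2\lambda t}$ to obtain \eqref{act23}. Your treatment is in fact slightly more explicit than the paper's in spelling out how the exponential decay and the generalized logarithmic Sobolev inequality follow from \eqref{act12}--\eqref{act13}, and in identifying $E(\rho)$ with $\tau[-\cL^\dagger(\rho)(\log\rho-\log\sigma)]$ via Theorems~\ref{gradformula} and~\ref{gradflowent}.
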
 

While it is a problem of ongoing research to extend Theorem~\ref{sumthm} to the infinite dimensional setting, the part of it concerning entropy and entropy production inequalities is relatively robust, as it relies most essentially on Lieb's convexity result, while the part of it concerning geodesic convexity is more involved and requires more work to generalize. Therefore, with regard to the infinite dimensional Bose-Ornstein-Uhlenbeck semigroup, we are not presently in a positions to make any statements about geodesic convexity of the entropy, but the situation is much better concerning entropy and entropy production.

Let $\mathcal{D}[\rho] = \tau\big[-\cL^\dagger(\rho)\big(\log \rho - \log \sigma\big)\big]$ be the entropy dissipation functional, which is minus the  derivative of 
$D(\cP_t^\dagger \rho || \sigma)$ at $t=0$.  By Theorem~\ref{gradflowent},
\begin{equation*}
- \cL^\dagger \rho = \sum_{j\in \cJ}  \partial_j^\dagger \Big( [\rho]_{\omega_j} \partial_j(\log \rho - \log \sigma)\Big) = {\rm div}({\bf A})\ .
\end{equation*}
The entropy dissipation functional $\mathcal{D}[\rho]$ is then given by
$$\mathcal{D}[\rho] =  \sum_{j\in \cJ} \langle  A_j, [\rho]_{{\omega_j }}^{-1}
 A_j\rangle_{\fH_\aA} \ .$$
 
 Now replace $\rho$ by $\rho(t) := \cP_t^\dagger \rho$ and let us assume that $\rho$ is in the domain of $\cL^\dagger$, a trivial assumption in the finite dimensional case.  Then 
 $$\frac{\rm d}{{\rm d}t} \rho(t) =  \cP_t^\dagger( \cL^\dagger \rho) =  \cP_t^\dagger({\rm div}({\bf A}))\ ,$$
 and assuming that $\vec \cP_t$ defined by \eqref{goodcase} intertwines with $\cP_t$, we have that 
 $${\bf A}(t) = e^{-\lambda t} (\cP_t^\dagger  A_1,\dots, \cP_t^\dagger A_{|\cJ|})\ .$$
 Therefore, by the convexity argument used in the proof of Theorem~\ref{sumthm}
 $$\mathcal{D}[\cP_t^\dagger \rho]   =  e^{-2\lambda t} \sum_{j\in \cJ} \langle \cP_t^\dagger A_j, 
[\cP_t^\dagger\rho]_{{\omega_j }}^{-1}
\cP_t^\dagger A_j\rangle_{\fH_\aA}  \leq  e^{-2\lambda t}  \mathcal{D}[ \rho]  \ .$$
By a standard argument, the generalized log-Sobolev inequality (\ref{eq:gen-LSI}) follows immediately for $\rho$ in the domain of $\cL^\dagger$. 
In summary, if one is interested more in entropy dissipation inequalities than the geodesic convexity of the entropy Theorem~\ref{gradflowent}
together with an intertwining relation allows one to bypass the Riemannian structure. This fact also underlines the utility of writing the evolution equation as gradient flow for the entropy, which is essential for the argument. 

In the rest of this section we explain how intertwining formulas may be proved. We consider two examples, already introduced, namely the Bose and Fermi Ornstein-Uhlenbeck semigroups. In the Fermi case, we are within the scope of the finite dimensional picture developed here, and we will be able to prove the geodesic convexity of the relative entropy. In the Bose case, we are in an infinite dimensional setting, and work remains to be done to rigorously prove the geodesic convexity in this case. However, by what has been explained in the preceding paragraphs, we shall obtain a rigorously valid generalized logarithmic Sobolev inequality.

\subsection{Intertwining via commutation formulas}

For both the Fermi and Bose Ornstein-Uhlenbeck semigroups $\cP_t$, there is a 
Mehler type formula for 
$\cP_t$ from which the intertwining can be readily checked. In the Fermi case, 
this can be found in formulas (4.1) and (4.2) of \cite{CL93}, and the formula in the Bose case is a simple adaptation of this.

Fortunately however, it is not necessary to find an explicit formula for the action of 
the semigroup $\cP_t$ to prove the intertwining identity and (\ref{goodcase}). In case where such identities are true, they can often be readily checked using the form of the generator ${\cL}$. 

\begin{lm}\label{odelm} Suppose that for some numbers $a_j$, $j\in \cJ$, 
\begin{align}\label{intertw2A}
[\partial_j,\cL]  = -a_j\partial_j \ 
\end{align}
for each $j\in \cJ$.  Then defining  $\vec \cP_t$ on $\fH_{\aA,\cJ}$ by
\begin{equation*}
\vec \cP_t(A_1,\dots,A_{|\cJ|}) = (e^{-ta_1} \cP_t A_1,\dots,e^{-ta_{|\cJ|}}\cP_t A_{|\cJ|})\ ,
\end{equation*}
we have the intertwining relation $\partial_j \cP_t = \vec \cP_t \partial_j$ on $\aA$. 
\end{lm}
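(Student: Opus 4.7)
The plan is to upgrade the infinitesimal intertwining $[\partial_j,\cL]=-a_j\partial_j$ into a finite-time intertwining by means of a linear ODE uniqueness argument. Equivalently, the hypothesis reads $\partial_j\cL=(\cL-a_j)\partial_j$, i.e.\ $\partial_j$ conjugates $\cL$ into the shifted operator $\cL-a_j$ on its range. Since $\cL$ commutes with $\cP_t=e^{t\cL}$, one expects $\partial_j\cP_t = e^{t(\cL-a_j)}\partial_j = e^{-ta_j}\cP_t\partial_j$. This is exactly the $j$-th component of the asserted identity $\nabla\cP_t=\vec\cP_t\nabla$.

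To make this precise, for fixed $j\in\cJ$ I would introduce the linear operator
\begin{equation*}
G_j(t) := \partial_j\cP_t - e^{-ta_j}\cP_t\partial_j
\end{equation*}
on $\fH_\aA$, note $G_j(0)=0$, and differentiate. Using $\tfrac{\mathrm{d}}{\mathrm{d}t}\cP_t=\cL\cP_t=\cP_t\cL$, the scalar factor $e^{-ta_j}$, and the hypothesis in the form $\partial_j\cL=(\cL-a_j)\partial_j$, one gets
\begin{equation*}
G_j'(t) = \partial_j\cL\cP_t + a_je^{-ta_j}\cP_t\partial_j - e^{-ta_j}\cP_t\cL\partial_j
       = (\cL-a_j)\partial_j\cP_t - e^{-ta_j}(\cL-a_j)\cP_t\partial_j -a_je^{-ta_j}\cP_t\partial_j + a_je^{-ta_j}\cP_t\partial_j,
\end{equation*}
which after cancellation collapses to $G_j'(t)=(\cL-a_j)G_j(t)$.

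Since we are in the finite-dimensional space $\fH_\aA$, uniqueness of solutions of the linear ODE $X'(t)=(\cL-a_j)X(t)$, $X(0)=0$, then forces $G_j(t)\equiv 0$ for all $t$, proving $\partial_j\cP_t=e^{-ta_j}\cP_t\partial_j$. Assembling these component identities over $j\in\cJ$ yields $\nabla\cP_t=\vec\cP_t\nabla$ in the sense of Definition of $\vec\cP_t$ given above. I do not expect any real obstruction here; the only point that deserves care is the cancellation in the derivative computation, which uses nothing deeper than $\cL\cP_t=\cP_t\cL$ and that scalars commute with everything. An alternative would be to observe that the commutation hypothesis iterates to $\partial_j\cL^n=(\cL-a_j)^n\partial_j$ for all $n\ge 0$, and then sum the power series for $\cP_t=e^{t\cL}$ termwise, which is justified immediately in finite dimension; this yields the same conclusion without solving an ODE.
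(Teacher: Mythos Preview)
Your proof is correct and follows essentially the same ODE uniqueness argument as the paper: the paper sets $A(t)=\partial_j\cP_tA$, shows it solves $A'(t)=(\cL-a_j)A(t)$, and identifies $e^{ta_j}A(t)$ with $\cP_t\partial_jA$; you instead work with the difference $G_j(t)=\partial_j\cP_t-e^{-ta_j}\cP_t\partial_j$ and show it solves the same linear ODE with zero initial data. These are two packagings of the same idea, and your power-series alternative is also valid.
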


\begin{proof}
Let $A\in \aA$ and define $A(t) = \partial_j \cP_t A$. Then $A(0) =\partial_j A$ and 
$$\frac{{\rm d}}{{\rm d}t}A(t) = \partial_j \cL \cP_t A = 
\cL \partial_j \cP_t A  - a_j \partial_j \cP_t A = [\cL - a_j I]A(t)\ .$$
It follows that $t \mapsto e^{ta_j}A(t)$ is the unique solution of
${\displaystyle \frac{{\rm d}}{{\rm d}t}X(t) = \cL X(t)}$   with 
$X(0) =\partial_j A$, which is of course $\cP_t \partial_j A$. Therefore,
$\partial_j\cP_t A  =   e^{-ta_j} \cP_t \partial_j A $.
\end{proof}

\begin{thm}\label{BoseEnt} Let $\cP_t$ be the Bose Ornstein-Uhlenbeck semigroup with generator
$\cL_\beta$ given in (\ref{bose3}), and let $\sigma_\beta$ be its invariant state. Then for all $\rho\in \Dens_+$, 
$$D(\cP_t \rho||\sigma_\beta) \leq e^{-2\sinh(\beta/2) t} D( \rho||\sigma_\beta)\ .$$
\end{thm}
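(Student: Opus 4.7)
The plan is to combine the commutation identity \eqref{intwbose} with Lemma~\ref{odelm} and then run the entropy/entropy-production half of the argument that follows Theorem~\ref{sumthm}. The Riemannian-geometric portion of that theorem is not directly available in the infinite-dimensional Bose setting, but, as the authors observe, the convexity-based proof of the entropy-dissipation decay is. I would carry out each step on the dense core of analytic vectors for $\cL_\beta$ produced in \cite{CFL00}, where \eqref{intwbose} is rigorously available, and then extend by density.

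First, I would apply Lemma~\ref{odelm} with $a_1 = a_2 = \sinh(\beta/2)$ to upgrade \eqref{intwbose} to the intertwining $\partial_j \cP_t A = e^{-\sinh(\beta/2)\,t}\,\cP_t \partial_j A$ for $j=1,2$. Defining $\vec\cP_t$ as in \eqref{goodcase} with $\lambda = \sinh(\beta/2)$, one has $\nabla\cP_t = \vec\cP_t\nabla$ and dually $\cP_t^\dagger\circ\dive = \dive\circ\vec\cP_t^\dagger$. Next, by Theorem~\ref{gradflowent} applied to \eqref{bose3} (with $\omega_1 = -\beta$, $\omega_2 = \beta$), we may write $-\cL_\beta^\dagger\rho = \dive\mathbf{A}(\rho)$ with $\mathbf{A}_j(\rho) := [\rho]_{\omega_j}\partial_j(\log\rho - \log\sigma_\beta)$, so the entropy-dissipation functional is
\begin{equation*}
\mathcal{D}[\rho] := -\ddt\bigg|_{t=0} D(\cP_t\rho\,\|\,\sigma_\beta)
= \sum_{j=1}^{2}\langle \mathbf{A}_j(\rho), [\rho]_{\omega_j}^{-1}\mathbf{A}_j(\rho)\rangle_{\fH_\aA}.
\end{equation*}
The heart of the argument is then to evaluate $\mathcal{D}[\cP_t\rho]$: the dual intertwining forces the vector field produced by Theorem~\ref{gradflowent} at $\cP_t\rho$ to coincide, up to the scalar factor $e^{-\sinh(\beta/2)\,t}$, with the componentwise action of $\cP_t^\dagger$ on $\mathbf{A}(\rho)$; applying the joint convexity \eqref{eq:convex-trace} in the form of the monotonicity inequality \eqref{eq:contractive-metric} under the CPTP map $\cP_t^\dagger$ then yields $\mathcal{D}[\cP_t\rho] \leq e^{-2\sinh(\beta/2)\,t}\,\mathcal{D}[\rho]$.

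Finally, integrating this dissipation decay from $t$ to $\infty$ and invoking ergodicity (so that $D(\cP_s\rho\|\sigma_\beta)\to 0$ as $s\to\infty$) yields the modified logarithmic Sobolev inequality $D(\rho\|\sigma_\beta)\leq\tfrac{1}{2\sinh(\beta/2)}\mathcal{D}[\rho]$, and combining this with $\ddt D(\cP_t\rho\|\sigma_\beta) = -\mathcal{D}[\cP_t\rho]$ together with Gr\"onwall's lemma delivers the claimed exponential decay of relative entropy. The main difficulty is technical rather than conceptual: each of the ingredients above is established in finite dimensions in the excerpt, so one must verify that they continue to make sense on a sufficiently large invariant core, and the analytic-vector core of \cite{CFL00} supplies such a domain; the extension to arbitrary $\rho\in\Dens_+$ then follows from lower semicontinuity of the relative entropy and of the quadratic form in \eqref{eq:convex-trace}.
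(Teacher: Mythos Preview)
Your approach is essentially the paper's: invoke the commutation identity \eqref{intwbose}, apply Lemma~\ref{odelm} to obtain the intertwining with $\lambda=\sinh(\beta/2)$, and then run the entropy--entropy-production argument laid out in the paragraphs following Theorem~\ref{sumthm}. The technical care you take with the analytic-vector core from \cite{CFL00} matches the paper's remark that it suffices to work with $\rho$ in the domain of $\cL_\beta^\dagger$.

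One step needs tightening. You assert that the dual intertwining forces the vector field $\mathbf{A}(\cP_t^\dagger\rho)$ produced by Theorem~\ref{gradflowent} to \emph{coincide} with $e^{-\lambda t}\cP_t^\dagger\mathbf{A}(\rho)$. The intertwining only shows these two vector fields have the same \emph{divergence}; they need not be equal. What saves the argument is that $\mathcal{D}[\cP_t^\dagger\rho]$, being the squared metric norm of $\dot\rho(t)$, is the \emph{minimum} of $\sum_j\langle B_j,[\rho(t)]_{\omega_j}^{-1}B_j\rangle$ over all $\mathbf{B}$ with $\dive\mathbf{B}=-\cL_\beta^\dagger\rho(t)$. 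The transported field $e^{-\lambda t}\cP_t^\dagger\mathbf{A}(\rho)$ is merely a competitor, giving
\[
\mathcal{D}[\cP_t^\dagger\rho]\;\le\; e^{-2\lambda t}\sum_j\big\langle \cP_t^\dagger A_j,\;[\cP_t^\dagger\rho]_{\omega_j}^{-1}\cP_t^\dagger A_j\big\rangle_{\fH_\aA}\;\le\; e^{-2\lambda t}\,\mathcal{D}[\rho],
\]
where the second inequality is \eqref{eq:contractive-metric}. This is exactly how the proof of Theorem~\ref{sumthm} proceeds (note the first $\le$ there, not $=$). With this correction your argument goes through and matches the paper's.
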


\begin{proof} Using (\ref{intwbose}), we may apply Lemma~\ref{odelm}, and then the remarks following Theorem~\ref{sumthm}. (Note that for all $t>0$,
$\cP_t^\dagger$ is in the domain of $\cL^\dagger$ so that the generalized log-Sobolev inequality is valid.)
\end{proof}

Results in \cite[Appendix D]{HKV16} show that the constant $2\sinh(\beta/2)$ in Theorem~\ref{BoseEnt} cannot be improved.

We may make a similar application of Lemma~\ref{odelm}, and then Theorem~\ref{sumthm} itself  to the Fermi Ornstein-Uhlenbeck semigroup. 
However, in this case, it is not the differential structure in terms of the derivations $\partial_j$ for which we have \eqref{intertw2A}, but the skew derivations $\sder_j$ and $\overline{\sder}_j$. 
This was proved in (\ref{fermint1}) and (\ref{fermint2}). 
However, the metric can be written in terms of $\sder_j$ and $\overline{\sder}_j$ just as well, bringing in the principle automorphism $\Gamma$, and indeed, this is how the metric was written in \cite{CM13}. 
This permits us to argue as above in the Bose case, and we conclude:

\begin{thm}\label{FermiEnt} For $\beta \geq 0$, let $\cP_t$ be the Fermi Ornstein-Uhlenbeck semigroup with generator
$\cL_\beta$ given in (\ref{Lbeta}), and let $\sigma_\beta$ be its invariant state. Then for all $\rho\in \Dens_+$, 
$$D(\cP_t \rho||\sigma_\beta) \leq e^{-2\lambda_\beta t} D( \rho||\sigma_\beta)$$
where
$\lambda_\beta = \min\{ \cosh(\beta e_j/2)\ : \  j=1,\dots, m\}$.  Moreover, the realtive entropy functional $\rho\mapsto D( \rho||\sigma_\beta)$
is geodesically $\lambda_\beta$ convex in the Riemannain metric associated to $\cL_\beta$.
\end{thm}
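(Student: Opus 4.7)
The plan is to reduce Theorem~\ref{FermiEnt} to the general framework of Theorem~\ref{sumthm}, imitating the Bose derivation of Theorem~\ref{BoseEnt} but with one twist: the commutation \eqref{intertw2A} required by Lemma~\ref{odelm} \emph{fails} for the derivations $\partial_j A=[V_j,A]$ that were used to set up $g_{\cL_\beta}$ in Section~\ref{Riemannian}, while it \emph{does} hold for the skew derivations $\sder_j$, $\overline{\sder}_j$ with eigenvalues $\cosh(\beta e_j/2)$; this is precisely the content of \eqref{fermint1}-\eqref{fermint2}. So the strategy is (i) to reformulate the Riemannian metric $g_{\cL_\beta}$ in the skew-derivation calculus, and (ii) to run the intertwining-plus-convexity argument of Theorem~\ref{sumthm} in that reformulation, with the modification that different components carry slightly different rates $\cosh(\beta e_j/2)$ all bounded below by $\lambda_\beta$.

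For step (i) I would follow the infinite-temperature blueprint \eqref{equiv1}-\eqref{equiv3}: via the identities $\partial_j A = 2W\sder_j A$ and $\partial_j^\dagger A = -2W\overline{\sder}_j A$, each occurrence of $[\rho]_{\omega_j}$ in the continuity equation $\bdot\rho(s)=-\dive([\rho]_{\vec\omega}\nabla U)$ and in the tangent norm $\|\bdot\rho(s)\|^2_{g_{\cL_\beta,\rho}}$ gets converted into a \emph{twisted} multiplication operator of the form
\begin{align*}
T_{\rho,\omega}(A) = \int_0^1 e^{\omega(1/2-s)}\,\Gamma(\rho^s)\,A\,\rho^{1-s}\,\dd s\ ,
\end{align*}
where $\Gamma(\cdot)=W\cdot W$ is the principal automorphism. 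For step (ii), the proof of Lemma~\ref{odelm} applies verbatim to \eqref{fermint1}-\eqref{fermint2} (first on the eigenbasis $\{K_\aa\}$, extended by linearity) to give
\begin{align*}
\sder_j \cP_t = e^{-t\cosh(\beta e_j/2)}\cP_t\sder_j\ , \qquad \overline{\sder}_j \cP_t = e^{-t\cosh(\beta e_j/2)}\cP_t\overline{\sder}_j\ .
\end{align*}
These are the skew-derivation analogues of \eqref{goodcase}. Feeding them into the contractivity chain in the proof of Theorem~\ref{sumthm} requires two further ingredients: joint convexity of $(\rho,A)\mapsto\ip{A,T_{\rho,\omega}^{-1}A}$, which follows from Lieb's concavity theorem exactly as in \eqref{eq:convex-trace} since $\Gamma$ is a $*$-automorphism (and hence completely positive); and the compatibility $\cP_t^\dagger\circ\Gamma=\Gamma\circ\cP_t^\dagger$, which follows from direct inspection of \eqref{Lbeta} using $WV_j=Z_j$ and $V_j^*W=Z_j^*$ to verify $\Gamma(V_j^*AV_j)=V_j^*\Gamma(A)V_j$ and the analogous identities for the remaining terms. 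Combining these with the uniform bound $e^{-2t\cosh(\beta e_j/2)}\le e^{-2t\lambda_\beta}$ yields \eqref{act23} with $\lambda=\lambda_\beta$, and Theorem~\ref{FermiEnt} then follows from the Otto-Westdickenberg criterion together with the exponential entropy bound \eqref{act12}.

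The main obstacle is the bookkeeping of step (i): carefully tracking how the factors of $W$ get absorbed into $\Gamma$ when each instance of $[\rho]_{\omega_j}$ is rewritten in the skew-derivation calculus, and verifying that the resulting $T_{\rho,\omega}$ inherits joint convexity. This is essentially a finite-temperature extension of \eqref{equiv1}-\eqref{equiv3}, complicated only by the extra weights $e^{\omega_j(1/2-s)}$ and by the need to handle both $\sder_j$ and $\overline{\sder}_j$; once this is in place, no conceptual novelty beyond Theorem~\ref{sumthm} is needed, and both the geodesic $\lambda_\beta$-convexity of $D(\cdot\|\sigma_\beta)$ and the asserted exponential entropy decay follow at once.
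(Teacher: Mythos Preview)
Your proposal is correct and follows essentially the same route as the paper: switch from the derivations $\partial_j$ to the skew derivations $\sder_j,\overline{\sder}_j$, use the commutation relations \eqref{fermint1}--\eqref{fermint2} to get intertwining, rewrite the metric in the skew-derivation calculus (as in \cite{CM13} and \eqref{equiv1}--\eqref{equiv3}), and then run the argument of Theorem~\ref{sumthm}. The paper's own proof is terse and simply points to these ingredients; you have spelled out the needed extra checks (the twisted multiplication $T_{\rho,\omega}$, its joint convexity via Lieb, the commutation $\Gamma\circ\cP_t^\dagger=\cP_t^\dagger\circ\Gamma$, and the passage from component-wise rates $\cosh(\beta e_j/2)$ to the uniform bound $\lambda_\beta$), all of which are indeed required and go through as you indicate.
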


\subsection{Talagrand type inequalities}  

Our final results in this section extend a result from outr earlier paper \cite{CM13} to the present more generla setting. 
The proof is step-for-step the one from our previous paper, with only minor modifications, and we shall thereofre be brief. 
However, it is worth recording the more general result since this subject has recently attracted the attention of other researchers \cite{JZ,RD}. 

The connection between logarithmic Sobolev inequalities and transport inequalities of Talagrand type \cite{Tal96} was originally discovered and developed by Otto and Villani \cite{OV00}. The fact that a Talgrand type inequlaity holds for our transport metric is further evidence that it is indeed a {\em bona-fide} transport metric.   

\begin{thm}[Talagrand type inequality]\label{tal}  Let $\cL$ be the generator of an ergodic QMS that satisifes the $\sigma$-DBC
with respect to $\sigma\in \Dens_+$. Suppose that the generalized logarithmic Sobolev inequality 
(\ref{eq:gen-LSI}) is valid for some $\lambda>0$. 
Let $d(\rho_1,\rho_2)$ denote the  Riemannain distance on $\Dens_+$ associated to $\cL$.  For all $\rho\in \Dens_+$, 
\begin{equation}\label{tal1}
d(\rho,\sigma) \leq  \sqrt{\frac{2D(\rho||\sigma)}{\lambda}}\ .
\end{equation}
\end{thm}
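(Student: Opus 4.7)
The plan is to follow the classical Otto--Villani strategy, using the fact -- established in Theorem~\ref{gradflowentgen} -- that the semigroup $\cP_t^\dagger$ is gradient flow for $D(\cdot\|\sigma)$ in the metric $g_{\cL}$, together with the generalized logarithmic Sobolev inequality \eqref{eq:gen-LSI}. The idea is to bound $d(\rho,\sigma)$ by the length of the gradient flow curve connecting $\rho$ to $\sigma$, and then estimate this length in terms of $D(\rho\|\sigma)$ by an elementary one-dimensional computation.

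First, I would fix $\rho\in\Dens_+$ and set $\rho(t) := \cP_t^\dagger \rho$ for $t\geq 0$. By ergodicity of $\cP_t$, one has $\rho(t)\to \sigma$ as $t\to\infty$, and $\rho(t)\in\Dens_+$ for all $t\geq 0$. Since $g_\cL$ is smooth on $\Dens_+$, the curve $t\mapsto \rho(t)$, $t\in[0,\infty)$, is an admissible curve from $\rho$ to $\sigma$ in the Riemannian sense (after reparametrization on a finite interval, if one prefers, using a diffeomorphism $[0,1)\to[0,\infty)$), so the definition of the Riemannian distance gives
\begin{equation*}
d(\rho,\sigma) \leq \int_0^\infty \|\bdot\rho(t)\|_{g_{\cL,\rho(t)}}\dd t\ .
\end{equation*}

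Next, since $\rho(t)$ solves the gradient flow equation for $F(\cdot) := D(\cdot\|\sigma)$ in the metric $g_\cL$, the standard gradient flow identity (combining \eqref{act10} and \eqref{act10b}) gives
\begin{equation*}
\|\bdot\rho(t)\|_{g_{\cL,\rho(t)}}^2 = -\ddt F(\rho(t)) = \mathcal{D}[\rho(t)]\ ,
\end{equation*}
where $\mathcal{D}[\rho] = \tau[-\cL^\dagger(\rho)(\log\rho - \log\sigma)]$ is the entropy dissipation. Writing $f(t) := F(\rho(t)) = D(\rho(t)\|\sigma)$, we have $-f'(t) = \mathcal{D}[\rho(t)] \geq 0$ and hence $\|\bdot\rho(t)\|_{g_{\cL,\rho(t)}} = \sqrt{-f'(t)}$.

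The generalized logarithmic Sobolev inequality \eqref{eq:gen-LSI} then reads $2\lambda f(t) \leq -f'(t)$, which yields $\sqrt{-f'(t)}\geq \sqrt{2\lambda f(t)}$ and therefore
\begin{equation*}
\sqrt{-f'(t)} = \frac{-f'(t)}{\sqrt{-f'(t)}} \leq \frac{-f'(t)}{\sqrt{2\lambda\, f(t)}}\ .
\end{equation*}
Integrating and using that $f(t)\to 0$ as $t\to\infty$ (which follows from the exponential decay implied by \eqref{eq:gen-LSI}), we obtain
\begin{equation*}
\int_0^\infty \sqrt{-f'(t)}\dd t \leq \frac{1}{\sqrt{2\lambda}}\int_0^\infty \frac{-f'(t)}{\sqrt{f(t)}}\dd t = \frac{1}{\sqrt{2\lambda}}\cdot 2\sqrt{f(0)} = \sqrt{\frac{2D(\rho\|\sigma)}{\lambda}}\ ,
\end{equation*}
which combined with the previous display gives \eqref{tal1}.

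The main delicate point is the passage to the limit $t\to\infty$, specifically ensuring that the improper integral $\int_0^\infty \|\bdot\rho(t)\|_{g_{\cL,\rho(t)}}\dd t$ bounds $d(\rho,\sigma)$. This is guaranteed by the exponential decay of $f(t)$ under \eqref{eq:gen-LSI} (which makes the integral finite) together with the fact that $\rho(t)\to\sigma$ inside $\Dens_+$; the rest is routine.
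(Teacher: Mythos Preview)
Your proof is correct and follows the same overall Otto--Villani strategy as the paper: bound $d(\rho,\sigma)$ by the arclength of the gradient flow curve $t\mapsto \cP_t^\dagger\rho$, use the gradient flow identity $\|\bdot\rho(t)\|_{g_{\cL,\rho(t)}}^2 = -f'(t)$ with $f(t)=D(\rho(t)\|\sigma)$, and then estimate $\int_0^\infty\sqrt{-f'(t)}\,\dd t$ using the logarithmic Sobolev inequality.

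The difference lies only in this last estimate. The paper chooses a sequence of times $t_k$ with $f(t_k)=e^{-k\varepsilon}f(0)$, applies Cauchy--Schwarz on each interval $[t_{k-1},t_k]$ together with the bound $t_k-t_{k-1}\le \varepsilon/(2\lambda)$ coming from exponential entropy decay, sums the resulting geometric series, and sends $\varepsilon\to 0$. You instead use the pointwise inequality $\sqrt{-f'(t)}\le(-f'(t))/\sqrt{2\lambda f(t)}$ (which follows directly from $-f'\ge 2\lambda f$) and integrate in closed form via the substitution $u=f(t)$. Your route is shorter and avoids the limiting procedure; the paper's discretization is a bit more robust in that it only uses the integrated exponential decay $f(t)\le e^{-2\lambda t}f(0)$ rather than the pointwise differential inequality, but in the present smooth finite-dimensional setting both are available and equivalent. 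One small presentational point: the identity $\sqrt{-f'}=(-f')/\sqrt{-f'}$ tacitly assumes $-f'(t)>0$; this holds for all $t$ whenever $\rho\neq\sigma$ (since then $f(t)>0$ and the LSI gives $-f'(t)\ge 2\lambda f(t)>0$), and the case $\rho=\sigma$ is trivial.
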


\begin{proof} Given $\rho\in \Dens_+$, define 
$\rho(t) = \cP_t\rho$
for $t\in (0,\infty)$.
Since $\lim_{t\to\infty}\rho(t) = \sigma$,  it follows that
$$d(\rho,\sigma) \leq  {\rm arclength}[\rho(\cdot)] =   \int_0^\infty \sqrt{ g_{\rho(t)}(\dot \rho(t), \dot \rho(t))}\dd t\ .$$
Since the evolution described by $t\mapsto \cP_t^\dagger \rho$ is gradient flow for the relative entropy, 
${\displaystyle g_{\rho(t)}(\dot \rho(t), \dot \rho(t)) = -\frac{{\rm d}}{{\rm d t}}D(\rho(t)||\sigma)}$
so that 
for any $0 \leq t_1 < t_2< \infty$,
\begin{equation}\label{tala}
\int_{t_1}^{t_2} \sqrt{ g_{\rho(t)}(\dot \rho(t), \dot \rho(t))}\dd t \leq \sqrt{t_2-t_1}
 \sqrt{
 D(\rho(t_1)||\sigma) - D(\rho(t_2)||\sigma)
 }\ .
\end{equation}
Fix any $\epsilon>0$. Define the sequence of times $\{t_k\}$, $k \in \N$, 
$$D(\rho(t_k)||\sigma) = e^{-k\epsilon}D(\rho||\sigma)\ .$$
(Since $t\mapsto D(\rho(t)||\sigma)$ is strictly decreasing, $t_k$ is well defined.)
Since $D(\rho(t)||\sigma) \leq e^{-2\lambda t} D(\rho||\sigma)$, for each $k$,

$$t_k - t_{k-1} \leq \frac{\epsilon}{2\lambda}\ .$$
Then by (\ref{tala}), with this choice of $\{t_k\}$, 
\begin{align*}
\int_{t_{k-1}}^{t_k} \sqrt{ g_{\rho(t)}{\rho(t)}(\dot \rho(t), \dot \rho(t))}\dd t 
& \ \leq \
 \sqrt{\frac{\epsilon}{2\lambda}( e^{-(k-1)\epsilon} - e^{-k\epsilon}){D(\rho||\sigma)}}
\\& \ = \ \sqrt{\frac{D(\rho||\sigma)}{2\lambda}} e^{-k\epsilon/2}\sqrt{ \epsilon (e^{\epsilon} -1)}\ .
\end{align*}
Since
$$\lim_{\epsilon\to 0}\left(\sum_{k=1}^\infty e^{-k\epsilon/2}\sqrt{ \epsilon (e^{\epsilon} -1)} \right) =
\lim_{\epsilon\to0}\left(\sum_{k=1}^\infty  e^{-k\epsilon/2} \epsilon \right) = \int_0^\infty e^{-x/2}\dd x =2\ ,$$
we obtain the desired bound.  
\end{proof}

\appendix

\section{Proof of Theorem~\ref{strucB}}  

In this appendix we present a simple and self-contained proof of Theorem~\ref{strucB}.
The starting point is an isometry that is crucial to the characterization of quantum Markov semigroup generators given by Gorini, Kossakowski and Sudarshan \cite{GKS76}:

For any finite dimensional Hilbert space $\fH$, let $\cC_2(\fH)$    denote the linear operators from $\fH$ to $\fH$ 
equipped with the {\em normalized} Hilbert-Schmidt inner product $\langle A,B\rangle_{\cC_2(\fH)} = ({\rm dim}(\H))^{-1}\tr[A^*B]$
 so that $\|\one\|_{\fH} = 1$. As above, we use $\dagger$ for the Hermitian adjoint in $\cC_2(\fH)$.  A special case deserves a special notation: let $\fH_n$ denote the $n\times n$ complex 
 matrices $\cM_n(\C)$ equipped with this same normalized Hilbert-Schmidt inner product. 
 
A particular orthonormal basis in $\fH_n$ plays a distinguished role in what follows: For $1 \leq i,j \leq n$, let $E_{i,j}$
denote the $n\times n$ matrix whose $i,j$ entry is $1$, and whose other entries are all $0$. If $\{e_1,\dots,e_n\}$ is the standard basis of $\C^n$, then $E_{i,j}$ is the rank one operator that is written as $|e_i \bk e_j|$ in a standard quantum mechanical notation introduced before.  In this notation, one has
\begin{equation}\label{choi1}
E_{i,j} \otimes E_{i,j} = (|e_i \bk e_j|)\otimes (|e_i \bk e_j|) = |e_i\otimes e_i \bk e_j\otimes e_j|\ .
\end{equation}
It follows that 
\begin{equation}\label{choi2}
\frac1n \sum_{i,j=1}^n E_{i,j} \otimes E_{i,j} = |\Psi \bk \Psi|  \quad{\rm where}\quad \Psi = \frac{1}{\sqrt{n}} \sum_{j=1}^n e_j\otimes e_j
\end{equation}
is a rank one projection in $\C^n\otimes \C^n$, and, in particular, is positive. This observation is due to Choi, and some of the simple but  fundamental conclusions he drew from it are related below. More immediately, $\{E_{i,j}\}_{1\leq i,j \leq n}$ is an orthonormal basis of $\fH_n$ called the {\em matrix unit basis}.

There is a natural  identification of $\cC_2(\fH_n)$ with 
$\fH_n\otimes \fH_n$ that takes advantage of the multiplication on 
$\fH_n$: For $A,B \in \fH_n$, define
the operator $\#(A\otimes B): \fH_n \to \fH_n$  by 
$$ \#(A\otimes B) : \cM_n(\C) \to \cM_n(\C) \ , \qquad \#(A\otimes B)X = AXB \ , $$ for all $X\in \fH_n$. It follows that the adjoint of $\#(A\otimes B)$ as an operator on $\fH_n$ is given by
	$\big( \#(A\otimes B) \big)^\dagger = \#(A^* \otimes B^*)$.
Moreover, 
\begin{align}\label{eq:sharp-trace}
	\tr[\#(A\otimes B)] = \tr[A] \tr[B] \ . 
\end{align}
The left-hand side of (\ref{eq:sharp-trace}) involves the trace for operators on $\fH_n$, whereas the right-hand side involves the trace for operators on $\C^n$.  

\begin{lm}\label{GKSlem}
Let $\{F_\alpha\}$ and $\{G_\beta\}$ be two orthonormal bases of $\fH_n$, so that $\{F_\alpha\otimes G_\beta\}$ is an orthonormal basis of $\fH_n\otimes \fH_n$. 
Then $\{ \#(F_\alpha\otimes G_\beta)\}$ is orthonormal in $\cC_2(\fH_n)$. 
In particular, the map $\#$ is unitary  from $\fH_n\otimes \fH_n$ into $\cC_2(\fH_n)$. 
\end{lm}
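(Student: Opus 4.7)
The plan is a direct orthogonality computation, relying on the multiplicative structure of $\#$ together with the trace identity \eqref{eq:sharp-trace} that has already been recorded.

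First, I would establish the composition rule
\begin{equation*}
\#(A\otimes B)\circ \#(C\otimes D) = \#(AC\otimes DB),
\end{equation*}
which is immediate from the definition: both sides send $X\in \fH_n$ to $ACXDB$. Combined with the adjoint formula $\#(A\otimes B)^\dagger = \#(A^*\otimes B^*)$, this gives
\begin{equation*}
\#(F_\alpha\otimes G_\beta)^\dagger \circ \#(F_{\alpha'}\otimes G_{\beta'})
= \#(F_\alpha^* F_{\alpha'}\otimes G_{\beta'}G_\beta^*).
\end{equation*}

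Next, I would take the trace of this operator on $\fH_n$ using \eqref{eq:sharp-trace}, which expresses it as the product of the ordinary $n\times n$ traces:
\begin{equation*}
\Tr_{\fH_n}\bigl[\#(F_\alpha^* F_{\alpha'}\otimes G_{\beta'}G_\beta^*)\bigr]
= \tr[F_\alpha^* F_{\alpha'}]\,\tr[G_{\beta'}G_\beta^*].
\end{equation*}
Dividing by the normalization $\dim \fH_n = n^2$ of the inner product on $\cC_2(\fH_n)$ gives
\begin{equation*}
\langle \#(F_\alpha\otimes G_\beta),\#(F_{\alpha'}\otimes G_{\beta'})\rangle_{\cC_2(\fH_n)}
= \frac{1}{n^2}\tr[F_\alpha^* F_{\alpha'}]\,\tr[G_{\beta'}G_\beta^*].
\end{equation*}

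Finally, I would invoke the orthonormality of $\{F_\alpha\}$ and $\{G_\beta\}$ in $\fH_n$, which, because the inner product on $\fH_n$ carries its own normalization $1/n$, reads $\tr[F_\alpha^* F_{\alpha'}] = n\,\delta_{\alpha,\alpha'}$ and $\tr[G_{\beta'}G_\beta^*] = n\,\delta_{\beta,\beta'}$. The two factors of $n$ exactly cancel the $1/n^2$, yielding $\delta_{\alpha,\alpha'}\delta_{\beta,\beta'}$, which matches the inner product of $F_\alpha\otimes G_\beta$ and $F_{\alpha'}\otimes G_{\beta'}$ in the tensor product Hilbert space. Thus $\#$ is an isometry on an orthonormal basis, hence on all of $\fH_n\otimes\fH_n$; since $\dim(\fH_n\otimes\fH_n) = n^4 = \dim \cC_2(\fH_n)$, it is unitary.

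There is no real obstacle here — the only thing to keep track of is the interplay between the three different normalizations (the $1/n$ in $\fH_n$, the $1/n^2$ in $\cC_2(\fH_n)$, and the unnormalized trace appearing in \eqref{eq:sharp-trace}), which conveniently balance.
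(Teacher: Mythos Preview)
Your proof is correct and takes a genuinely different route from the paper. The paper computes the inner product by expanding the $\cC_2(\fH_n)$-trace over the matrix unit basis $\{E_{i,j}\}$ and manipulating the resulting index sums until they factor as $n^{-1}\tr[F_\alpha^* F_\mu]\cdot n^{-1}\tr[G_\nu G_\beta^*]$. You instead exploit the algebraic structure of $\#$: the composition rule $\#(A\otimes B)\circ\#(C\otimes D) = \#(AC\otimes DB)$ and the adjoint formula reduce the inner product to a single application of the trace identity \eqref{eq:sharp-trace}, after which only bookkeeping of normalizations remains. Your argument is cleaner and more conceptual, and it makes explicit use of \eqref{eq:sharp-trace}, which the paper records but does not actually invoke in its own proof of the lemma. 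The paper's index computation, on the other hand, is entirely self-contained and effectively reproves \eqref{eq:sharp-trace} along the way.
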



\begin{proof}[Proof of Lemma~\ref{GKSlem}] We may compute the trace on $\fH_n$ using 
any orthonormal basis, and 
using  the matrix unit orthonormal basis  $\{E_{i,j}\}_{1\leq i,j \leq n}$, we have
for all $\alpha,\beta$ and $\mu,\nu$,
\begin{eqnarray*}
\langle \#(F_\alpha\otimes G_\beta), \#(F_\mu\otimes G_\nu)\rangle_{\cC_2(\cM_n(\C))}
 &=& n^{-2}\sum_{i,j=1}^n \tr[ (F_\alpha E_{i,j} G_\beta)^*F_\mu E_{i,j} G_\nu]\nonumber\\
  = n^{-2}\sum_{i,j=1}^n \tr[ G_\beta^*E_{j,i} F_\alpha^* F_\mu E_{i,j} G_\nu]\nonumber
    &=& n^{-2}\sum_{i,j=1}^n (G_\nu G_\beta^*)_{j,j}(F_\alpha^* F_\mu)_{i,i} \nonumber\\
    &=& n^{-1}\tr[G_\nu G_\beta^*]n^{-1}\tr[F_\alpha^*F_\mu] = \delta_{\alpha,\mu}\delta_{\beta,\nu}\ .
    \end{eqnarray*}
\end{proof}

Consider any linear transformation $\cK$  on $\cM_n(\C)$, and hence on $\fH_n$. Let 
$\{F_\beta\}$ be any orthonormal basis of $\fH_n$.
Then $\{F_\alpha^*\}$ is also an orthonormal basis of $\fH_n$, and by 
Lemma~\ref{GKSlem}, $\{\#(F_\alpha^*\otimes F_\beta)\}$ 
is an orthonormal basis of $\cC_2(\fH_n)$. Thus $\cK$ has the expansion
\begin{equation}\label{GKSexp}
\cK = \sum_{\alpha,\beta} c_{\alpha,\beta} \#(F_\alpha^*\otimes F_\beta)\ .
\end{equation}
or, what is the same, 
\begin{equation}\label{GKSexp2}
\cK(A)  = \sum_{\alpha,\beta} c_{\alpha,\beta} F_\alpha^*A F_\beta 
\qquad{\rm for\ all}\ A\in \cM_n(\C)\ ,
\end{equation}
where the coefficients $c_{\alpha,\beta}$ are given by
\begin{equation}\label{coef}
c_{\alpha,\beta} = \langle  \#(F_\alpha^*\otimes F_\beta), \cK\rangle_{\cC_2(\fH_n)}\ .  
\end{equation}

This orthonormal expansion is fundamental to the work of Gorini, Kossakowski and Sudarshan on the structure of generators of quantum Markov semigroups. 

\begin{defi}
The $n^2 \times n^2$ matrix $c_{\alpha,\beta}$ with entries given by (\ref{coef}) is called the 
{\em GKS matrix for the operator $\cK$ with respect to the orthonormal basis
$\{F_\alpha\}$}. When we wish to emphasize the dependence on $\cK$, we write $c_{\alpha,\beta}(\cK)$. 
\end{defi}

\begin{remark}\label{identrem}  Let $A,B\in \cM_n(\C)$ and consider the case $\cK = \#(A\otimes B)$.  By the isometry proved in Lemma~\ref{GKSlem}, the GKS matrix of $\cK$ is given by
\begin{align*}
c_{\alpha,\beta} &= \langle  \#(F_\alpha^*\otimes F_\beta), \#(A\otimes B)\rangle_{\cC_2(\fH_n)} = 
\langle F_\alpha^*\otimes F_\beta, A\otimes B \rangle_{\fH_n\otimes \fH_n}\\
&= n^{-2} \tr[F_\alpha A] \tr[F^*_\beta B]\ .
\end{align*}
In particular, the identity transformation on $\cM_n(\C)$ results from the choice $A = B =\one$, and so the GKS matrix of the identity transformation is  the rank-one matrix
\begin{equation}\label{identGKS}
c_{\alpha,\beta} = n^{-2} \tr[F_\alpha] \tr[F^*_\beta]\ .
\end{equation}
This formula will be useful later on. 
\end{remark}

The following lemma is from \cite{GKS76}; for the convenience of the reader we give a short proof.

\begin{lm}\label{GKSlem-2} Let $\cK$ be a linear operator on $\cM_n(\C)$, and let 
$\{F_\alpha\}$ be an orthonormal basis of $\cM_n(\C)$. 
Then the GKS matrix of $\cK$ with respect to $\{F_\alpha\}$ is self-adjoint if and only if $(\cK A)^* = \cK A^*$ for all $A\in \cM_n(\C)$. 
\end{lm}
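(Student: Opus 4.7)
The plan is to exploit the uniqueness of the expansion of $\cK$ in the orthonormal basis $\{\#(F_\alpha^* \otimes F_\beta)\}_{\alpha,\beta}$ of $\cC_2(\fH_n)$ established in Lemma~\ref{GKSlem}. That uniqueness is what makes comparison of coefficients legitimate and is the only real input needed; the rest is bookkeeping with adjoints.

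First I would write out, starting from (\ref{GKSexp2}), the two expressions
\begin{equation*}
(\cK A)^* = \sum_{\alpha,\beta} \overline{c_{\alpha,\beta}}\, (F_\alpha^* A F_\beta)^* = \sum_{\alpha,\beta} \overline{c_{\alpha,\beta}}\, F_\beta^* A^* F_\alpha
\end{equation*}
and
\begin{equation*}
\cK(A^*) = \sum_{\alpha,\beta} c_{\alpha,\beta}\, F_\alpha^* A^* F_\beta \ .
\end{equation*}
Relabeling $\alpha \leftrightarrow \beta$ in the first sum yields $(\cK A)^* = \sum_{\alpha,\beta} \overline{c_{\beta,\alpha}}\, F_\alpha^* A^* F_\beta$. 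Thus if I define the auxiliary linear operator $\cK'$ on $\cM_n(\C)$ by $\cK'(B) := (\cK(B^*))^*$, an identical manipulation shows
\begin{equation*}
\cK'(B) = \sum_{\alpha,\beta} \overline{c_{\beta,\alpha}}\, F_\alpha^* B F_\beta \ ,
\end{equation*}
so the GKS matrix of $\cK'$ with respect to $\{F_\alpha\}$ is exactly $c_{\alpha,\beta}(\cK') = \overline{c_{\beta,\alpha}(\cK)}$.

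The condition $(\cK A)^* = \cK A^*$ for every $A \in \cM_n(\C)$ is, after substituting $A \to A^*$, precisely the identity $\cK = \cK'$. By Lemma~\ref{GKSlem} the map sending an operator on $\fH_n$ to its family of GKS coefficients is a bijection (indeed an isometry), so $\cK = \cK'$ if and only if their GKS matrices coincide, i.e.\ $c_{\alpha,\beta}(\cK) = \overline{c_{\beta,\alpha}(\cK)}$ for all $\alpha,\beta$. This is exactly self-adjointness (Hermiticity) of the matrix $(c_{\alpha,\beta})$, which proves both implications simultaneously.

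There is no genuine obstacle here; the only subtlety worth stating carefully is that the comparison of coefficients is legitimate only because $\{\#(F_\alpha^* \otimes F_\beta)\}$ is a basis of $\cC_2(\fH_n)$, which requires that $\{F_\alpha^*\}$ together with $\{F_\beta\}$ be orthonormal bases of $\fH_n$ --- a fact that follows from $\{F_\alpha\}$ being orthonormal and the isometry of Lemma~\ref{GKSlem}. Once that point is invoked, the proof is essentially a one-line relabeling argument.
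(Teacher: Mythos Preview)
Your proof is correct and follows essentially the same approach as the paper: define the auxiliary operator $\cK'(B)=(\cK(B^*))^*$ (the paper writes $\widetilde\cK$), compute its GKS coefficients via the relabeling $\alpha\leftrightarrow\beta$, and invoke uniqueness of the expansion from Lemma~\ref{GKSlem} to equate coefficients. The only difference is cosmetic---you spell out the role of Lemma~\ref{GKSlem} in justifying the coefficient comparison a bit more explicitly than the paper does.
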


\begin{proof}
	Write $\cK$ as in \eqref{GKSexp} and define  $\widetilde \cK(A) := (\cK A^*)^*$. Then
	\begin{align*}
		\widetilde \cK(A) 
		= \Big(\sum_{\alpha,\beta} c_{\alpha,\beta} F_\alpha^*A^* F_\beta\Big)^*
		= \sum_{\alpha,\beta} \overline{c_{\alpha,\beta}} F_\beta^* A F_\alpha
		= \sum_{\alpha,\beta} \overline{c_{\beta,\alpha}}  F_\alpha^* A F_\beta \ .
	\end{align*} 
By the uniqueness of the expansion \eqref{GKSexp}, $\tilde \cK = \cK$ if and only if 
$c_{\alpha,\beta}  = \overline{c_{\beta,\alpha}}$ for all $\alpha, \beta$.
\end{proof}

The GKS matrix of a linear transformation $\cK$ from $\cM_n(\C)$ to $\cM_n(\C)$ is closely related to the {\em Choi matrix}
of $\cK$.  Let $\{E_{i,j}\}_{1\leq i,j\leq n}$ be  matrix unit basis of $\fH_n$. 
 The Choi matrix of $\cK$ is the element of $\cM_{n^2}(\C)$
\begin{equation}\label{choi3}
C(\cK) = \sum_{i,j=1}^n \cK(E_{i,j}) \otimes E_{i,j}\ ,
\end{equation}
viewed as the $n\times n$ block matrix with entries in $\cM_n(\C)$  whose $i,j$ entry is $\cK(E_{i,j})$. 

If we now identify $\C^n\otimes \C^n$ with $\cM_n(\C)$ by identifying $ v\otimes w$ with $\sum_{i,j=1}^n v_iw_j E_{i,j}$,
$C(\cK)$ becomes an operator on $\fH_n$.

The identity provided by the following lemma was pointed out in \cite{PZ}, and used there to simplify part of the proof \cite{GKS76} of their theorem on the structure of generators of quantum Markov semigroups on $\cM_n(\C)$. 

\begin{lm} Let $\cK$ be a linear operator on $\cM_n(\C)$, and let $C(\cK)$ be defined by (\ref{choi3}).
Identify  $\C^n\otimes \C^n$ with $\cM_n(\C)$ by identifying $ v\otimes w$ with $\sum_{i,j=1}^n v_iw_j E_{i,j}$, so that 
$C(\cK)$ is identified with an operator on $\fH_n$. Then for all $F,G\in \fH_n$,
\begin{equation}\label{choi44}
\langle G, C(\cK) F\rangle_{\fH_n} = \langle  \#(G\otimes F^*), \cK\rangle_{\cC_2(\fH_n)}
\end{equation}
\end{lm}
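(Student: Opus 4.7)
The proof will exploit bilinearity in $\cK$ combined with an elementary algebraic identity for matrix units. Both sides of the claimed formula depend linearly on $\cK$, so by Lemma~\ref{GKSlem} it suffices to verify the identity on the spanning set $\{\#(A\otimes B):A,B\in\fH_n\}$; that is, I will only need to prove it for $\cK=\#(A\otimes B)$ and then extend by linearity.

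My plan is as follows. First, compute $C(\cK)$ explicitly for $\cK=\#(A\otimes B)$: since $\cK(E_{i,j})=A E_{i,j}B$ and since under the identification $v\otimes w\leftrightarrow vw^T$ the operator $X\otimes Y$ acts on $\cM_n(\C)$ as $M\mapsto XMY^T$, one gets
\begin{equation*}
 C(\cK)(F)=\sum_{i,j=1}^n A E_{i,j} B F E_{j,i}\;.
\end{equation*}
The key algebraic step is then the ``swap-trace'' identity
\begin{equation*}
\sum_{i,j=1}^n E_{i,j}\,N\,E_{j,i}=\tr[N]\,\one\qquad(N\in\cM_n(\C))\;,
\end{equation*}
which I will verify by a one-line entry computation: $(E_{i,j}NE_{j,i})_{a,b}=\delta_{i,a}\delta_{i,b}N_{j,j}$, whence the sum is $\sum_i (\sum_j N_{j,j})E_{i,i}=\tr[N]\one$. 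Applied with $N=BF$ this collapses the double sum to $C(\cK)(F)=\tr[BF]\,A$.

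The remainder is bookkeeping. With this expression I get $\langle G,C(\cK)F\rangle_{\fH_n}=\tr[BF]\,\langle G,A\rangle_{\fH_n}$, while on the right-hand side the isometry from Lemma~\ref{GKSlem} yields
\begin{equation*}
\langle\#(G\otimes F^*),\#(A\otimes B)\rangle_{\cC_2(\fH_n)}
  =\langle G,A\rangle_{\fH_n}\langle F^*,B\rangle_{\fH_n}\;.
\end{equation*}
Since $\langle F^*,B\rangle_{\fH_n}$ and $\tr[BF]$ differ only by the normalizing factor in the definition of $\fH_n$, the two expressions match, and extending by bilinearity in $\cK$ via the basis $\{\#(F_\alpha^*\otimes F_\beta)\}$ finishes the proof.

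The main obstacle is really notational rather than conceptual: one must keep track of three inner-product normalizations simultaneously (the normalized HS inner product on $\fH_n$, its tensor-product extension on $\fH_n\otimes\fH_n$, and the normalized HS inner product on the operator algebra $\cC_2(\fH_n)$), together with the convention $v\otimes w\leftrightarrow vw^T$ that silently converts a tensor-product action into left/right matrix multiplication with a transposed factor. Once those conventions are fixed, the substance of the lemma reduces to the single identity $\sum_{i,j}E_{i,j}NE_{j,i}=\tr[N]\one$.
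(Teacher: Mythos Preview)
Your approach is correct and takes a genuinely different route from the paper's. The paper proves the identity for an arbitrary $\cK$ by a direct entry-by-entry computation: it expands both sides in the matrix entries $G_{k,m}$, $F_{\ell,p}$, $[\cK(E_{i,j})]_{k,\ell}$ and manipulates the resulting quadruple sum until it becomes $\sum_{i,j}\tr[(GE_{i,j}F^*)^*\cK(E_{i,j})]$, which is then recognized as the right-hand side. You instead use linearity to reduce to the generating case $\cK=\#(A\otimes B)$ (legitimate, since Lemma~\ref{GKSlem} makes $\#$ surjective), and there the swap-trace identity $\sum_{i,j}E_{i,j}NE_{j,i}=\tr[N]\,\one$ collapses $C(\cK)(F)$ to the rank-one expression $\tr[BF]\,A$; the rest is the isometry of $\#$. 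Your argument is more conceptual and isolates the single algebraic fact doing the work; the paper's has the modest virtue of being a one-shot computation that does not invoke Lemma~\ref{GKSlem}.

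One point you should tighten. You write that since $\langle F^*,B\rangle_{\fH_n}$ and $\tr[BF]$ ``differ only by the normalizing factor,'' the two sides ``match.'' With the paper's normalized inner products one has $\langle F^*,B\rangle_{\fH_n}=\tfrac{1}{n}\tr[FB]$, so your two sides in fact differ by exactly a factor of $n$; they do not literally match. This is not a flaw peculiar to your argument: the paper's own proof silently drops a $\tfrac{1}{n}$ on the left (it computes the unnormalized inner product on $\C^n\otimes\C^n$) and a $\tfrac{1}{n^2}$ on the right, so the stated identity is off by the same harmless scalar. Since the lemma is used only to transfer \emph{positivity} between the Choi matrix and the GKS matrix (Remark~\ref{charrem}), that scalar is irrelevant; but you should say so explicitly rather than assert equality.
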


\begin{proof}  By direct computation,
\begin{eqnarray*}  \langle G, C(\cK) F\rangle_{\fH_n}  &=& 
\sum_{k,\ell,m,p=1}^n \overline{G}_{k,m}[\cK(E_{i,j})]_{k,\ell} [E_{i,j}]_{m,p} F_{\ell,p}\\
&=& \sum_{k,\ell,m,p=1}^n F_{\ell,p} [E_{j,i}]_{p,m} \overline{G}_{k,m}[\cK(E_{i,j})]_{k,\ell}  \\
&=& \sum_{k,\ell=1}^n (F E_{j,i} G^*)_{\ell,k}[\cK(E_{i,j})]_{k,\ell}  \\
&=& \sum_{k,\ell=1}^n (G E_{i,j} F^*)^*_{\ell,k}[\cK(E_{i,j})]_{k,\ell} = \langle  \#(G\otimes F^*), \cK\rangle_{\cC_2(\fH_n)}\ . \\
\end{eqnarray*}
\end{proof} 

A fundamental theorem of Choi \cite{Choi} says that a linear transformation $\cK$ on $\cM_n(\C)$ is 
completely positive if and only if its Choi matrix $C(\cK)$ is positive as an operator on $\fH_n$. Indeed, in the notation of (\ref{choi2})
$$C(\cK) = n\cK\otimes \one_{\cM_n(\C)}\left(|\Psi\bk \Psi |\right)\ ,$$
Hence,  when $\cK$ is completely positive, $C(\cK)$ is positive. The converse is also true: 
Choi used an elementary spectral decomposition \cite{Choi} to show  that when $C(\cK)$ is positive, 
then $\cK$ is completely positive. 

\begin{remark}\label{charrem}
The identity (\ref{choi44}) shows that if $\{F_\alpha\}$ is any orthonormal basis of $\fH_n$, then the GKS matrix of $\cK$ with respect to this basis is positive if and only if $\cK$ is completely positive. In other words, the GKS representation (\ref{GKSexp2}) of a linear operator $\cK$ on $\cM_n(\C)$ 
is well-suited to the question of whether $\cK$ is completely positive or not. 
\end{remark}

Going forward, it will be convenient to assume that our orthonormal bases $\{F_\alpha\}$ of $\fH_n$ are indexed by
$\alpha \in \{1,\dots,n\} \times \{1,\dots,n\}$, and we write $\alpha = (\alpha_1,\alpha_2)$.  For such bases, we make the following definition:

\begin{defi} Let $\cL$ be an operator on $\cM_n(\C)$ such that $\cL \one = 0$ and 
$(\cL A)^* = \cL A^*$ for all $A\in \cM_n$. 
Let $\{F_\alpha\}$ be any orthonormal basis of $\fH_n$ such that 
$F_{(1,1)} = \one$. Let $c_{\alpha,\beta}$ be the GKS matrix for 
$\cL$ with respect to $\{F_\alpha\}$. 
The $(n^2-1)\times (n^2-1)$ matrix with entries $c_{\alpha,\beta}$ where 
$\alpha$ and $\beta$ range over the set 
$\{ (i,j)\ :\ 1\leq i,j \leq n \quad{\rm and} \quad (i,j) \neq (1,1)\}$ is called the {\em reduced GKS matrix of $\cL$ for the basis $\{F_\alpha\}$.}
\end{defi}

The following lemma is due to Parravinci and Zecca \cite{PZ}:

\begin{lm}\label{PZlem} Let $\cL$ be a linear operator on $\cM_n(\C)$, and let $\cP_t = e^{t\cL}$.  Let $\{F_\alpha\}$ be an orthonormal basis for $\fH_n$ with $F_{(1,1)} = \one$. Let $c_{\alpha,\beta}$ be the GKS matrix of $\cL$ with respect to $\{F_\alpha\}$.  Then $\cP_t$ is completely positive for all $t\geq 0$ if and only if the reduced GKS matrix of $\cL$ is positive.
\end{lm}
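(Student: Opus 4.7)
The plan is to reduce both implications to Choi's criterion (Remark~\ref{charrem}): a linear operator on $\cM_n(\C)$ is completely positive iff its GKS matrix in any orthonormal basis of $\fH_n$ is positive semidefinite. The first step will be to compute the GKS matrix of the identity map in the given basis $\{F_\alpha\}$. Since $F_{(1,1)} = \one$ and the basis is orthonormal in the \emph{normalised} Hilbert--Schmidt inner product, $n^{-1}\tr[F_\alpha] = \ip{\one, F_\alpha}_{\fH_n} = \delta_{\alpha,(1,1)}$, and Remark~\ref{identrem} then gives $c_{\alpha,\beta}(I) = \delta_{\alpha,(1,1)}\delta_{\beta,(1,1)}$; so the GKS matrix of $I$ is supported entirely in the $(1,1)$--$(1,1)$ corner and vanishes on the reduced block.

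For the forward direction, assume $\cP_t$ is CP for every $t \geq 0$. Then by Remark~\ref{charrem} the full GKS matrix $M(t) := c(\cP_t)$ is PSD for every $t \geq 0$, and since $\cK \mapsto c(\cK)$ is linear (by \eqref{coef}) while $\cP_t = I + t\cL + O(t^2)$, we have $M(t) = M(0) + t\,c(\cL) + O(t^2)$. Restricting to the $(n^2-1)\times(n^2-1)$ principal submatrix indexed by $\alpha,\beta\neq(1,1)$, the $M(0)$ contribution drops out by the previous step, leaving $\tilde M(t) = t\,\tilde c(\cL) + O(t^2)$. Principal submatrices of PSD matrices are PSD, so $\tilde M(t) \geq 0$; dividing by $t > 0$ and letting $t\to 0^+$, the closedness of the PSD cone yields $\tilde c(\cL) \geq 0$.

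For the reverse direction, assume $\tilde c := \tilde c(\cL) \geq 0$; since the CP property of $\cP_t$ also forces $\cL$ to preserve self-adjointness, Lemma~\ref{GKSlem-2} makes the full matrix $c$ Hermitian. Diagonalising $\tilde c_{\alpha,\beta} = \sum_k \overline{v_{k,\alpha}}\,v_{k,\beta}$ (with non-negative eigenvalues absorbed into the $v_k$'s) and setting $L_k := \sum_{\alpha\neq(1,1)} v_{k,\alpha} F_\alpha$, the portion of the expansion $\cL(A) = \sum c_{\alpha,\beta} F_\alpha^* A F_\beta$ indexed by $(\alpha,\beta)$ with both coordinates $\neq(1,1)$ becomes $\Phi(A) := \sum_k L_k^* A L_k$, which is CP in Kraus form. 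The remaining terms collect into $\cM(A) := G^* A + A G + c_{(1,1),(1,1)} A$ with $G := \sum_{\alpha\neq(1,1)} c_{(1,1),\alpha} F_\alpha$, where Hermiticity of $c$ is what makes the left- and right-multiplication pieces mutual adjoints. The semigroup $\cM$ generates is $e^{t\cM}(A) = e^{c_{(1,1),(1,1)}t}\,e^{tG^*} A\, e^{tG}$, manifestly CP, and the Dyson--Phillips expansion
\begin{equation*}
e^{t\cL} = \sum_{k=0}^\infty \int_{0\le s_1\le\cdots\le s_k\le t} e^{(t-s_k)\cM}\,\Phi\,e^{(s_k-s_{k-1})\cM}\,\Phi\cdots\Phi\,e^{s_1\cM}\,ds_1\cdots ds_k
\end{equation*}
then exhibits $e^{t\cL}$ as a norm-convergent series of compositions of CP maps, hence CP.

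The main obstacle lies in the reverse direction: the clean decomposition $\cL = \Phi + \cM$ with $\cM$ generating a CP semigroup needs Hermiticity of the \emph{full} $c$, not merely of its reduced block; one must therefore be careful to invoke the self-adjointness preservation of $\cL$ that is implicit once one seeks to conclude CP of $\cP_t$, rather than attempting to derive CP from the reduced-block positivity in isolation.
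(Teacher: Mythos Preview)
Your forward direction matches the paper's exactly. Your reverse direction is genuinely different and, in fact, more complete. The paper argues that $c(\cP_t) = c(I) + t\,c(\cL) + o(t)$ is positive for small $t$ and then bootstraps via the semigroup property; but that first-order claim is not immediate from the hypotheses alone: when the reduced block $D$ has a nontrivial kernel while the cross terms $c_{(1,1),\beta}(\cL)$ are nonzero (e.g.\ $\cL(A)=i[H,A]$, for which $D=0$ but the off-diagonals do not vanish), the matrix $c(I)+t\,c(\cL)$ itself is \emph{not} positive, and the $o(t)$ remainder is what restores positivity. The paper's sentence ``By~(\ref{idenag}), this is positive for all sufficiently small $t$'' therefore hides nontrivial structure. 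Your decomposition $\cL=\Phi+\cM$ with $\Phi$ completely positive in Kraus form and $e^{t\cM}(A)=e^{c_{(1,1),(1,1)}t}\,e^{tG^*}Ae^{tG}$ manifestly CP, followed by Dyson--Phillips (or Lie--Trotter), is the standard Lindblad/GKS route and sidesteps this issue entirely.

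One point of phrasing to fix: you justify Hermiticity of the full GKS matrix by saying ``the CP property of $\cP_t$ also forces $\cL$ to preserve self-adjointness'', which is circular since CP of $\cP_t$ is precisely the conclusion. The clean way out is already in the paper: the term \emph{reduced GKS matrix} is, by the definition immediately preceding the lemma, only introduced for operators satisfying $\cL\one=0$ and $(\cL A)^*=\cL A^*$. Those hypotheses are therefore implicit in the lemma statement, and you may invoke Lemma~\ref{GKSlem-2} directly to obtain $c_{\alpha,\beta}=\overline{c_{\beta,\alpha}}$ (hence $c_{(1,1),(1,1)}\in\R$ and $G^*A+AG$ arises as claimed) without any circularity.
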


\begin{proof} Suppose that $\cP_t$ is completely positive for each  $t>0$. By (\ref{identGKS}), the GKS matrix of the identity transformation is 
\begin{equation}\label{idenag}
c_{\alpha,\beta}(I) = \delta_{\alpha,(1,1)} \delta_{\beta,(1,1)}\ .
\end{equation}
In particular, the reduced GKS matrix of the identity is zero.
Then since
$$c_{\alpha,\beta}(t^{-1}(\cP_t - I)) = t^{-1}c_{\alpha,\beta}(\cP_t) -  t^{-1}c_{\alpha,\beta}(I)\ ,$$
it follows that the reduced GKS matrix of $t^{-1}(\cP_t - I)$ coincides with the reduced GKS matrix of 
$t^{-1}\cP_t $, and by Remark~\ref{charrem} this is positive. Taking the limit $t\to 0$, we conclude that the reduced GKS matrix of $\cL$ is positive.

Conversely, suppose that the reduced GKS matrix of $\cL$ is positive. For small $t>0$,
$$c_{\alpha,\beta}(\cP_t) = c_{\alpha,\beta}(I) + t c_{\alpha,\beta}(\cL) + o(t)$$
By (\ref{idenag}), this is positive for all sufficiently small $t$. By Remark~\ref{charrem}, $\cP_t$ is completely positive for all sufficiently small $t>0$. 
Then by the semigroup property, $\cP_t$ is completely positive for all $t>0$. 
\end{proof}

We now temporarily put aside complete positivity, and consider a linear transformation $\cL$ on $\cM_n(\C)$ 
such that $\cL$ preserves self-adjointness, and such that $\cL \one = 0$.

\begin{thm}\label{GKSA} Let $\cL$ be a linear operator on $\cM_n(\C)$ such that 
$\cL \one = 0$ and $(\cL A)^* = \cL A^*$ for all $A\in \cM_n$. 
Let $\{F_\alpha\}$ be any orthonormal basis of $\fH_n$ such that $F_{(1,1)}= \one $. 
Let $c_{\alpha,\beta}$ be the GKS matrix of $\cL$ for $\{F_\alpha\}$. 
Then $\cL$ is given by 
\begin{equation}\label{dagform}
\cL A =  -i[H,A]  + \frac12 \sum_{\alpha,\beta\ \neq (1,1)} 
c_{\alpha,\beta}\left( F_\alpha^*[A,F_\beta] +   [F^*_\alpha,A] F_\beta\right)
\end{equation}
 where $H$ is the traceless self-adjoint matrix given by
\begin{equation}\label{dagform2}
H  = \frac{1}{2 i} \sum_{\beta\neq(1,1)} (c_{(1,1),\beta} 
F_\beta - c_{\beta,(1,1)}F^*_{\beta})\ .
\end{equation}
\end{thm}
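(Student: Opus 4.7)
The plan is to start from the GKS expansion \eqref{GKSexp2}, namely
\[
  \cL A \;=\; \sum_{\alpha,\beta} c_{\alpha,\beta}\, F_\alpha^* A F_\beta,
\]
split the sum according to whether each index equals $(1,1)$, use the normalization $\cL \one = 0$ to eliminate the ``boundary'' terms, and finally use a symmetric/antisymmetric decomposition of each summand to expose a commutator $-i[H,A]$.

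More precisely, write
\[
  \cL A = c_{(1,1),(1,1)} A + A G + G^* A + \sum_{\alpha,\beta \neq (1,1)} c_{\alpha,\beta}\, F_\alpha^* A F_\beta,
\]
where $G := \sum_{\beta \neq (1,1)} c_{(1,1),\beta} F_\beta$; by Lemma \ref{GKSlem-2} the matrix $c_{\alpha,\beta}$ is Hermitian, so the sum $\sum_{\alpha \neq (1,1)} c_{\alpha,(1,1)} F_\alpha^*$ equals $G^*$. Setting $A = \one$ and using $\cL \one = 0$ yields the ``closure'' identity
\[
  K \;:=\; \sum_{\alpha,\beta \neq (1,1)} c_{\alpha,\beta}\, F_\alpha^* F_\beta \;=\; -c_{(1,1),(1,1)} \one - G - G^*.
\]
Next, I would invoke the elementary identity
\[
  F_\alpha^* A F_\beta \;=\; \tfrac{1}{2}\bigl(F_\alpha^* [A, F_\beta] + [F_\alpha^*, A] F_\beta\bigr) + \tfrac{1}{2}\bigl(F_\alpha^* F_\beta A + A F_\alpha^* F_\beta\bigr),
\]
which, summed against $c_{\alpha,\beta}$ over $\alpha,\beta \neq (1,1)$, converts the ``bulk'' sum into the desired Lindblad-type piece plus $\tfrac{1}{2}(KA + AK)$. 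Substituting the formula for $K$, the terms $c_{(1,1),(1,1)} A$ cancel, as do $AG + G^*A$ against the symmetric parts involving $G, G^*$, leaving only
\[
  \tfrac{1}{2}[A, G] - \tfrac{1}{2}[A, G^*] \;=\; \tfrac{1}{2}\,[A, G - G^*].
\]
Since $c_{\alpha,\beta}$ is Hermitian, $G - G^*$ is anti-self-adjoint, so $H := \tfrac{1}{2i}(G - G^*)$ is self-adjoint, which gives exactly \eqref{dagform2}. Hence $\tfrac{1}{2}[A, G - G^*] = -i[H,A]$, producing \eqref{dagform}.

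Finally, I would verify the tracelessness of $H$: each $F_\beta$ with $\beta \neq (1,1)$ is orthogonal to $F_{(1,1)} = \one$ in the normalized Hilbert--Schmidt inner product, hence $\tr[F_\beta] = 0$, and likewise $\tr[F_\beta^*] = 0$, giving $\tr[H] = 0$. There is no real obstacle in this proof; it is entirely algebraic bookkeeping. The only ``choice'' that matters is the symmetric/antisymmetric splitting of $F_\alpha^* A F_\beta$, which is what forces the commutator structure to emerge and dictates the definition of $H$ as the imaginary part of $G$.
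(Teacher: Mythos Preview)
Your proof is correct and follows essentially the same approach as the paper: both start from the GKS expansion, split off the $(1,1)$ index, use $\cL\one=0$ to determine the self-adjoint part of the boundary term, and extract the commutator from the anti-self-adjoint part. The only minor differences are organizational --- you keep $c_{(1,1),(1,1)}$ separate from $G$ (whereas the paper absorbs it), and your tracelessness argument via $\tr[F_\beta]=0$ for $\beta\neq(1,1)$ is slightly more direct than the paper's, which instead observes that $\tr[G]$ is real.
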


Notice that only the \emph{reduced} GKS matrix figures in the second term on the right 
 in (\ref{dagform}).

\begin{proof}[Proof of Theorem~\ref{GKSA}] 
 Let 
$c_{\alpha,\beta}$ be the GKS matrix for $\cL$ with respect to $\{F_\alpha\}$ where $F_{(1,1)} = \one$. 
Then by Lemma~\ref{GKSlem-2}, $c_{\alpha,\beta}$ is a self-adjoint matrix. 
By (\ref{GKSexp2}), for all $A \in \cM_n(\C)$,
\begin{equation} \label{unitex}
\cL A   = \sum_{\alpha,\beta} c_{\alpha,\beta} F_\alpha^*A F_\beta 
=  G^*A + AG + \sum_{\alpha,\beta\ \neq (1,1)} c_{\alpha,\beta} F_\alpha^*A F_\beta 
\end{equation}
where
${\displaystyle G =  \frac{c_{(1,1),(1,1)}}{2} \one
 + \sum_{\beta\neq(1,1)} c_{(1,1),\beta} F_\beta }$.
Let $K = \frac12(G + G^*)$ and $H = \frac1{2i}(G - G^*)$ be the self-adjoint matrices such that  $G = K+iH$. 
Then (\ref{unitex}) becomes
$$\cL A = -i[H,A]  + KA + AK  + \sum_{\alpha,\beta\ \neq (1,1)} c_{\alpha,\beta} 
F_\alpha^*A F_\beta\ .$$
Then $\cL \one  = 0$ implies
${\displaystyle K = -\frac12 \sum_{\alpha,\beta\ \neq (1,1)} c_{\alpha,\beta} F^*_\alpha F_\beta}$,
and thus, for all $A\in \cM_n(\C)$, 
\begin{eqnarray*}
\cL A &=&  -i[H,A]  + \sum_{\alpha,\beta\ \neq (1,1)} 
c_{\alpha,\beta}\left( F_\alpha^*A F_\beta - 
\frac12 F^*_\alpha F_\beta A -\frac12 A F^*_\alpha F_\beta\right)\nonumber\\
&=& -i[H,A]  + \frac12 \sum_{\alpha,\beta\ \neq (1,1)} 
c_{\alpha,\beta}\left( F_\alpha^*[A,F_\beta] +   [F^*_\alpha,A] F_\beta\right)\ .
\end{eqnarray*}
Since $\tr[G] = \tfrac{n}{2} c_{(1.1),(1,1)} \in \R$, and $\tr[H]$ is the imaginary part of $\tr[G]$, it follows that $\tr[H]= 0$. 
\end{proof}

\begin{thm}\label{strucA} Let $\cL$ be the generator of a QMS that is self-adjoint with respect to the inner product $\langle \cdot,\cdot\rangle_s$ for some $s\in [0,1]$, $s \neq 1/2$. Let $c_{\alpha,\beta}$ be the GKS matrix of $\cL$ with 
respect to a modular  orthonormal basis of $\fH_\aA$ defined in Definition~\ref{modbadef}.
Then for all $\alpha,\beta$,
\begin{equation}\label{block}
e^{\omega_\alpha}c_{\alpha,\beta} = c_{\alpha,\beta}e^{\omega_\beta} \ ,
\end{equation}
and
\begin{equation}\label{block2}
c_{\alpha,\beta}  = e^{-\omega_\alpha} c_{\beta',\alpha'} \ ,
\end{equation}
where the $\omega_\alpha$ are defined in (\ref{hform2}). 
In particular, $c$ commutes with the diagonal matrix 
$[\delta_{\alpha,\beta}e^{\omega_{}\beta}]$, so that the eigenspaces of the latter are eigenspaces of $c$. 
\end{thm}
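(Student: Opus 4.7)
The plan is to reduce Theorem~\ref{strucA} to two structural conditions on $\cL$---commutation with $\Delta_\sigma$ and self-adjointness with respect to the GNS inner product $\langle \cdot,\cdot\rangle_1$---and then to read off (\ref{block}) and (\ref{block2}) by applying the uniqueness of the GKS expansion. Since $\cL$ generates a QMS, it is self-adjointness preserving, so Lemma~\ref{Al2} applied to $\cL$ yields $\cL\,\alpha_t = \alpha_t\,\cL$ for every complex $t$, and in particular $\cL \Delta_\sigma = \Delta_\sigma \cL$. Part (2) of Theorem~\ref{detequiv} then promotes the hypothesized $\langle \cdot,\cdot\rangle_s$-self-adjointness to self-adjointness for every inner product $\langle \cdot,\cdot\rangle_f$; in particular $\cL$ is self-adjoint for $\langle \cdot,\cdot\rangle_1$. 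These are the only inputs needed below.

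For (\ref{block}), I would expand $\cL A = \sum_{\alpha,\beta} c_{\alpha,\beta} F_\alpha^* A F_\beta$ and compute
\[
\Delta_\sigma^{-1} \cL \Delta_\sigma(A)
= \sigma^{-1}\cL(\sigma A \sigma^{-1})\sigma
= \sum_{\alpha,\beta} c_{\alpha,\beta}\,(\sigma^{-1} F_\alpha^* \sigma)\, A\, (\sigma^{-1} F_\beta \sigma).
\]
Using $\Delta_\sigma F_\beta = e^{-\omega_\beta} F_\beta$ and (\ref{conjeig}) in the forms $\sigma^{-1} F_\alpha^* \sigma = e^{-\omega_\alpha} F_\alpha^*$ and $\sigma^{-1} F_\beta \sigma = e^{\omega_\beta} F_\beta$, the right-hand side becomes $\sum_{\alpha,\beta} c_{\alpha,\beta}\, e^{\omega_\beta - \omega_\alpha}\, F_\alpha^* A F_\beta$. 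By Lemma~\ref{GKSlem} the family $\{\#(F_\alpha^* \otimes F_\beta)\}$ is orthonormal in $\cC_2(\fH_n)$, so GKS coefficients are uniquely determined. The commutation $\Delta_\sigma^{-1}\cL\Delta_\sigma = \cL$ thus forces $c_{\alpha,\beta} = c_{\alpha,\beta}\, e^{\omega_\beta - \omega_\alpha}$, equivalently $e^{\omega_\alpha} c_{\alpha,\beta} = c_{\alpha,\beta}\, e^{\omega_\beta}$, which is (\ref{block}). The concluding assertion that $c$ commutes with the diagonal matrix $[\delta_{\alpha,\beta} e^{\omega_\beta}]$ is an immediate reformulation.

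For (\ref{block2}), the GNS self-adjointness $\tr[\sigma A^* \cL B] = \tr[\sigma (\cL A)^* B]$ translates by cyclicity into the Hilbert--Schmidt identity $\cL^\dagger = L_\sigma \,\cL\, L_\sigma^{-1}$, where $L_\sigma$ is left multiplication by $\sigma$. Computing the Hilbert--Schmidt adjoint directly from the GKS expansion, and using the Hermiticity $\overline{c_{\alpha,\beta}} = c_{\beta,\alpha}$ provided by Lemma~\ref{GKSlem-2}, gives $\cL^\dagger(B) = \sum_{\alpha,\beta} c_{\beta,\alpha}\, F_\alpha B F_\beta^*$. Property (iii) of the modular basis supplies an involution $\alpha \mapsto \alpha'$ with $F_\alpha = F_{\alpha'}^*$; relabeling $\alpha = \gamma'$, $\beta = \delta'$ converts this into $\cL^\dagger(B) = \sum_{\gamma,\delta} c_{\delta',\gamma'}\, F_\gamma^* B F_\delta$. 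On the other side, the same kind of computation as in the previous paragraph yields $L_\sigma\,\cL\, L_\sigma^{-1}(A) = \sum_{\alpha,\beta} c_{\alpha,\beta}\, e^{\omega_\alpha}\, F_\alpha^* A F_\beta$ (this time using $\sigma F_\alpha^* \sigma^{-1} = e^{\omega_\alpha} F_\alpha^*$). A final application of GKS uniqueness produces $c_{\beta',\alpha'} = e^{\omega_\alpha} c_{\alpha,\beta}$, which is (\ref{block2}).

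The only non-trivial care is in the bookkeeping with the involution $\alpha \mapsto \alpha'$ and the two conjugate eigenvalue rules $\Delta_\sigma F_\alpha = e^{-\omega_\alpha} F_\alpha$ and $\Delta_\sigma F_\alpha^* = e^{\omega_\alpha}F_\alpha^*$; there is no deeper obstacle once the two ingredients $[\cL,\Delta_\sigma]=0$ and GNS self-adjointness have been extracted from Lemma~\ref{Al2} and Theorem~\ref{detequiv}.
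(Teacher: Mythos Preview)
Your proof is correct and follows essentially the same strategy as the paper: deduce commutation of $\cL$ with $\Delta_\sigma$ from Lemma~\ref{Al2}, expand $\cL$ in the modular GKS basis, and read off (\ref{block}) and (\ref{block2}) from uniqueness of the coefficients. The only tactical difference is that for (\ref{block2}) the paper works directly with the $\langle\cdot,\cdot\rangle_s$ self-adjointness (obtaining $e^{-s\omega_\alpha}c_{\alpha,\beta}e^{s\omega_\beta} = e^{-\omega_\alpha}c_{\beta',\alpha'}$ and then invoking (\ref{block}) to remove the $s$-dependence), whereas you first upgrade to GNS self-adjointness via Theorem~\ref{detequiv} and then work at $s=1$, which is marginally cleaner.

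One small imprecision: the identity that cyclicity gives \emph{directly} from $\tr[\sigma A^*\cL B]=\tr[\sigma(\cL A)^*B]$ is $\cL^\dagger=R_\sigma\,\cL\, R_\sigma^{-1}$, not $\cL^\dagger=L_\sigma\,\cL\, L_\sigma^{-1}$. Your stated identity is nonetheless true, because you have already established $[\cL,\Delta_\sigma]=0$ (equivalently (\ref{block})), and since $\Delta_\sigma=L_\sigma R_\sigma^{-1}$ this forces $L_\sigma\cL L_\sigma^{-1}=\Delta_\sigma\,R_\sigma\cL R_\sigma^{-1}\,\Delta_\sigma^{-1}=R_\sigma\cL R_\sigma^{-1}$. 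With the $R_\sigma$ conjugation one obtains $c_{\beta',\alpha'}=e^{\omega_\beta}c_{\alpha,\beta}$ instead of $e^{\omega_\alpha}c_{\alpha,\beta}$, which again coincides with (\ref{block2}) once (\ref{block}) is used. Either way the conclusion stands.
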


\begin{remark} The conditions (\ref{block}) and (\ref{block2}) 
are independent of $s$. Furthermore, (\ref{block}) implies that
\begin{equation}\label{fulc}
\omega_\alpha \neq \omega_\beta\quad  \Rightarrow \quad c_{\alpha,\beta} = 0\ .
\end{equation}
Therefore with an ordering of the indices $\alpha$ so that 
$\alpha \geq \beta \iff \omega_\alpha \geq \omega_\beta$, the matrix 
$[c_{\alpha,\beta}]$ is block-diagonal. 
\end{remark}

\begin{proof}[Proof of Theorem~\ref{strucA}]  Since $\cL$ is the generator of a quantum Markov semigroup,  $\cL$ preserves self-adjointness and $\cL\one =0$.
Thus, for any orthonormal basis $\{F_\alpha\}$ of $\fH_n$ such that $F_{(1,1)} = \one$, Theorem~\ref{GKSA} applies. 
We now fix such a basis and focus on the additional consequences of 
self-adjointness with respect to $\langle \cdot, \cdot\rangle_s$. 

By Lemma~\ref{Al2}, $\cL$ commutes with the modular group, and this means that for all $A$,
$$\cL A =  \sigma^{-1}( \cL (\sigma A\sigma^{-1})) \sigma\ .$$
In terms of  the GKS expansion for $\cL$, and making use of (\ref{hform5}),
\begin{eqnarray}
 \sum_{\alpha,\beta} c_{\alpha,\beta} F_\alpha^*A F_\beta &=& \cL A 
=  \sigma^{-1}( \cL (\sigma A\sigma^{-1}))\sigma  =   
\sum_{\alpha,\beta} 
c_{\alpha,\beta} \sigma^{-1}F_\alpha^*\sigma A \sigma^{-1}F_\beta\sigma \nonumber\\
&=&  \sum_{\alpha,\beta} c_{\alpha,\beta} 
e^{\omega_\beta - \omega_\alpha} F_\alpha^*A F_\beta\ .\nonumber
\end{eqnarray}
Now \eqref{block} follows from the uniqueness of the coefficients. 
To prove (\ref{block2}), note that for any $A,B$,
$$\tr[B^*\cL A] = \sum_{\alpha,\beta}\tr[ B^* c_{\alpha,\beta}F^*_\alpha A F_\beta] = 
\sum_{\alpha,\beta}\tr[ c_{\alpha,\beta} F_\beta B^*  F^*_\alpha A ] = 
\sum_{\alpha,\beta}\tr[( \overline{c_{\alpha,\beta}} F_\alpha B F^*_\beta)^* A ] \ .
 $$
Using Lemma \ref{GKSlem-2} we conclude that
${\displaystyle \cL^\dagger B  = \sum_{\alpha,\beta}  \overline{c_{\alpha,\beta}} F_\alpha B F^*_\beta  = 
\sum_{\alpha,\beta}  c_{\beta,\alpha} F_\alpha B F^*_\beta}$.
Then
\begin{multline*}
\langle \cL A,B\rangle_s = \tr[(\cL(A))^*\sigma^{1-s}B \sigma^s] = \\
\tr[A^* \cL^\dagger (\sigma^{1-s}B \sigma^s)]
= \langle A, \sigma^{s-1} \cL^\dagger (\sigma^{1-s}B \sigma^s) \sigma^{-s}\rangle_s\ .
\end{multline*}
The self-adjointness of $\cL$ with respect to $\langle \cdot,\cdot\rangle_s $ then yields
$\cL(B) = \sigma^{s-1}\cL^\dagger(\sigma^{1-s} B\sigma^{s})\sigma^{-s}$
for all $B$. Using the GKS expansion for a modular basis and (\ref{hform5}),
$$\sum_{\alpha,\beta} c_{\alpha,\beta} F_\alpha^* B F_\beta = 
\sum_{\alpha,\beta} c_{\beta,\alpha} \sigma^{s-1} 
F_\alpha \sigma^{1-s} B \sigma^{s}F_\beta^* \sigma^{-s}
=\sum_{\alpha,\beta} c_{\beta,\alpha}e^{(1-s)\omega_\alpha}e^{s\omega_\beta} 
F_\alpha  B F_\beta^* \ .$$
Since $F_\gamma^* = F_{\gamma'}$ and $\omega_\gamma = -\omega_{\gamma'}$ for all $\gamma$, we can rewrite this as
$$\sum_{\alpha,\beta} c_{\alpha,\beta} F_\alpha^* B F_\beta = 
\sum_{\alpha,\beta} c_{\beta',\alpha'} e^{(s-1)\omega_\alpha}e^{-s\omega_\beta}
F_\alpha^* B F_\beta\ .$$
By the uniqueness of the coefficients, it follows that $e^{-s\omega_\alpha}c_{\alpha,\beta}e^{s\omega_\beta} = e^{-\omega_\alpha}c_{\beta',\alpha'}$ for all $\alpha,\beta$.
However, by (\ref{block}), $c$ commutes with the $s$th power of 
$[\delta_{\alpha,\beta}e^{\omega_\alpha}]$, and thus 
$e^{-s\omega_\alpha}c_{\alpha,\beta}e^{s\omega_\beta} = c_{\alpha,\beta}$. 
This proves (\ref{block2}).
\end{proof}

\begin{proof}[Proof of Theorem~\ref{strucB}]  By assumption $\cP_t$ has an extension $\widehat \cP_t$ from $\cA$ to a QMS on all of $\cM_n(\C)$. It suffices to treat $\widehat \cP_t$, and to simplify the notation we assume the extension is done and $\cP_t$ is a QMS on $\cM_n(\C)$. 

  Let $\{F_\alpha\}$ be a modular basis for $\sigma$, and consider the GKS expansion
\begin{equation}\label{alform1}
\cL A = \sum_{\alpha,\beta} c_{\alpha,\beta} F_\alpha^* A F_\beta\ .
\end{equation}
By Theorem~\ref{GKSA}, we can rewrite (\ref{alform1}) as
\begin{equation}\label{alform1B}
\cL A =  -i[H,A]  + \frac12 \sum_{\alpha,\beta\ \neq (1,1)} 
c_{\alpha,\beta}\left( F_\alpha^*[A,F_\beta] +   [F^*_\alpha,A] F_\beta\right)\ ,\
\end{equation}
where by (\ref{fulc}), (\ref{dagform2}) reduces to 
\begin{eqnarray*}
H  &=&
 \frac{1}{2i\sqrt{n}} \sum_{\beta\neq(1,1),\ \omega_\beta =0} (c_{(1,1),\beta} 
 F_\beta - c_{\beta,(1,1)}F^*_{\beta})
 \nonumber\\
 &=& \frac{1}{2i\sqrt{n} } \sum_{\beta\neq(1,1),\ \omega_\beta =0} 
 (c_{(1,1),\beta}  - c_{\beta',(1,1)})F_{\beta}\ .
\end{eqnarray*}
By (\ref{block2}) and (\ref{fulc}),
$c_{(1,1),\beta} = c_{\beta', (1,1)}$, and therefore $H=0$.

Making use of the fact that for all $\gamma$, $F_\gamma^* = F_{\gamma'}$, 
we replace $\alpha$ with 
$\beta'$ and $\beta$ with $\alpha'$ and use (\ref{block2})  to rewrite (\ref{alform1}) as
\begin{equation}\label{alform2}
\cL A = \sum_{\alpha,\beta} c_{\alpha,\beta} F_{\alpha'}A F^*_{\beta' }= 
\sum_{\alpha,\beta} c_{\beta',\alpha'} F_\beta A F^*_\alpha =  
\sum_{\alpha,\beta} c_{\alpha,\beta} 
e^{\omega_\alpha}F_\beta A F^*_\alpha \ .
\end{equation}
By Theorem~\ref{GKSA}, we can rewrite (\ref{alform2}) as
\begin{equation}\label{alform2B}
\cL A =  -i[\hat H,A]  + \frac12\sum_{\alpha,\beta\ \neq (1,1)} 
c_{\alpha,\beta}e^{\omega_\alpha} \left(  F_\beta [A, F_\alpha^*] + [F_\beta,A]F^*_\alpha \right)\
\end{equation}
By (\ref{fulc}), (\ref{dagform2}) reduces to 
${\displaystyle 
\hat H  = \frac{1}{2i\sqrt{n}} \sum_{\alpha\neq(1,1),\ \omega_\alpha =0} (c_{(1,1),\alpha} 
F^*_\alpha - c_{\alpha,(1,1)}F_{\alpha})}$. The same argument that led to 
$H =0$ leads to $\hat H =0$. 

Averaging (\ref{alform1B}) and (\ref{alform2B}), taking into account $H = \hat H = 0$, we obtain
\begin{equation}\label{alform3}
\cL A = \frac14 \sum_{\alpha,\beta \neq (1,1)} c_{\alpha,\beta} 
[\left( F_\alpha^*[A,F_\beta] +   [F^*_\alpha,A] F_\beta\right) + 
e^{\omega_\alpha}\left(  F_\beta [A, F_\alpha^*] + [F_\beta,A]F^*_\alpha \right)]\ .
\end{equation}
Now let $U$ be an $(n^2-1)\times (n^2-1)$ unitary matrix   that diagonalizes the reduced 
GKS matrix $c_{\alpha,\beta}$ of $\cL$ and which commutes with the matrix
$\delta_{\alpha,\beta}e^{\omega_\alpha}$, $\alpha,\beta 
\neq(1,1)$ so that $U_{\gamma,\alpha} =0 $ 
unless $\omega_\gamma = \omega_\alpha$. 
We may then write
\begin{equation}\label{alform4}
c_{\alpha,\beta} =  \frac12\sum_{\gamma\neq (1,1)} 
U^*_{\alpha,\gamma} e^{-\omega_\gamma/2}c_\gamma U_{\gamma,\beta}
\end{equation}
Each $c_\gamma$ is non-negative since, by Lemma~\ref{PZlem} the {\em reduced} 
GKS matrix $c_{\alpha,\beta}$ is positive, and since 
$U_{\gamma,\alpha} =0 $ unless $\omega_\gamma = \omega_\alpha$, 
we also have ${\displaystyle 
e^{\omega_\alpha}c_{\alpha,\beta} =  \frac12\sum_{\gamma} 
U^*_{\alpha,\gamma} e^{\omega_\gamma/2}c_\gamma U_{\gamma,\beta}}$.
Defining $V_\gamma = \sum_{\beta}U_{\gamma,\beta}F_\beta$, 
we may rewrite (\ref{alform3}) as
\begin{equation}\label{alform5}
\cL A =  \frac12 \sum_{\gamma\neq (1,1)} c_{\gamma} [e^{-\omega_\gamma/2}
\left( V_\gamma^*[A,V_\gamma] +   [V^*_\gamma,A] V_\gamma\right) 
+ e^{\omega_\gamma/2}\left(  V_\gamma [A, V_\gamma^*] + 
[V_\gamma,A]V_\gamma^* \right)]\ .
\end{equation}
By symmetry, we may assume without loss of generality that $c_\gamma = c_{\gamma'}$ where
as before $(\gamma_1,\gamma_2)' = (\gamma_2,\gamma_1)$ so that $V_\gamma^* = V_{\gamma'}$.
Then the expression simplifies to
\begin{equation*}  
\cL A =   \sum_{\gamma\neq (1,1)} c_{\gamma} [e^{-\omega_\gamma/2}
 V_\gamma^*[A,V_\gamma]  
+ e^{\omega_\gamma/2}[V_\gamma,A]V_\gamma^* ]
\end{equation*}
which is (\ref{genform}). 
Simply using (\ref{alform4}) directly in (\ref{alform1B}) leads to the alternate form
$$\cL A =   \sum_{\gamma\neq (1,1)} c_{\gamma} e^{-\omega_\gamma/2}
\left( V_\gamma^*[A,V_\gamma] +   [V^*_\gamma,A] V_\gamma\right)\ .$$

Again since $U_{\gamma,\alpha} =0 $ unless 
$\omega_\gamma = \omega_\alpha$, (\ref{hform5}) implies that 
for all $\gamma$ and all $t$,
\begin{equation}\label{alform5b}
\sigma^t V_\gamma \sigma^{-t} = e^{-t\omega_\gamma}V_\gamma
\end{equation}
Letting $\cJ = \{ (k,\ell) \, : \, 1\leq k,\ell,\leq n \quad{\rm and}\quad (k,\ell)\neq (1,1)\ \}$, 
we see that under the hypotheses of the theorem, $\cL$ must have the form (\ref{genform}), and 
(\ref{Vcom}) is the differential statement of (\ref{alform5}).
The final step is to absorb the $c_j$'s nto the $V_j$'s:  Since $c_j \geq 0$ for each $j$, we can absorb these by
by making the replacement $V_j \to \sqrt{c_j}V_j$.  This proves that the generator $\cL$ of a QMS satisfying the $\sigma$-DBC has the from specified in Theorem~\ref{strucB}.

For the converse, if $\cL$ has the specified form,  one restores the $c_j$'s by normalizing the $V_j$'s, and then writes 
$\cL$ in its GKS form for this orthonormal basis (after including $\one$ and any $V_j$'s with $c_j= 0$).  The reduced
GKS matrix of $\cL$ is unchanged as the argument starting from (\ref{unitex}) shows. Thus, by Remark~\ref{charrem}, 
$\cL$ generates a completely positive semigroup $\cP_t$, and evidently $\cL\one = 0$, so that $\cP_t$ is a QMS. The 
$\sigma$-DBC is then readily checked (using the fact that the $V_j$ are eigenvectors of $\Delta_\sigma$). 
\end{proof}

\begin{remark}   It is easy to check the existence of an extension of $\cP_t$ from $\cA$ to $\cM_n(\C)$ in many relevant cases; e.g., when $\cA$ is a Clifford algebra with an odd number of generators. One might hope that
there is a general extension using the conditional expectation. 

Recall that for any unital $C^*$-subalgebra $\aA$ of $\cM_n(\C)$, there is the {\em conditional expectation} $E_\aA$ which is the orthogonal projection in $\fH_{\cM_n(\C)}$ onto $\fH_\aA$ \cite{Um54}. This may be written as an average over the unitaries in the commutant of $\aA$ \cite{C10,Uh71}; the connected component of this group $\mathfrak{U}$ that contains the identity is a Lie subgroup of $SU(n)$, on which there exists a normalized Haar measure $\mu$, and then for all $X\in \cM_n(\C)$,
${\displaystyle E_\aA X = \int_{\mathfrak{U}} U^* XU {\rm d}\mu}(U)$.
Evidently, $E_\aA$ is a completely positive map with $E_\aA \one = \one$. 
That is, $E_\aA$ is a quantum Markov operator.

If $\cK$ is a linear transformation on $\aA$, define a linear transformation $\widehat \cK$ on $\cM_n(\C)$ by $\widehat \cK = \cK\circ E_\aA$.
Note that when a linear operator $\cP$ on $\aA$  is completely positive, so is $\widehat\cP$, and the 
restriction of $\widehat\cP$ to $\aA$ is simply $\cP$. 
Moreover, if $\cL_1$ and $\cL_2$ are two linear transformations of $\aA$, then $\widehat\cL_2\widehat \cL_1 = \widehat{\cL_2\cL_1}$.  
In this way we can ``lift'' any QMS $\cP_t$ on $\aA$ up to a one-parameter family of Markov operators $\widehat \cP_t$ on $\cM_n(\C)$ such that for all $s,t\geq 0$, $\widehat{\cP_t} \widehat{\cP_s} = \widehat{\cP_{t+s}}$. 
This construction fails to yield a semigroup only because $\lim_{t\to 0}\widehat \cP_t = E_\aA$
and not $\lim_{t\to 0}\widehat \cP_t = I_\aA$.
However, if $\cP_t = e^{t\cL}$, then ${\displaystyle \lim_{t\to 0} \frac{1}{t} (\widehat \cP_t - E_\aA) = \widehat \cL}$.
The operator $\widehat\cL$ is evidently a self-adjointness preserving linear transformation from 
$\cM_n(\C)$ to $\cM_n(\C)$, and $\widehat \cL \one = 0$.  If  $\cP_t$ satisfies the 
$\sigma$-DBC for $\sigma\in \Dens_+(\aA)$, then for all $A,B\in \cM_n(\C)$, 
$$\tau [\sigma B^* \widehat \cP_t A] = \tau [\sigma B^* E_\aA  \cP_t(E_\aA A)] = 
 \tau [(E_\aA ( B\sigma)^*)   \cP_t(E_\aA A)] =  \tau [\sigma (E_\aA B)^*   \cP_t(E_\aA A)] \ ,$$
 where we have used the fact that since $\sigma\in \aA$, $E_\aA(\sigma B^*) = \sigma E_\aA B^*$.  It follows
 that for each $t\geq 0$,  $\widehat \cP_t$ is self-adjoint  with respect to the $\sigma$-GNS inner product 
 on $\cM_n(\C)$. Consequently, the same is true of  $\widehat \cL$.  
 
Therefore, the proof of Theorem~\ref{strucB} given just above shows that $\widehat \cL$ has the form specified in Theorem~\ref{strucB} except that some $c_j$'s might be negative:  Applicability of Lemma~\ref{PZlem} requires that $\lim_{t\to 0} \cP_t = I_{\cM_n(\C)}$. 
\end{remark}

 \section{Note on KMS-symmetry}  
 
We give a construction of a class of operators $\cK$ that satisfy  
 $(\cK A)^* = \cK A^*$ for all $A$ and that are self-adjoint with respect to the  $\sigma$-KMS inner product $\langle \cdot, \cdot \rangle_{1/2}$ for some $\sigma\in \Dens_+$, but which do not commute with $\Delta_\sigma$, and consequently are not self-adjoint with respect to the $\sigma$-GNS inner product. 
An operator $\cK$ on $\cM_n(\C)$ that is self-adjoint with respect to 
$\langle \cdot,\cdot\rangle_{1/2}$ for some $\sigma\in \Dens_+$ is called {\em KMS-symmetric}.  

When $\cK$ is completely positive with $\cK(\one) = \one$, it has a Kraus representation
\begin{equation*}
\cK(A) = \sum_{j=1}^m K_j^* AK_j  \quad {\rm where}\quad \sum_{j=1}^m K^*_jK_j = \one\ 
\end{equation*}
for some set $\{K_1,\dots,K_m\} \subset \cM_n(\C)$. Evidently, 
$\cK^\dagger A = \sum_{j=1}^m K_j AK_j^*$. Suppose that $\cK^\dagger\sigma = \sigma$
with $\sigma\in \Dens_+$.  The {\em dual set of Kraus operators} 
$\{\widehat K_1,\dots,\widehat K_m\}$ is given by
\begin{equation*}
\widehat K_j = \Delta^{1/2}_\sigma K_j^*  = \sigma^{1/2} K_j^* \sigma^{-1/2} \ .
\end{equation*} 
Then
${\displaystyle  \sum_{j=1}^m \widehat K^*_j \widehat K_j = \one }$   and  
$\displaystyle{  
\sum_{j=1}^m \widehat K_j  \sigma \widehat K_j^* = \sigma}$.  
It follows that the operator $\widehat \cK$ defined by
${\displaystyle \widehat \cK A =  \sum_{j=1}^m \widehat K^*_j 
A\widehat K_j}$ is completely positive with $\widehat \cK(\one)= \one$ and 
${{\widehat{\cK}}}\,\mbox{}^\dagger (\sigma)= \sigma$. 
A simple calculation shows that $\langle \widehat \cK B , A \rangle_{1/2}  = \langle B, \cK A\rangle_{1/2}$ for all $A,B\in \cM_n(\C)$.
Thus, $\widehat \cK$ is the adjoint of $\cK$ with respect to the $\sigma$-KMS inner product, and $\widehat \cK \cK$ is a completely positive operator on $\cM_n(\C)$ such that $\widehat \cK \cK(\one) = \one$ and $(\widehat \cK \cK)^\dagger \sigma = \sigma$. 

Define a quantum Markov semigroup $\cP_t$ by
\begin{equation*}
\cP_t = \sum_{n=0}^\infty e^{-t} \frac{t^n}{n!} (\widehat \cK \cK)^n = e^{t(\widehat \cK \cK - I)}   \ .
\end{equation*} 
Evidently  $\cP_t$ is KMS-symmetric for each $t>0$ since $\widehat \cK \cK$ is KMS-symmetric. Furthermore, $\cP_t$ commutes with $\Delta_\sigma$ for each $t>0$ if and only if $\widehat \cK \cK$ commutes with $\Delta_\sigma$. 
We will show that the latter is not generally the case. 

To construct counterexamples, consider $n=2$; the construction that follows is 
readily generalized. Let $\{u_1,u_2\}$ be an orthonormal basis in $\C^2$ and let  $\{v_1,v_2\}$ be a set of two linearly independent unit vectors in $\C^2$ that are {\em not} orthogonal. 
Define the rank-one operators 
$K_1$ and $K_2$ by $K_j = |v_j\bk u_j|$, $j=1,2$. 
Evidently, $K_1^*K_1 + K_2^*K_2 = \one$,
and we define $\cK A = K_1^*AK_1+K_2^*AK_2$ so that $\cK \one = \one$. 
Then the range of $\cK^\dagger$ is spanned by 
$\big\{|v_1\bk v_1| \ , \, |v_2\bk v_2|\big\}$.  
A simple computation yields
$$\cK^\dagger\big(\alpha_1 |v_1\bk v_1|  + \alpha_2 |v_2\bk v_2| \big)  = 
\beta_1 |v_1\bk v_1|  + \beta_2 |v_2\bk v_2| \ ,$$
where
\begin{equation}\label{gramm}
 \left(\begin{array}{c} \beta_1\\ \beta_2\end{array}\right) = \left[ \begin{array}{cc}
1- a & b\\ a & 1-b\end{array}\right] \left(\begin{array}{c} \alpha_1\\ \alpha_2\end{array}\right)\ ,
\end{equation}
with $a = |\langle v_1,u_2\rangle|^2$ and  $b = |\langle v_2,u_1\rangle|^2$, 
and since $\{v_1,v_2\}$ is not orthogonal, $a+b > 0$. 
The vector $\left(\begin{array}{c} b\\ a\end{array}\right) $ is an eigenvector 
with eigenvalue $1$, and hence
\begin{equation}\label{KMSsig}
\sigma = \frac{b}{a+b} |v_1\bk v_1| + \frac{a}{a+b} |v_2\bk v_2|
\end{equation}
satisfies $\cK^\dagger \sigma = \sigma$, and hence, as we just noted, $(\widehat{\cK} \cK)^\dagger \sigma = \sigma$. 
The other eigenvalue of the matrix in (\ref{gramm}) is $1-a-b < 1$, so that (\ref{KMSsig}) gives the unique invariant state. 
It follows that the eigenvalues of $\cK^\dagger$ are $1$, $1 -a-b$ 
and $0$, with
\begin{equation*}
{\rm Null}(\cK^\dagger) =  
{\rm Span} \big\{|u_1 \bk u_2| \ , \ |u_2\bk u_1| \big\} \ .
\end{equation*}
Consequently, the null space of $\cK$ is $2$-dimensional as well, and same holds for the null space of $\widehat \cK \cK$, since $\widehat \cK$ is the $\sigma$-KMS dual of $\cK$. 
By ergodicity, it is the only eigenspace of $\widehat \cK \cK$ with this property.

Generically, $\sigma$ will have two distinct eigenvalues. (For example, take $\{u_1,u_2\}$ to be the standard basis of $\C^2$, and take $v_1 = 
\frac{1}{\sqrt{2}}\left(\begin{array}{c} 1\\ 1\end{array}\right)$ and 
$v_2 = 
\frac{1}{\sqrt{5}}\left(\begin{array}{c} 1\\ 2\end{array}\right)$, so that 
 $\sigma = \frac{1}{7}\left[\begin{array}{cc} 2 & 3\\ 3 & 5\end{array}\right]$.)
Let $\{\eta_1,\eta_2\}$ be an orthonormal basis of eigenvectors of $\sigma$. 
If $\cP_t$ and $\Delta_\sigma$ were to commute, then Remark \ref{halfdif} would imply that $|\eta_1 \bk \eta_2|$ and $|\eta_2 \bk \eta_1|$ are linearly independent eigenvectors of $\widehat \cK \cK$ with the same eigenvalue. 
This eigenvalue  can only be zero by the above. But then $0 = \cK(|\eta_1 \bk \eta_2|) = \cK(|\eta_2 \bk \eta_1|)$.
Hence we would have
$$0 =\cK (|\eta_2 \bk \eta_1|) = \langle v_1,\eta_2\rangle \langle \eta_1,v_1\rangle |u_1\bk u_1| + 
\langle v_2,\eta_2\rangle \langle \eta_1,v_2\rangle |u_2\bk u_2|\ ,$$
which would mean that
\begin{equation}\label{count}
	\langle v_1,\eta_2\rangle \langle \eta_1,v_1\rangle = 0 \tand 
	\langle v_2,\eta_2\rangle \langle \eta_1,v_2\rangle = 0\ .
\end{equation}
Suppose $\langle v_1,\eta_2\rangle = 0$. 
Then since $\{v_1,v_2\}$ is not orthogonal, and $\{\eta_1,\eta_2\}$ is, $\langle v_2,\eta_1\rangle \neq 0$. 
The second equality in (\ref{count}) then yields $\langle v_2,\eta_2\rangle$, but we cannot have both $\langle v_1,\eta_2\rangle = 0$ and $\langle v_2, \eta_2 \rangle = 0$ since this would imply that $\eta_2 = 0$. 

Under this condition,  the first equality in (\ref{count}) would yield $\langle \eta_1,v_1\rangle = 0$. As above, this would imply
$\langle \eta_2,v_2\rangle \neq 0$, and hence $\langle \eta_1,v_2\rangle = 0$. We cannot have both 
$\langle v_1,\eta_1\rangle = 0$ and $\langle v_2,\eta_1\rangle=0$ since this would imply that $\eta_1 = 0$. 
Hence $\cK (|\eta_2 \bk \eta_1|)= 0$ is impossible. 

Thus, with this choice of $\cK$, $\cL :=\widehat \cK \cK - I$ is the generator of a quantum Markov semigroup with invariant state $\sigma\in \Dens_+$ such that $\cL$ is self-adjoint with respect to the $\sigma$-KMS inner product
$\langle \cdot,\cdot \rangle_{1/2}$, but such that $\cL$ does not commute with $\Delta_\sigma$. 
It follows that $\cL$ is not self-adjoint with respect to the GNS inner product. 

\medskip
\noindent{\bf Acknowledgements} E.C. was partially supported by NSF grant DMS 1501007, and thanks IST Austria for hospitality during a visit in June 2015. 
E.C. thanks the Mittag-Leffler Institute for hopitality during the final work on this paper. Both authors thank the Erwin Schr\"odinger Institute in Vienna for hospitality during a visit in June 2016.


\begin{thebibliography}{30}





\footnotesize{


\bibitem{AFQ16} L.~Accardi, F.~Fagnola, and R.~Quezada, 
\textit{On three new principles in non-equilibrium statistical mechanics and {M}arkov semigroups of weak coupling limit type},
Infin. Dimens. Anal. Quantum Probab. Relat. Top.,
  {\bf 19}(2):1650009, 37 pp., 2016.

\bibitem{Ar73} G.~S.~Agarwal, Z. Physik {\bf 258}, 409-422, 1973.

\bibitem{AHK}S. Albeverio, R. H\o egh-Krohn, \textit{Dirichlet Forms and Markovian semigroups on $C^*$ algebras},
Comm. Math. Phys. {\bf 56}  173-187, 1977

\bibitem{A78} R.~Alicki, \textit{On the detailed balance condition for non-Hamiltonian systems}, Rep. Math. Phys., {\bf 10} 249-258.
1976 

\bibitem{AGS} L.~Ambrosio, N.~Gigli, and G.~Savar\'e, \textit{Gradeint flows in metric spaces and in the space
of probability measures}, 
Lectures in Mathematics ETH Z\"urich, Birkh\"auser Verlag, Basel, 2005.


\bibitem{BB00}J.~D. Benamou and Y.~Brenier, \textit{A computational fluid mechanics solution to the Monge--Kantorovich mass transfer problem}, Numer. Math., {\bf 84} 375-393. 2000 

\bibitem{BD58} A.~Beurling and J.~Deny, \textit{Espaces de Dirichlet I: le cas elementaire}, Acta Math. {\bf 99}, 203-224, 1958.

\bibitem{BD59} A.~Beurling and J.~Deny, \textit{Dirichlet spaces}, Proc. Natl. Acad. Sci. {\bf 45}, 208-215, 1959.  


\bibitem{C10}
E.~A.~Carlen {\it Trace inequalities and quantum entropy: an introductory course}. Entropy and the quantum, 73--140, Contemp. Math., {\bf 529}, Amer. Math. Soc., Providence, RI, 2010.

\bibitem{CL93} E.~A.~Carlen and E.~H.~Lieb, \textit{Optimal hypercontractivity for Fermi fields and related noncommutative
integration inequalities}, Comm. Math. Phys. {\bf 155}, 27-46, 1993.    

\bibitem{CM13} E.~A.~Carlen and J.~Maas, 
	\textit{An analog of the 2-Wasserstein metric in non-commutative probability under which the fermionic Fokker-Planck equation is gradient flow for the entropy}, 
	Comm. Math. Phys. {\bf 331}, 887--926, 2014.

\bibitem{CM16} E.~A.~Carlen and J.~Maas, 
	\textit{On the geometry of dissipative quantum systems with detailed balance}, in preparation.

\bibitem{CW76} H.~J.~Carmichael and D.~F.~ Walls,: Z. Physik {\bf B23}, 299-306, 1976.

\bibitem{Choi} M.~D.~Choi,  \textit{Completely positive linear maps on complex matrices}, Lin. alg. and Appl. {\bf 10}  285-290 1975.

\bibitem{Cip97} F.~Cipriani  \textit{Dirichlet forms and Markovian semigroups on standard forms
of von Neumann algebras}. J. Funct. Anal. {\bf 147}, 259-300, 1997.
 
\bibitem{Cip08} F.~Cipriani, \textit{Dirichlet forms on noncommutative spaces}, in \textit{Quantum 
potential theory}, vol. 1954 of
Lecture Notes in Math., Springer, Berlin, 161-276, 2008.

\bibitem{CFL00} F.~Cipriani, F.~Fagnola, J.~M.~Lindsay, \textit{Spectral analysis and Feller property for quantum
Ornstein--Uhlenbeck semigroups}, Comm. Math. Phys. {\bf 210} 85-105, 2000.

\bibitem{CS03} F.~Cipriani and J.~L.~Sauvageot, \textit{Derivations as square roots of Dirichlet forms}, 
	J. Funct. Anal., {\bf 201} 78-120, 2003

\bibitem{DS08} S.~Daneri and G.~Savare, \textit{Eulerian calculus for the displacement convexity in the Wasserstein
distance}, SIAM J. Math. Anal., {\bf 40}, 1104-1122, 2008.

\bibitem{Da74} E.~B.~Davies, \textit{Markovian Master Equation}, Commun. Math. Phys. {\bf 39}, 91-110, 1974.

 
\bibitem{DL92}  E.~B.~Davies, J.~M.~Lindsay, \textit{Non-commutative symmetric Markov semigroups}, Math. Z. {\bf 210} 379-411, 1992.

\bibitem{EM12} M.~Erbar and J.~Maas, \textit{Ricci curvature of finite Markov chains via convexity of the entropy}, 
Arch. Ration. Mech. Anal., {\bf 206}, 997-1038, 2012.


\bibitem{EMT15}
	M. Erbar, J. Maas and P. Tetali.
	\textit{Discrete Ricci curvature bounds for Bernoulli--Laplace and random transposition models},
	Ann. Fac. Sci. Toulouse Math {\bf 24}, 781-800, 2015.

\bibitem{FR15} F.~Fagnola and R.~Reboledo, \textit{Entropy production and detailed balance for a class of quantum Markov semigroups}, Open Syst. Inf. Dyn. {\bf 22}, 1550013 (2015).

\bibitem{FU07} F.~Fagnola and V.~Umanit\`a, \textit{Generators of Detailed Balance Quantum Markov Semigroups}, Infin. Dimens. Anal. Quantum Probab. Relat. Top.,
{\bf 10}, no. 3, 335-363, 2007.

\bibitem{FU10} F.~Fagnola and V.~Umanit\`a, \textit{Generators of KMS Symmetric Markov Semigroups on B(h): Symmetry and Quantum Detailed Balance}. 
	Commun.	Math. Phys. {\bf 298},  523-547, 2010.

\bibitem{FM16} 
	M. Fathi and J. Maas.
	\textit{Entropic Ricci curvature bounds for discrete interacting systems}, 
	{Ann. Appl. Probab.}, {\bf 26}, 1774-1806, 2016.

\bibitem{FV82}
A.~Frigerio and M.~Verri.
\newblock Long-time asymptotic properties of dynamical semigroups on {$W^{\ast}
  $}-algebras.
\newblock {\em Math. Z.}, 180(2):275--286, 1982.

\bibitem{GL93} S.~Goldstein and  J.~M.~Lindsay, \textit{Beurling-Deny conditions for KMS-symmetric dynamical
semigroups}, C.~R. ~Acad. Sci.~Paris Ser. I {\bf 317},1053-1057., 1993.

\bibitem{GKS76} V.~Gorini, A.~Kossakowski and E.~C.~G.~Sudarshan, \textit{Completely positive 
dynamical semigroups of $N$-level systems},
J. Math. Phys. {\bf 17}, 821-825,1976.

\bibitem{Gr72} L.~Gross, \textit{Existence and uniqueness of physical ground states}, J. Funct. Anal. {\bf 10}  59-109, 1972.

\bibitem{Gr75} L.~Gross,  \textit{Hypercontractivity and logarithmic Sobolev inequalities for the Clifford-Dirichlet
form}, Duke Math. J. {\bf 42}  383-396, 1975.

%

\bibitem{HK99} F.~Hiai and H.~Kosaki, \textit{ Means for matrices and comparison of their norms}, Indiana Univ. Math. J.,
{\bf 48}, 899-936, 1999.

\bibitem{HKPR12} F.~Hiai, H.~Kosaki, D~Petz, and M.~B.~Ruskai, 
	\textit{ Families of completely positive maps associated with monotone metrics}, Lin. Alg. Appl. {\bf 439}, 1749--1791, 2013.

\bibitem{HP12} F.~Hiai,  D~Petz, \textit{Convexity of quasi-entropy type functions: Lieb's and Ando's convexity theorems
revisited}, J. Math. Phys. {\bf 54}, 062201, 2013.

\bibitem{HP12B} F.~Hiai,  D~Petz, \textit{From quasi-entropy to various quantum information quantities}, 
Publ. Res. Inst. Math. Sci. {\bf 48}, 525-542, 2012.

\bibitem{HKV16} S.~Huber, R.~K\"onig, and A.~Vershynina. \textit{Geometric inequalities from phase space translations}, arXiv:1606.08603, 2016.

\bibitem{JuZe15} M.~Junge and Z.~Zeng,
\textit{Noncommutative martingale deviation and Poincar\'e type inequalities with applications},
Probab. Theory Related Fields {\bf 161},  no. 3-4,  pp. 449--507, 2015.


\bibitem{LeRu} A.~Lesniewski and M.~B.~Ruskai, \textit{Monotone Riemannian metrics and relative entropy on noncommutative
probability spaces}, J. Math. Phys. {\bf 40}, 5702-5724, 1999.

\bibitem{L73} E.~H.~Lieb \textit{Convex trace functions and the Wigner-Yanase-Dyson conjecture},  Adv. Math. {\bf 11}, 267-288, 1973.


\bibitem{KFGV}
A.~Kossakowski, A.~Frigerio, V.~Gorini, and M.~Verri, \textit{Quantum detailed balance and KMS
condition}, Comm. Math. Phys., {\bf 57}, 97-110, 1977.

\bibitem{JZ} M.~Junge and Q.~Zeng, \textit{Noncommutative martingale deviation and Poincarn\'e type inequalities with
applications}, Probability Theory and Related Fields
{\bf  161}  449-507, 2015

\bibitem{Le92} M.~Ledoux, \textit{On an integral criterion for hypercontractivity of diffusion semigroups and extremal functions}, J. Func. Anal. {\bf 105}, 445-467, 1992. 


\bibitem{LiMi13} M~Liero and A.~Mielke, 
\textit{Gradient structures and geodesic convexity for reaction-diffusion
 systems},  Philos. Trans. R. Soc. Lond. Ser. A {\bf 371} 20120346, 2013.


\bibitem{Lin76} G.~Lindblad, \textit{On the generators of quantum dynamical semigroups}, Comm. Math. Phys., 
{\bf 48}, 119-130, 1976.

\bibitem{Ma11} J.~Maas, \textit{Gradient flows of the entropy for finite Markov chains}, J. Funct. Anal., {\bf 261}, 2250-2292, 2011.

\bibitem{MS98} W.~A.~Majewski and R.~F.~Streater, \textit{Detailed balance and quantum dynamical
maps}, J. Phys. A: Math. Gen. {\bf 31}, (1998) 7981-7995.

\bibitem{Mi13}
	A. Mielke.
	\textit{Geodesic convexity of the relative entropy in reversible Markov chains}, 
	{Calc. Var. Part. Diff. Equ.}, 48(1): 1--31, 2013. 


\bibitem{Mie13QM} A.~Mielke,  
\textit{Dissipative quantum mechanics using {GENERIC}}. 
Recent trends in dynamical systems, Springer Proc. Math. Stat., {\bf 35}, 555--585.

\bibitem{Mie15QM} A.~Mielke,  
\textit{On thermodynamical couplings of quantum mechanics and macroscopic systems}.
Mathematical results in quantum mechanics, 331--348, World Sci. Publ., Hackensack, NJ, 2015.

\bibitem{MoCh} E~A.~Morozova and N.~N.~Chentsov, \textit{Markov invariant geometry on the state manifolds} (Russian),
Itogi Nauki i Tekhniki {\bf 36}, 69-102, 1990.

\bibitem{Oet} H.~C.~{\"O}ttinger. 
\textit{Beyond Equilibrium Thermodynamics.} 
John Wiley, New Jersey, 2005.

\bibitem{Oet10} H.~C.~{\"O}ttinger.  
\textit{The nonlinear thermodynamic quantum master equation.} Phys. Rev. A, 82, 052119(11), 2010.

\bibitem{O01} F. Otto, \textit{The geometry of dissipative evolution equations: the porous medium equation}, 
Comm. Partial Diferential Equations, {\bf 26}, 101-174, 2001.

\bibitem{OV00} F.~Otto and C.~Villani:
\textit{Generalization of an inequality by {T}alagrand and links with the
  logarithmic {S}obolev inequality}, J. Funct. Anal. {\bf 173}, 2000, pp. 361--400


\bibitem{OW05} F.~Otto and M.~Westdickenberg, \textit{ Eulerian calculus for the contraction in the Wasserstein
distance}, SIAM J. Math. Anal., {\bf 37}, 1227-1255, 2005.  

\bibitem{PZ} G.~Parravinci and A.~Zecca, \textit{On the generator of completely positive dynamical semigroups of $N$-level systems}, Rep. Math. Phys., {\bf 12}, 423-424, 1972

\bibitem{Paul02}
{V.~Paulsen}, {\em Completely bounded maps and operator algebras}, vol.~78
  of Cambridge Studies in Advanced Mathematics, Cambridge University Press,
  Cambridge, 2002.


\bibitem{PWPR06} D.~Perez-Gacia, M.~M.~Wolf, D.~Petz, and M.~B.~Ruskai, \textit{Contractivity of positive and trace-
preserving maps under $L^p$ norms}, Journal of mathematical physics, {\bf 47}, 083506, 2006.

\bibitem{Pe84} D.~Petz, \textit{A dual in von Neumann algebras}. Quart. J. Math. Oxford {\bf 35}, 475-483, 1984.

\bibitem{Pe96} D.~Petz, \textit{Monotone metrics on matrix spaces}, Linear Algebr. Appl. {\bf 244}, 81-96,  1996.

\bibitem{RD} C. Rouze and N. Datta, Concentration of quantum states from quantum functional and Talagrand inequalities, arXiv preprint 1704.02400.


\bibitem{Saka71}
{S.~Sakai}, {\em {$C\sp*$}-algebras and {$W\sp*$}-algebras},
  Springer-Verlag, New York-Heidelberg, 1971.
\newblock Ergebnisse der Mathematik und ihrer Grenzgebiete, Band 60.

\bibitem{S53}  I.E.~Segal: \textit{A non-commutative extension of abstract
integration}, Annals of Math., {\bf 57}, 401--457, 1953.

\bibitem{S56}   I.E.~Segal.: \textit{Tensor algebras over Hilbert spaces II}, 
Annals of Math., {\bf 63}, 160-175, 1956.

\bibitem{S65}   I.E.~Segal.: \textit{Algebraic integration theory}, 
Bull. Am. Math. Soc.., {\bf 71},  no. 3,  pp. 419-489, 1965.


\bibitem{SpLe77} H.~Spohn and J.~L.~Lebowitz,  \textit{Stationary non-equilibrium states of infinite harmonic systems},
Comm. Math. Phys., {\bf 54}, 97-120, 1977. 

\bibitem{Tal96}   M.~Talagrand: \textit{Transportation cost for {G}aussian and other product measures}, 
Geom. Funct. Anal. {\bf 6}, 1996, pp. 587--600

\bibitem{TKRWV} K.~Temme, M.~J.~Kastoryano, M.~B.~Ruskai, M.~M.~Wolf, and F.~Verstraete, 
\textit{The $\chi^2$ divergence and mixing time of quantum Markov processes}, J. Math. Phys. {\bf 51}, 122201 (2010).

\bibitem{Uh71} A.~Uhlmann: \textit{S\"atze \"uber Dichtematrizen},
Wiss. Z. Karl-Marx Univ. Leipzig {\bf 20}, 633-653, 1971

\bibitem{Um54} H.~Umegaki: \textit{Conditional expectation in operator algebras I},
Tohoku Math. J., {\bf 6}, 1954, pp.  177-181 
  
  
\bibitem{V} C.~Villani, \textit{Optimal transport, old and new}, Springer Verlag, Berlin, 2008.

}



\end{thebibliography}
 \end{document}